\renewcommand*{\backrefalt}[4]{%
    \ifcase #1 \footnotesize{(Not cited.)}%
    \or        \footnotesize{(Cited on page~#2.)}%
    \else      \footnotesize{(Cited on pages~#2.)}%
    \fi}
\newtheorem{theorem}{Theorem}[section]
\newtheorem{corollary}[theorem]{Corollary}
\newtheorem{lemma}[theorem]{Lemma}
\newtheorem{assumption}[theorem]{Assumption}
\def\beq{\begin{equation} }
\def\eeq{\end{equation} }
\newcommand{\reals}{\mathbb{R}}
\newcommand{\Xtrap}[1]{^{\textnormal{md}}_{#1}}
\newcommand{\avg}[1]{^{\textnormal{ag}}_{#1}}
\newcommand{\strcvx}{\mu_{\operatorname{Str}}}
\newcommand{\stepsize}{\eta}
\def\oholder{\boldsymbol{\omega}}
\newcommand{\smoothFG}{L_{\operatorname{Str}}}
\newcommand{\smoothBLin}{L_{\operatorname{Bil}}}
\def\ratio{\mathcal{R}}
\def\Cholder{\mathscr{C}}
\def\epoch{s}
\def\Sholder{\mathscr{S}}
\def\Tholder{\mathscr{T}}
\newcommand{\Exs}{\ensuremath{{\mathbb{E}}}}
\def\stdFG{\sigma_{\operatorname{Str}}}
\def\stdBLin{\sigma_{\operatorname{Bil}}}
\def\quantile{r}
\def\distancemetric{\mathcal{S}}
\def\thetaholder{\boldsymbol{\theta}}
\def\foneholder{\boldsymbol{\varphi}_1}
\def\ftwoholder{\boldsymbol{\varphi}_2}
\def\boneholder{\boldsymbol{\delta}_1}
\def\btwoholder{\boldsymbol{\delta}_2}
\def\arbholder{\mathbf{z}}
\newcommand{\Vquantity}{V}
\newcommand{\noiseFG}{\boldsymbol{\Delta}_{\operatorname{Str}}}
\newcommand{\noiseBLin}{\boldsymbol{\Delta}_{\operatorname{Bil}}}
\def\calT{\mathcal{T}}
\def\blue#1{\textcolor{black}{#1}}
\def\red#1{}\def\pb{}
\begin{document}


\begin{center}

{\bf{\LARGE{Optimal Extragradient-Based Stochastic Bilinearly-Coupled Saddle-Point Optimization}}}

\vspace*{.2in}
{\large{
\begin{tabular}{cccc}
Simon S. Du$^\star$
\quad
Gauthier Gidel$^\dagger$
\quad
Michael I.~Jordan$^{\diamond, \ddagger}$
\quad
Chris Junchi Li$^\diamond$
\\
\end{tabular}
}}

\vspace*{.2in}

\begin{tabular}{c}
Paul G.~Allen School of Computer Science and Engineering, University of Washington$^\star$
\\
DIRO, Université de Montréal and Mila$^\dagger$
\\
Department of Electrical Engineering and Computer Sciences, UC Berkeley$^\diamond$
\\
Department of Statistics, UC Berkeley$^\ddagger$
\end{tabular}

\vspace*{.2in}

\today

\vspace*{.2in}

\let\thefootnote\relax\footnotetext{Authorship in alphabetical orders.}

\begin{abstract} 
We consider the smooth convex-concave bilinearly-coupled saddle-point problem, $\min_{\mathbf{x}}\max_{\mathbf{y}}~F(\mathbf{x}) + H(\mathbf{x},\mathbf{y}) - G(\mathbf{y})$, where one has access to stochastic first-order oracles for $F$, $G$ as well as the bilinear coupling function $H$. Building upon standard stochastic extragradient analysis for variational inequalities, we present a stochastic \emph{accelerated gradient-extragradient (AG-EG)} descent-ascent algorithm that combines extragradient and Nesterov's acceleration in general stochastic settings.  This algorithm leverages scheduled restarting to admit a fine-grained nonasymptotic convergence rate that matches known lower bounds by both \citet{ibrahim2020linear} and \citet{zhang2021lower} in their corresponding settings, plus an additional statistical error term for bounded stochastic noise that is optimal up to a constant prefactor. This is the first result that achieves such a relatively mature characterization of optimality in saddle-point optimization.
\end{abstract}
\end{center}

\paragraph{Keywords: }%
Convex optimization, convex-concave bilinearly-coupled saddle-point problem, extragradient-based stochastic optimization, Nesterov's acceleration, scheduled restarting, scaling reduction

\pb\section{Introduction}
In this work, we focus on a widely studied stochastic convex-concave minimax optimization problem with bilinear coupling, also known as convex-concave bilinearly-coupled saddle-point problem:
\beq\label{problemopt_stochastic}
\min_{\mathbf{x}\in\reals^n}\max_{\mathbf{y}\in\reals^m}~
\mathscr{F}(\mathbf{x},\mathbf{y})
	=
\Exs_\xi\left[f(\mathbf{x}; \xi)\right]
+
\Exs_\zeta\left[h(\mathbf{x}, \mathbf{y}; \zeta)\right]
-
\Exs_\xi\left[g(\mathbf{y}; \xi)\right]
	\equiv
F(\mathbf{x})
+
H(\mathbf{x},\mathbf{y})
-
G(\mathbf{y}),
\eeq
where $H(\mathbf{x},\mathbf{y})
\equiv
\mathbf{x}^\top \mathbf{B}\mathbf{y}
-
\mathbf{x}^\top \mathbf{u}_{\mathbf{x}} 
+
\mathbf{u}_{\mathbf{y}}^\top \mathbf{y}
$ is the bilinear coupling function with the coupling matrix $\mathbf{B}$ of dimension $n\times m$, and where $\xi$ and $\zeta$ are drawn from distributions $\mathcal{D}_\xi$ and $\mathcal{D}_\zeta$, respectively.
We aim to solve \eqref{problemopt_stochastic} when either both $F(\mathbf{x})$ and $G(\mathbf{y})$ are smooth and strongly convex, or both are zero.
In addition to a wide range of applications in economics, problems of form~\eqref{problemopt_stochastic} are becoming increasingly important in machine learning.
For instance~\eqref{problemopt_stochastic} appears in reinforcement learning, differentiable games, regularized empirical risk minimization, and robust optimization formulations. It also can be seen as a local approximation of the objective of nonconvex-nonconcave minimax games---e.g., a GAN---around a local Nash equilibrium~\citep{mescheder2017numerics,nagarajan2017gradient}.
Our exposition begins with an overview of some of these applications.

\paragraph{Reinforcement learning.}
Reinforcement learning problems can be formalized as Markov Decision Processes (MDPs) where, at each step $t=1,\dots,n$, the learner receives a four-element tuple, $\{s_t, a_t, r_t, s_{t + 1}\}$, where $(s_t, a_t)$ is the current state-action pair, $r_t$ is the reward received upon choosing $a_t$, and $s_{t+1}$ is the next state drawn from a transition distribution.
For example, policy evaluation with a linear function approximator can be formalized in terms of the minimization of the \emph{mean squared projected Bellman-Error} (MSPBE)~\citep{du2017stochastic} based on a set of tuples:
\begin{align}
\min_{\boldsymbol{\theta}}~
\frac12\left\|\mathbf{A}\boldsymbol{\theta} - \mathbf{b}\right\|^2_{\mathbf{C}^{-1}}
+
\frac{\rho}{2}\left\|\boldsymbol{\theta}\right\|^2
,
\label{eq:MSPBE}
\end{align}
\looseness=-1
where $
\mathbf{A}=\frac{1}{n}\sum_{t=1}^n \boldsymbol{\phi}(s_t)(\boldsymbol{\phi}(s_t) - \gamma \boldsymbol{\phi}(s_{t+1}))^\top,\, 
\mathbf{b}	=\frac{1}{n}\sum_{t=1}^n r_t \boldsymbol{\phi}(s_t)
$, and $
\mathbf{C}	=	\frac{1}{n}\sum_{t=1}^n \boldsymbol{\phi}(s_t)\boldsymbol{\phi}(s_t)^\top
$ for a given feature mapping $\phi$.
To reduce the computational cost incurred by calculating the inverse of matrix $\mathbf{C}$,~\citet{du2017stochastic} propose an alternative min-max form of \eqref{eq:MSPBE}:
\begin{align*}
\min_{\boldsymbol{\theta}}\max_{\mathbf{w}}~
\frac{\rho}{2}\|\boldsymbol{\theta}\|^2 - \mathbf{w}^\top \mathbf{A}\boldsymbol{\theta} - \frac{1}{2}\|\mathbf{w}\|_{\mathbf{C}}^2 + \mathbf{w}^\top \mathbf{b}
,
\end{align*}
which falls under the umbrella of problem~\eqref{problemopt_stochastic} whenever $\mathbf{C}$ is positive definite.

\paragraph{Quadratic games.}
Another class of examples arises in the setting of bilinear games, where the minimax objective is:
\beq\label{quad_game}
\mathscr{F}(\mathbf{x},\mathbf{y})
=
\frac12 \mathbf{x}^\top \mathbf{M}_F \mathbf{x} 
+ 
\mathbf{x}^\top \mathbf{B}\mathbf{y}
- 
\frac12 \mathbf{y}^\top \mathbf{M}_G \mathbf{y} - \mathbf{x}^\top \mathbf{v}_{\mathbf{x}}	+	\mathbf{v}_{\mathbf{y}}^\top \mathbf{y}
,
\eeq
where $\mathbf{M}_F, \mathbf{M}_G$ are real-valued matrices of dimensions $n\times n$ and $m\times m$.
This has the form~\eqref{problemopt_stochastic} with $
F(\mathbf{x})	=	\frac12 \mathbf{x}^\top \mathbf{M}_F \mathbf{y}	-	\mathbf{x}^\top \mathbf{v}_{\mathbf{x}}
$, $
G(\mathbf{y})	=	\frac12 \mathbf{y}^\top \mathbf{M}_G \mathbf{y}	-	\mathbf{v}_{\mathbf{y}}^\top \mathbf{y}
$ and $
H(\mathbf{x},\mathbf{y})
\equiv
\mathbf{x}^\top \mathbf{B}\mathbf{y}
$.
A particular case we will be considering in \S\ref{sec_AG_EG_BLin} is the case of bilinear games, i.e.~where there are no quadratic terms.
We provide a detailed analysis of the nonasymptotic convergence in this setting in \S\ref{sec_AG_EG_BLin} and show that the upper bound on the convergence rate given by our algorithm matches the lower bound of~\citet[Theorem 3]{ibrahim2020linear}.

\looseness=-1
\paragraph{Regularized empirical risk minimization.}
The problem of the minimization of the regularized empirical risk for convex losses and linear predictors is a core problem in classical supervised learning:
$$
\min_{\mathbf{x}\in \reals^d}~\mathcal{L}(\mathbf{A}\mathbf{x}) + F(\mathbf{x})
\equiv
\frac{1}{n} \sum_{i=1}^n \mathcal{L}_i(\mathbf{a}_i^\top \mathbf{x}) + F(\mathbf{x})
,
$$
where $\mathbf{A} = \left[\mathbf{a}_1,\ldots, \mathbf{a}_n\right]^\top\in\reals^{n\times d}$ consists of feature vectors $\{\mathbf{a}_i\}$, $\mathcal{L}_i(\mathbf{y})$ is a univariate convex loss for the $i$th data point, and $F(\mathbf{x})$ is a convex regularizer.
A standard construction turns this empirical risk minimization problem into a saddle-point problem as follows:
$$
\min_{\mathbf{x} \in \reals^d}\max_{\mathbf{y} \in \reals^m}~
F(\mathbf{x}) 
+ 
\mathbf{x}^\top\mathbf{A} \mathbf{y} 
- 
\underbrace{
\mathcal{L}^\star(\mathbf{y})
}_{\text{Legendre dual function of $\mathcal{L}(\mathbf{y})$}}
\equiv
F(\mathbf{x}) 
+ 
\frac{1}{n} \sum_{i=1}^n \mathbf{x}_i \mathbf{y}^\top \mathbf{a}_i 
- 
\frac{1}{n} \sum_{i=1}^n \mathcal{L}^\star(\mathbf{y}_i)
.
$$
See \citet{zhang2017stochastic,wang2017exploiting,xiao2019dscovr} for in-depth studies of solving this problem under such a dual form of representation.

\pb\subsection{Main contributions}
Despite the range of real-world applications of the bilinearly-coupled saddle-point problem in~\eqref{problemopt_stochastic}, there is a limited nonasymptotic theoretical analysis of the problem.
Notable exceptions include \citet{zhang2021complexity} and \citet{ibrahim2020linear}, who develop lower bounds in the strongly-convex-strongly-concave and bilinear settings.
\emph{Matching these lower bounds in a single algorithm in the general stochastic setting has been an open problem.}
In particular, standard acceleration techniques do not achieve the optimal nonasymptotic convergence rate for the bilinear minimax game~\citep{gidel2019negative}.

We tackle this problem in a new way, proposing a stochastic \emph{accelerated gradient-extragradient (AG-EG)} descent-ascent algorithm for solving \eqref{problemopt_stochastic}, bringing together Nesterov’s acceleration method \citep{nesterov1983method}---applied to the individual $F(\mathbf{x})$ and $G(\mathbf{y})$ terms---and the extragradient method \citep{korpelevich1976extragradient}---which extrapolates the bilinear coupling term.
This combination allows us to arrive at a general algorithmic convergence result that yields optimality in nonasymptotic convergence rates for the strongly-convex-strongly-concave and bilinear settings.
This general result subsumes many special cases of interest:

\begin{itemize}[leftmargin=5mm]
\item
For the function class of bilinear games where $\nabla f(\mathbf{x}; \xi) = \mathbf{0}$ and $\nabla g(\mathbf{y}; \xi) = \mathbf{0}$ a.s., Algorithm \ref{algo_AG_EG}, equipped with scheduled restarting achieves an $O\bigg(
\sqrt{
\tfrac{\lambda_{\max}(\mathbf{B}^\top \mathbf{B})}{\lambda_{\min}(\mathbf{B}\mathbf{B}^\top)}
} \log\left(
\tfrac{
\sqrt[4]{\lambda_{\min}(\mathbf{B}\mathbf{B}^\top)\lambda_{\max}(\mathbf{B}^\top \mathbf{B})}
}{\stdBLin}
\right)
+
\tfrac{\stdBLin^2}{\lambda_{\min}(\mathbf{B}\mathbf{B}^\top)\varepsilon^2}
\bigg)$ iteration complexity, where $\stdBLin$ is the variance of the stochastic gradient (bilinear coupling term).
When there is no randomness, this complexity result reduces to $
\mathcal{O}\left(
\sqrt{\frac{\lambda_{\max}(\mathbf{B}^\top \mathbf{B})}{\lambda_{\min}(\mathbf{B}\mathbf{B}^\top)}} \log\left(\frac{1}{\varepsilon}\right)
\right)
$ for the bilinear problem, matching the lower bound of \citet{ibrahim2020linear}.%
\footnote{As will be discussed in Assumption \ref{assu_coupling} of \S\ref{sec_settings_assumptions}, we can assume our coupling matrix $\mathbf{B}$ is tall in the sense that $n\ge m$ without loss of generality. For the function class of bilinear games, we assume that $n=m$ where $\mathbf{B}$ is a nonsingular square matrix, so the complexity to make sense. See \S\ref{sec_AG_EG_BLin} for more on this.}
In other words, our algorithm admits a sharp dependency on $\lambda_{\min}(\mathbf{B}\mathbf{B}^\top)$ and matches the \citet{ibrahim2020linear} lower bound [\S\ref{sec_AG_EG_BLin}, Corollary \ref{theo_S_AG_EG_Bilinear_restart}].

\item
For the function class of strongly-convex-strongly-concave objectives, the same stochastic AG-EG descent-ascent Algorithm \ref{algo_AG_EG}, when equipped with scheduled restarting, achieves an iteration complexity of $O\left(
\left(
\sqrt{\tfrac{L_F}{\mu_F}\lor \tfrac{L_G}{\mu_G}} 
+ 
\sqrt{\tfrac{\lambda_{\max}(\mathbf{B}^\top \mathbf{B})}{\mu_F\mu_G}}
\right)\log\left(\frac{1}{\varepsilon}\right)
+
\tfrac{\sigma^2}{\mu_F^2\varepsilon^2}
\right)$, where $F: \reals^n \rightarrow \reals$ is $L_F$-smooth and $\mu_F$-strongly convex, $G: \reals^m \rightarrow \reals$ is $L_G$-smooth and $\mu_G$-strongly convex, and $\sigma$ is a uniformly weighted variance of the stochastic gradient.
When the problem is nonrandom, this complexity upper bound matches the \citet{zhang2021complexity} lower bound [\S\ref{sec_AG_EG}, Corollary \ref{theo_S_AG_EG_restart}].

\item
We also present a direct approach for the function class of strongly-convex-strongly-concave objectives, where the lower bound in iteration complexity due to \citet{zhang2021complexity} is matched as $\left(
\sqrt{\tfrac{L_F}{\mu_F} \lor \tfrac{L_G}{\mu_G}} 
+ 
\sqrt{\tfrac{\lambda_{\max}(\mathbf{B}^\top \mathbf{B})}{\mu_F\mu_G}}
+
O\left(
\tfrac{\sigma^2}{\mu_F^2\varepsilon^2}
\right)\right) \log\left((\tfrac{L_F}{\mu_F} \lor \tfrac{L_G}{\mu_G})\tfrac{1}{\varepsilon}\right)
$, an iteration complexity that admits a near-unity sharp coefficient [\S\ref{sec_AG_EG_Str}, Theorem \ref{theo_AG_EG_Str}].
\end{itemize}

Throughout our analysis, we frequently make use of a \emph{scheduled-restarting} approach and a \emph{scaling-reduction} argument that allows us to reduce problems to cases that are relatively easier to analyze.  This general strategy may be of independent interest.

\pb\subsection{Related work}\label{sec_comp}
Here we compare our results with related work on saddle-point (minimax) optimization in machine learning and optimization literature.

\paragraph{Bilinear game case, nonstochastic setting.}
In the bilinear game case where $L_F = \mu_F = L_G = \mu_G = 0$, a lower bound has been established by \cite{ibrahim2020linear}: $
\Omega\bigg(
\sqrt{\frac{\lambda_{\max}(\mathbf{B}^\top \mathbf{B})}{\lambda_{\min}(\mathbf{B}\mathbf{B}^\top)}} \log\left(\frac{1}{\varepsilon}\right)
\bigg)
$.
The study of bilinear example has been initiated by \citet{daskalakis2018training} for understanding saddle-point optimization. They proposed the gradient descent-ascent (OGDA) algorithm and achieved sublinear convergence.
Subsequently, the classical methods of ExtraGradient (EG) and Optimistic Gradient Descent Ascent (OGDA) algorithms were proven to have linear convergence rate for strongly monotone and Lipschitz operator with $O\bigg(
\frac{\lambda_{\max}(\mathbf{B}^\top \mathbf{B})}{\lambda_{\min}(\mathbf{B}\mathbf{B}^\top)}\log(\frac{1}{\varepsilon})
\bigg)$ iteration complexity~\citep{gidel2019negative,mokhtari2020unified}. 
\citet{azizian2020tight} proved in another study that by considering first order methods using a fixed number of composed gradient evaluations and only the last iteration (this class of methods is called 1-SCLI and excludes momentum and restarting), the $O\bigg(
\frac{\lambda_{\max}(\mathbf{B}^\top \mathbf{B})}{\lambda_{\min}(\mathbf{B}\mathbf{B}^\top)}\log(\frac{1}{\varepsilon})
\bigg)$ iteration complexity for EG is optimal.
In the absence of strong monotonicity assumptions, \citet{loizou2020stochastic} generated the first set of global non-asymptotic last-iterate convergence guarantees for a stochastic game over a non-compact domain using a Hamiltonian viewpoint.
In particular, the proposed stochastic Hamiltonian gradient method ensures convergence in the finite-sum stochastic bilinear game as well.
In very recent work, when restricted to the bilinear minimax optimization, \citet{kovalev2021accelerated} derive an iteration complexity that is essentially $
\mathcal{O}\bigg(\frac{\lambda_{\max}(\mathbf{B}^\top \mathbf{B})}{\lambda_{\min}(\mathbf{B}\mathbf{B}^\top)} \log(\frac{1}{\varepsilon})\bigg)
$.  This is comparable to the rates in \citet{daskalakis2018training,liang2019interaction,gidel2019negative,mokhtari2020unified,mishchenko2020revisiting}.
For matching the $\mathcal{O}\bigg(
\sqrt{\frac{\lambda_{\max}(\mathbf{B}^\top \mathbf{B})}{\lambda_{\min}(\mathbf{B}\mathbf{B}^\top)}} \log(\frac{1}{\varepsilon})
\bigg)$ lower bound provided by \citet{ibrahim2020linear}, the work of~\citet{azizian2020accelerating} considered EG with momentum. They used a perturbed spectral analysis encompassing Polyak momentum.
Nonetheless,~\citet{azizian2020accelerating} only provide accelerated rates in the regime where the condition number is large. 
\citet{li2021convergence} was the first to show that a version of stochastic extragradient method converges at an accelerated convergence rates for bilinear games with unbounded domain and unbounded stochastic noise using restarted iteration averaging, and when focusing on the nonstochastic setting, matches the lower bound of~\citep{ibrahim2020linear}.

\paragraph{Smooth strongly convex-concave case, nonstochastic setting.}
A lower bound for smooth strongly convex-concave minimax optimization has been recently established by \cite{zhang2021lower}.
This lower bound is of the form $
\Omega\bigg(\bigg(
\sqrt{\frac{L_F}{\mu_F}\lor\frac{L_G}{\mu_G}}
+
\sqrt{\frac{\lambda_{\max}(\mathbf{B}^\top \mathbf{B})}{\mu_F\mu_G}}
\bigg) \log\left(\frac{1}{\varepsilon}\right)
\bigg)
$.
As for upper bounds, earlier extragradient-based methods \citet{tseng1995linear} and accelerated dual extrapolation algorithm \cite{nesterov2011solving} achieve, when translated to our bilinearly-coupled problem, an iteration complexity of $
\tilde{\mathcal{O}}\bigg(
\frac{L_F}{\mu_F}\lor\frac{L_G}{\mu_G}
+
\sqrt{\frac{\lambda_{\max}(\mathbf{B}^\top \mathbf{B})}{\mu_F\mu_G}}
\bigg)
$.
The same complexity has also been matched by \citet{gidel2019variational}, \citet{mokhtari2020unified}, \citet{cohen2021relative} from a relative Lipschitz viewpoint.%
\footnote{\citet{mokhtari2020unified} report a $
\tilde{\mathcal{O}}\bigg(
\frac{L_F\lor L_G + \sqrt{\lambda_{\max}(\mathbf{B}^\top \mathbf{B})}}{\mu_F\land\mu_G}
\bigg)
$ complexity, but the mentioned complexity can be obtained via a scaling-reduction argument: consider $\mu_F = \mu_G$ case first, then consider the general case by rescaling the $y$ variable by a factor of $\sqrt{\frac{\mu_G}{\mu_F}}$.}
Improving upon this result, \citet{lin2020near} achieve a complexity of
$
\tilde{\mathcal{O}}\bigg(
\sqrt{
\frac{L_FL_G}{\mu_F\mu_G}
}
+
\sqrt{
\frac{\lambda_{\max}(\mathbf{B}^\top \mathbf{B})}{\mu_F \mu_G}
}
\bigg)
$ using proper acceleration methods, when restricted to the bilinearly-coupled problem.
\cite{wang2020improved} achieves%
\footnote{Note the cross term here cannot be absorbed into the summation of the remaining terms.}
$
\tilde{\mathcal{O}}\bigg(
\sqrt{\frac{L_F}{\mu_F}\lor\frac{L_G}{\mu_G}}
+
\sqrt{
\frac{
\sqrt{\lambda_{\max}(\mathbf{B}^\top \mathbf{B}) L_FL_G}	+	\lambda_{\max}(\mathbf{B}^\top \mathbf{B}) 
}{\mu_F \mu_G}
}
\bigg)
$
and a Hermitian-skew-based analysis nearly matches \citet{zhang2021lower} for the quadratic minimax game case. For the same problem, \cite{xie2021dippa} achieves a complexity of
$
\tilde{\mathcal{O}}\bigg(
\sqrt[4]{\frac{L_FL_G}{\mu_F\mu_G}\left(\frac{L_F}{\mu_F}\lor\frac{L_G}{\mu_G}\right)}
+
\sqrt{\frac{\lambda_{\max}(\mathbf{B}^\top \mathbf{B})}{\mu_F \mu_G}}
\bigg)
$.
These works improve upon \citet{lin2020near} in a fine-grained fashion.
In early 2022, three concurrent works \citet{kovalev2021accelerated,thekumparampil2022lifted,jin2022sharper} studies the nonstochastic problem and independently match the lower bound by \citet{zhang2021lower}.
The main novelty of this work is that both lower bounds \citet{ibrahim2020linear} and \citet{zhang2021lower} are achieved in one single algorithm, plus an optimal statistical error term up to a constant prefactor in the stochastic setting.

\begin{table}[!tb]
\centering
\begin{tabular}{lc}
\toprule
\textbf{References}
&
\textbf{Iteration Complexity}
\\ \hhline{==}
\citet{mokhtari2020unified,cohen2021relative}
&
$
\frac{L_F}{\mu_F}\lor\frac{L_G}{\mu_G}
+
\sqrt{\frac{\lambda_{\max}(\mathbf{B}^\top \mathbf{B})}{\mu_F\mu_G}}
$
\\
\citet{lin2020near}
&
$
\sqrt{
\frac{L_FL_G}{\mu_F\mu_G}
}
+
\sqrt{
\frac{\lambda_{\max}(\mathbf{B}^\top \mathbf{B})}{\mu_F \mu_G}
}
$
\\
\citet{wang2020improved}
&
$
\sqrt{\frac{L_F}{\mu_F}\lor\frac{L_G}{\mu_G}}
+
\sqrt{
\frac{
\sqrt{\lambda_{\max}(\mathbf{B}^\top \mathbf{B}) L_FL_G}	+	\lambda_{\max}(\mathbf{B}^\top \mathbf{B}) 
}{\mu_F \mu_G}
}
$
\\
\citet{xie2021dippa}
&
$
\sqrt[4]{\frac{L_FL_G}{\mu_F\mu_G}\left(\frac{L_F}{\mu_F}\lor\frac{L_G}{\mu_G}\right)}
+
\sqrt{\frac{\lambda_{\max}(\mathbf{B}^\top \mathbf{B})}{\mu_F \mu_G}}
$
\\
\begin{tabular}{@{}l@{}}
\citet{kovalev2021accelerated} and concurrently
\\
\citet{thekumparampil2022lifted,jin2022sharper}
\end{tabular}
&
$
\sqrt{\frac{L_F}{\mu_F}\lor\frac{L_G}{\mu_G}}
+
\sqrt{\frac{\lambda_{\max}(\mathbf{B}^\top \mathbf{B})}{\mu_F\mu_G}}
$
\\
AG-EG (this work), Theorems \ref{theo_AG_EG} \& \ref{theo_AG_EG_Str}
&
$
\sqrt{\frac{L_F}{\mu_F}\lor\frac{L_G}{\mu_G}}
+
\sqrt{\frac{\lambda_{\max}(\mathbf{B}^\top \mathbf{B})}{\mu_F\mu_G}}
$
\\ \midrule
\citet{zhang2021lower} (Lower bound)
&
$
\Omega\left(\left(
\sqrt{\frac{L_F}{\mu_F}\lor\frac{L_G}{\mu_G}}
+
\sqrt{\frac{\lambda_{\max}(\mathbf{B}^\top \mathbf{B})}{\mu_F\mu_G}}
\right) \log\left(\frac{1}{\varepsilon}\right)
\right)
$
\\ \hhline{==}
\citet{gidel2019negative} among other work
&
$
\tfrac{\lambda_{\max}(\mathbf{B}^\top \mathbf{B})}{\lambda_{\min}(\mathbf{B}\mathbf{B}^\top)}
$
\\
\citet{azizian2020accelerating,li2021convergence}
&
$
\sqrt{
\tfrac{\lambda_{\max}(\mathbf{B}^\top \mathbf{B})}{\lambda_{\min}(\mathbf{B}\mathbf{B}^\top)}
}
$
\\
AG-EG (this work), Corollary \ref{theo_S_AG_EG_Bilinear_restart}
&
$
\sqrt{
\tfrac{\lambda_{\max}(\mathbf{B}^\top \mathbf{B})}{\lambda_{\min}(\mathbf{B}\mathbf{B}^\top)}
}
$
\\ \midrule
\citet{ibrahim2020linear} (Lower bound)
&
$
\Omega\left(
\sqrt{\frac{\lambda_{\max}(\mathbf{B}^\top \mathbf{B})}{\lambda_{\min}(\mathbf{B}\mathbf{B}^\top)}} \log\left(\frac{1}{\varepsilon}\right)
\right)
$
\\ \bottomrule
\end{tabular}
\caption{Table of comparison with related work for both strongly case and bilinear case, concentrating on the nonstochastic setting. For upper bounds, a polylogarithmic prefactor is ignored.}%
\label{tab_comp}
\end{table}

\paragraph{Stochastic setting.}
Stochastic minimax optimization has been studied intensively as a special case of the variational inequalities.
It is widely accepted in classical literature on stochastic variational inequality~\citep{nemirovski2009robust,juditsky2011solving} that the set of parameters and the variance of the stochastic estimate of the vector field are bounded.
\citet{chen2017accelerated} extended the analysis of \cite{juditsky2011solving} that accelerates the convergence rates for a class of variational inequalities. \citet{iusem2017extragradient} proposed an analysis of stochastic extragradient using large batches to reduce the variance. \citet{mertikopoulos2018optimistic} showed almost sure convergence of SEG to a strictly coherent solution (a.k.a.~star-strict monotone VIP). In a similar vein,~\citet{ryu2019ode} showed that SGDA with anchoring almost surely converge to strictlyconvex-concave saddle points.
\citet{fallah2020optimal} developed a multistage variant of stochastic gradient descent ascent and stochastic optimistic gradient descent ascent with constant learning rate decay schedule. We improve upon their rates since their iteration complexity depends on a significantly larger condition number than our method and is infinite in absence of strong convex-concavity. 
They achieved the optimal dependency on the noise variance but suboptimal dependency on the condition number.
\citet{hsieh2020explore} developed a double stepsize extragradient method and proved the last-iterate convergence rates under an error bound condition similar to star-strong monotonicity. 
\citet{kotsalis2020simple} proposed a simple and optimal scheme for a class of generalized strongly monotone (stochastic) variational inequalities.
Due to the unconstrained nature of stochastic bilinear models, these two assumptions do not hold in this case because the noise increases with the value of the parameters.
In recent work,~\citet{mishchenko2020revisiting} has shown that stochastic extragradients can be computed under a different step size, which removes the bounded domain assumption, while still requiring the bounded noise assumption. The work also discussed the advantages of using the same mini-batch for the two gradients in stochastic extragradients.
In another vein,~\citet{jelassi2020extra} focuses on stochastic extragradient in games with a large number of players. In that case they propose an extragradient algorithm that randomly update a small subset of the players at each iterations.

\paragraph{Organization.}
The rest of this work is organized as follows.
\S\ref{sec_settings_assumptions} presents the basic settings and assumptions.
\S\ref{sec_AG_EG_BLin} gives the optimality of convergence for our proposed AcceleratedGradient-Extragradient (AG-EG) descent-ascent algorithm, for the class of bilinear games, and \S\ref{sec_AG_EG} presents the optimality of AG-EG for the class of strongly-convex-strongly-concave objectives.
\S\ref{sec_AG_EG_Str} provides an alternative direct approach for the same strongly-convex-strongly-concave function class.
\S\ref{sec_discuss} discusses future directions.
In the Appendix, \S\ref{sec_proof} details the proofs of our main convergence results, and \S\ref{sec_proofaux} supplements the proofs with auxiliary lemmas.

\paragraph{Notations.}
Let $\lambda_{\max}(\mathbf{M})$ (resp.~$\lambda_{\min}(\mathbf{M})$ be the largest (resp.~smallest) eigenvalue of a real symmetric matrix $\mathbf{M}$. 
Let $a\lor b \equiv \max(a,b)$ (resp.~$a\land b \equiv \min(a,b)$) denote the maximum (resp.~minimum) value of two reals $a,b$.
For two nonnegative real sequences $(a_n)$ and $(b_n)$, we write $a_n = O(b_n)$ or $a_n\lesssim b_n$ (resp.~$a_n = \Omega(b_n)$ or $a_n\gtrsim b_n$) to denote $a_n\le Cb_n$ (resp.~$a_n\ge Cb_n$) for all $n\ge 1$ for a positive, numerical constant $C$, and let $a_n\asymp b_n$ if both $a_n\lesssim b_n$ and $a_n\gtrsim b_n$ hold.
We also let $a_n = \tilde{O}\left(b_{n}\right)$ denote $a_n\le C b_n$ where $C$ hides a polylogarithmic factor in problem-dependent constants, and let $[\mathbf{x}; \mathbf{y}]\in \reals^{n+m}$ concatenate two vectors $\mathbf{x}\in \reals^n$ and $\mathbf{y}\in \reals^m$.
Finally for two real symmetric matrices $\mathbf{A}$ and $\mathbf{B}$, we denote $\mathbf{A}\preceq \mathbf{B}$ (resp.~$\mathbf{A}\succeq \mathbf{B}$) when $\mathbf{v}^\top (\mathbf{A}-\mathbf{B}) \mathbf{v}\le 0$ (resp.~$\mathbf{v}^\top (\mathbf{A} - \mathbf{B}) \mathbf{v}\ge 0$) holds for all vectors $\mathbf{v}$.

\pb\section{Setting and assumptions}\label{sec_settings_assumptions}
In this section, we formally introduce our framework and assumptions.
Our development is inspired by the work of \citet{chen2017accelerated} on a (stochastic) \emph{Accelerated MirrorProx (AMP)} algorithm.
This work is developed in the general setting of monotone variational inequalities with a $O(1/\sqrt{T})$ convergence rate bound where the prefactor depends on domain size and hence does not accommodate unbounded domains.
As a result when translated directly into minimax optimization, this result does not match the lower bound in \citep{zhang2021lower}.
To achieve the lower bound, we present an alternative approach in Algorithm \ref{algo_AG_EG}, the stochastic \emph{accelerated gradient-extragradient (AG-EG)} descent-ascent algorithm.
Our algorithm applies Nesterov’s acceleration method \citep{nesterov1983method} to the individual $F(\mathbf{x})$ and $G(\mathbf{y})$ terms and applies the extragradient method \citep{korpelevich1976extragradient} to the bilinear coupling part.
As we show, a particular combination---with the incorporation of scheduled restarting---succeeds at matching the lower bound provided in \citet{ibrahim2020linear} and \citet{zhang2021lower} in their corresponding settings.

\begin{algorithm}[!t]
\caption{Stochastic AcceleratedGradient-ExtraGradient (AG-EG) Descent-Ascent Algorithm, with Scheduled Restarting}
\begin{algorithmic}[1]
\REQUIRE 
Initialization $\mathbf{x}_0^{[0]}, \mathbf{y}_0^{[0]}$, total number of epoches $\Sholder\ge 1$, total number of per-epoch iterates $(\Tholder_{\epoch}: \epoch=1,\dots,\Sholder)$, step sizes $(\alpha_t, \stepsize_t: t=1,2,\dots)$, ratio of strong-convexity parameters $\ratio = \frac{\mu_G}{\mu_F}$
\FOR{$\epoch=1, 2,\dots,\Sholder$}
\STATE
Set $
\mathbf{x}\avg{-\frac12}\leftarrow \mathbf{x}_0^{[\epoch-1]}
$, $
\mathbf{y}\avg{-\frac12}\leftarrow \mathbf{y}_0^{[\epoch-1]}
$, $
\mathbf{x}_0\leftarrow \mathbf{x}_0^{[\epoch-1]}
$, $
\mathbf{y}_0\leftarrow \mathbf{y}_0^{[\epoch-1]}
$, $
\mathbf{x}\Xtrap{0}\leftarrow \mathbf{x}_0^{[\epoch-1]}
$, $
\mathbf{y}\Xtrap{0}\leftarrow \mathbf{y}_0^{[\epoch-1]}
$
\FOR{$t=1, 2,\dots,\Tholder_{\epoch}$}
\STATE
Draw samples $\xi_{t-\frac12}\sim \mathcal{D}_\xi$ from oracle, and also $\zeta_{t-\frac12}, \zeta_t\sim \mathcal{D}_\zeta$ independently from oracle
\STATE\label{line_a}
$
\mathbf{x}_{t-\frac12}	\leftarrow	\mathbf{x}_{t-1} - \stepsize_t\left(
\nabla f(\mathbf{x}\Xtrap{t-1}; \xi_{t-\frac12})
+
\nabla_{\mathbf{x}} h(\mathbf{x}_{t-1}, \mathbf{y}_{t-1}; \zeta_{t-\frac12})
\right)
$
\STATE\label{line_b}
$
\mathbf{y}_{t-\frac12}	\leftarrow	\mathbf{y}_{t-1} - \tfrac{\stepsize_t}{\ratio}\left(
-\nabla_{\mathbf{y}} h(\mathbf{x}_{t-1}, \mathbf{y}_{t-1}; \zeta_{t-\frac12})
+
\nabla g(\mathbf{y}\Xtrap{t-1}; \xi_{t-\frac12})
\right)
$
\STATE\label{line_c}
$
\mathbf{x}\avg{t-\frac12}	\leftarrow	(1-\alpha_t)\mathbf{x}\avg{t-\frac32} + \alpha_t\mathbf{x}_{t-\frac12}
$
\STATE\label{line_d}
$
\mathbf{y}\avg{t-\frac12}	\leftarrow	(1-\alpha_t)\mathbf{y}\avg{t-\frac32} + \alpha_t\mathbf{y}_{t-\frac12}
$
\STATE\label{line_e}
$
\mathbf{x}_t			\leftarrow	\mathbf{x}_{t-1} - \stepsize_t\left(
\nabla f(\mathbf{x}\Xtrap{t-1}; \xi_{t-\frac12})
+
\nabla_{\mathbf{x}} h(\mathbf{x}_{t-\frac12}, \mathbf{y}_{t-\frac12}; \zeta_t)
\right)
$
\STATE\label{line_f}
$
\mathbf{y}_t			\leftarrow	\mathbf{y}_{t-1} - \tfrac{\stepsize_t}{\ratio}\left(
-\nabla_{\mathbf{y}} h(\mathbf{x}_{t-\frac12}, \mathbf{y}_{t-\frac12}; \zeta_t)
+
\nabla g(\mathbf{y}\Xtrap{t-1}; \xi_{t-\frac12})
\right)
$
\STATE\label{line_g}
$
\mathbf{x}\Xtrap{t}		\leftarrow	(1-\alpha_{t+1})\mathbf{x}\avg{t-\frac12} + \alpha_{t+1}\mathbf{x}_t
$
\STATE\label{line_h}
$
\mathbf{y}\Xtrap{t}		\leftarrow	(1-\alpha_{t+1})\mathbf{y}\avg{t-\frac12} + \alpha_{t+1}\mathbf{y}_t
$
\ENDFOR
\STATE\label{lineoutput}
Set $
\mathbf{x}_0^{[\epoch]}\leftarrow \mathbf{x}\avg{\Tholder_{\epoch}-\frac12}
$, $
\mathbf{y}_0^{[\epoch]}\leftarrow \mathbf{y}\avg{\Tholder_{\epoch}-\frac12}
$
\hfill \text{//Warm-start using the output of the previous epoch}
\ENDFOR
\STATE {\bfseries Output:}
$[\mathbf{x}_0^{[\Sholder]}; \mathbf{y}_0^{[\Sholder]}]$
\end{algorithmic}
\label{algo_AG_EG}
\end{algorithm}

For simplicity, we consider unconstrained domains $\mathbf{x}\in \reals^n$ and $\mathbf{y}\in \reals^m$.
For the constrained case with convex domains one can introduce a projection step and proceed analogously with the analysis; we omit this generalization for simplicity.
We first state the smoothness and convexity assumptions that we impose on the $F(\mathbf{x})$ and $G(\mathbf{y})$ terms.
\begin{assumption}[Convexity and smoothness]\label{assu_cvxsmth}
We assume that $F(\mathbf{x})$ is $L_F$-smooth and $\mu_F$-strongly convex, and $G(\mathbf{y})$ is $L_G$-smooth and $\mu_G$-strongly convex.
That is, for any $\mathbf{x},\mathbf{x}'\in \reals^n$,
$$
\tfrac{\mu_F}{2}\|\mathbf{x} - \mathbf{x}'\|^2
\le
F(\mathbf{x}) - F(\mathbf{x}') - \nabla F(\mathbf{x}')^\top (\mathbf{x} - \mathbf{x}')
\le
\tfrac{L_F}{2}\|\mathbf{x} - \mathbf{x}'\|^2,
$$
and for any $\mathbf{y},\mathbf{y}'\in \reals^m$,
$$
\tfrac{\mu_G}{2}\|\mathbf{y} - \mathbf{y}'\|^2
\le
G(\mathbf{y}) - G(\mathbf{y}') - \nabla G(\mathbf{y}')^\top (\mathbf{y} - \mathbf{y}')
\le
\tfrac{L_G}{2}\|\mathbf{y} - \mathbf{y}'\|^2
.
$$
\end{assumption}

We assume that the coupling matrix $\mathbf{B}$ is a tall matrix, which can otherwise be satisfied by considering the symmetrized problem $\min_\mathbf{y}\max_\mathbf{x} -f(\mathbf{x},\mathbf{y})$ (an equivalence guaranteed by the strong convexity of the functions and Sion’s minimax theorem~\citep{sion1958general}.) 
\begin{assumption}[Coupling matrix]\label{assu_coupling}
We assume without loss of generality that $\mathbf{B}$ is tall, i.e., $n\ge m$.
\end{assumption}
Assumption \ref{assu_coupling}, which is introduced for the purpose of notational consistency, guarantees that $
\lambda_{\max}(\mathbf{B}^\top \mathbf{B}) = \lambda_{\max}(\mathbf{B}\mathbf{B}^\top)
$ but $
\lambda_{\min}(\mathbf{B}^\top \mathbf{B}) \ge \lambda_{\min}(\mathbf{B}\mathbf{B}^\top)
$, where the latter is strictly zero when $\mathbf{B}$ is nonsquare.

It is straightforward to show that~\eqref{problemopt_stochastic} admits a unique saddle point (or Nash equilibrium) in the strongly-convex-strongly-concave case [Assumption~\ref{assu_cvxsmth}]; i.e., there exists a unique pair $(\oholder_\mathbf{x}^\star, \oholder_\mathbf{y}^\star)$ such that
\begin{equation}\label{assu_saddle}
\mathscr{F}(\oholder_{\mathbf{x}}^\star, \mathbf{y})
\le
\mathscr{F}(\oholder_{\mathbf{x}}^\star, \oholder_{\mathbf{y}}^\star)
\le
\mathscr{F}(\mathbf{x}, \oholder_{\mathbf{y}}^\star)
,\qquad
\text{for all $\mathbf{x}\in\reals^n$ and $\mathbf{y}\in\reals^d$}
\,.
\tag{SP}
\end{equation}
For the bilinear game case where $L_F = \mu_F = 0$, $L_G = \mu_G = 0$, this is satisfied for square matrices $\mathbf{B}$ with least singular value being strictly positive.

Third, we impose assumptions on the noise variance bound.
We first introduce the following rescaling parameters:
\beq\label{params}
\smoothFG = L_F \lor \left( \tfrac{\mu_F}{\mu_G} L_G \right)
,\qquad
\smoothBLin = \sqrt{\lambda_{\max}(\mathbf{B}^\top \mathbf{B})\cdot \tfrac{\mu_F}{\mu_G}}
,\qquad
\strcvx = \mu_F
,\qquad
\ratio = \tfrac{\mu_G}{\mu_F}
.
\eeq

\begin{assumption}[Unbiased gradients and variance bounds]\label{assu_noise}
We assume that $\mathbf{x}\in \reals^n, \mathbf{y}\in \reals^m$, $\xi\sim \mathcal{D}_\xi$ and $\zeta\sim \mathcal{D}_\zeta$ are drawn from distributions such that the following conditions hold: $
\Exs_\xi[\nabla f(\mathbf{x}; \xi)] = \nabla F(\mathbf{x})
$, $
\Exs_\xi[\nabla g(\mathbf{y}; \xi)] = \nabla G(\mathbf{y})
$, $
\Exs_\zeta[\nabla_{\mathbf{x}} h(\mathbf{x}, \mathbf{y}; \zeta)]	=	\nabla_{\mathbf{x}} H(\mathbf{x}, \mathbf{y})
$ and $
\Exs_\zeta[\nabla_{\mathbf{y}} h(\mathbf{x}, \mathbf{y}; \zeta)]	=	\nabla_{\mathbf{y}} H(\mathbf{x}, \mathbf{y})
$, with
\begin{align}\label{noisevarone}
\begin{aligned}
\Exs_\xi\left[
\|\nabla f(\mathbf{x}; \xi)	-	\nabla F(\mathbf{x})\|^2
+
\tfrac{1}{\ratio}\|\nabla g(\mathbf{y}; \xi)	-	\nabla G(\mathbf{y})\|^2
\right]
\le
\stdFG^2
,
\end{aligned}\end{align}
and
\begin{align}\label{noisevartwo}
\begin{aligned}
\Exs_\zeta\left[
\|\nabla_{\mathbf{x}} h(\mathbf{x}, \mathbf{y}; \zeta)	-	\nabla_{\mathbf{x}} H(\mathbf{x}, \mathbf{y})\|^2
+
\tfrac{1}{\ratio}\|-\nabla_{\mathbf{y}} h(\mathbf{x}, \mathbf{y}; \zeta)	+	\nabla_{\mathbf{y}} H(\mathbf{x}, \mathbf{y})\|^2
\right]
\le
\stdBLin^2
.
\end{aligned}\end{align}
\end{assumption}

For all results in this work, we suppose that Assumptions \ref{assu_cvxsmth}, \ref{assu_coupling} and \ref{assu_noise} hold with appropriate parameter settings.
Given a desired accuracy $\varepsilon > 0$, our goal is to find an $\varepsilon$-saddle point $(\mathbf{x},\mathbf{y})$, where $\|\mathbf{x} - \oholder_{\mathbf{x}}^\star\|^2 + \ratio\|\mathbf{y} - \oholder_{\mathbf{y}}^\star\|^2\le \varepsilon^2$---for the purposes of our analysis we adopt this slightly different metric that is equivalent to Euclidean norm.
The resulting iteration complexities in the Euclidean norm are obtained by replacing the $\varepsilon$-desired accuracy for that metric by $\varepsilon/(\sqrt{\ratio\lor\ratio^{-1}})$.%
\footnote{The metric conversion from duality gap to Euclidean distance or weighted Euclidean distance to saddle in our case is straightforward: they are equivalent up to a multiplicative factor. In other words, although our algorithmic convergence is characterized by the metric of weighted Euclidean distance, it completely matches both lower bounds by \citet{zhang2021lower} and \citet{ibrahim2020linear} in the nonstochastic setting.}

\pb\section{Optimality for bilinear games}\label{sec_AG_EG_BLin}
We first consider the particular case of bilinear games, where we show that Algorithm \ref{algo_AG_EG}, with proper averaging and scheduled restarting, achieves an optimal statistical rate up to a constant prefactor and with a bias term that matches the lower bound of \citet[Theorem 3]{ibrahim2020linear} for bilinear games.
\emph{We assume in this section that $n=m$ where $\mathbf{B}$ is a nonsingular square matrix}, $\nabla f(\mathbf{x}; \xi) = \mathbf{0}$ and $\nabla g(\mathbf{y}; \xi) = \mathbf{0}$ a.s., so \eqref{problemopt_stochastic} reduces to
\beq\label{BilinearF_stochastic}
\min_{\mathbf{x}}\max_{\mathbf{y}}~
\mathscr{F}(\mathbf{x},\mathbf{y})
=
\Exs_\zeta\left[h(\mathbf{x}, \mathbf{y}; \zeta)\right]
=
H(\mathbf{x},\mathbf{y})
=
\mathbf{x}^\top \mathbf{B}\mathbf{y}
- 
\mathbf{x}^\top \mathbf{u}_{\mathbf{x}} 
+ 
\mathbf{u}_{\mathbf{y}}^\top \mathbf{y}
,
\eeq
and Algorithm \ref{algo_AG_EG} reduces to the independent-sample extragradient descent-ascent algorithm for \eqref{BilinearF_stochastic}.
The saddle point $[\oholder_{\mathbf{x}}^\star; \oholder_{\mathbf{y}}^\star]$ in this case is the unique solution to the linear equation
$$\begin{bmatrix}
\mathbf{0}			&	\mathbf{B}
\\
-\mathbf{B}^\top	&	\mathbf{0}
\end{bmatrix}
\begin{bmatrix}
\oholder_{\mathbf{x}}^\star	\\	\oholder_{\mathbf{y}}^\star
\end{bmatrix}
=
\begin{bmatrix}
\mathbf{u}_{\mathbf{x}}	\\	\mathbf{u}_{\mathbf{y}}
\end{bmatrix}
,
\quad\text{which reduces to $
\begin{bmatrix}
-(\mathbf{B}^\top)^{-1}\mathbf{u}_{\mathbf{y}}
\\
\mathbf{B}^{-1}\mathbf{u}_{\mathbf{x}}
\end{bmatrix}
$
}.
$$
In earlier work, \citet[Proposition 7]{azizian2020accelerating} achieve an upper bound that matches the lower bound of \citet{ibrahim2020linear}.
Our algorithm is in the independent-sample setting with bounded noise variance, that is, it consumes two independent samples, one for each in the extrapolation and update steps.
This is different from the version of \citet{li2021convergence} that consumes a shared sample in both steps.
We allow $[\mathbf{x}_0; \mathbf{y}_0]$ to be randomly initialized, which reduces to a point mass in the case of nonrandom initialization.
Due to the special stepsize selection in the averaging, our analysis of stochastic bilinear game yields the following:

\begin{theorem}[Convergence of stochastic AG-EG, bilinear case]\label{theo_S_AG_EG_Bilinear}
Setting parameters as in \eqref{params} with $\ratio$ being arbitrary, $
\smoothBLin	= \sqrt{\lambda_{\max}(\mathbf{B}^\top \mathbf{B})\cdot \tfrac{1}{\ratio}}
$, $
\smoothFG	= \strcvx = 0
$, and also choosing the stepsizes $\alpha_t = \frac{2}{t+1}$ and $\stepsize_t
\equiv
\tfrac{1}{\smoothBLin}
=
\sqrt{\tfrac{\ratio}{\lambda_{\max}(\mathbf{B}^\top \mathbf{B})}}
$, we have
\begin{equation}\label{AG_EG_Bilinear}\begin{aligned}
\lefteqn{
\Exs\left[
\|\mathbf{x}\avg{\Tholder-\frac12} - \oholder_{\mathbf{x}}^\star\|^2 
+ 
\ratio\|\mathbf{y}\avg{\Tholder-\frac12} - \oholder_{\mathbf{y}}^\star\|^2
\right]
}
\\&\le
\tfrac{\ratio}{\lambda_{\min}(\mathbf{B}\mathbf{B}^\top)}\left(
\tfrac{4\sqrt{\lambda_{\max}(\mathbf{B}^\top \mathbf{B})\cdot\frac{1}{\ratio}}}{\Tholder}
\sqrt{\Exs\left[
\|\mathbf{x}_0 - \oholder_{\mathbf{x}}^\star\|^2 
+ 
\ratio\|\mathbf{y}_0 - \oholder_{\mathbf{y}}^\star\|^2
\right]}
+
\tfrac{7\stdBLin}{\sqrt{\Tholder}}
\right)^2
.
\end{aligned}\end{equation}
\end{theorem}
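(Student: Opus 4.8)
\emph{Proof plan.} The plan is to exploit the bilinear structure to rewrite Algorithm~\ref{algo_AG_EG}, in this special case, as a plain independent-sample stochastic extragradient recursion for a \emph{skew-symmetric affine} operator, and then to convert the $t$-weighted averaging into an \emph{exact} telescoping identity whose right-hand side involves only iterate norms and a martingale sum.

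\textbf{Step 1: reduction to symmetric extragradient.} Since $\nabla f\equiv\nabla g\equiv\mathbf 0$, the extrapolation and averaging auxiliary variables are inert in Lines~\ref{line_a}--\ref{line_f}. Setting $\hat{\mathbf y}=\sqrt{\ratio}\,\mathbf y$ and $\hat{\mathbf z}=[\mathbf x;\hat{\mathbf y}]$, those lines become
$$
\hat{\mathbf z}_{t-\frac12}=\hat{\mathbf z}_{t-1}-\stepsize\big(\mathbf A\mathbf r_{t-1}+\boldsymbol\delta_{t-\frac12}\big),\qquad
\hat{\mathbf z}_{t}=\hat{\mathbf z}_{t-1}-\stepsize\big(\mathbf A\mathbf r_{t-\frac12}+\boldsymbol\delta_{t}\big),
$$
where $\mathbf r_{t}=\hat{\mathbf z}_{t}-\hat{\mathbf z}^\star$, $\mathbf r_{t-\frac12}=\hat{\mathbf z}_{t-\frac12}-\hat{\mathbf z}^\star$, $\hat{\mathbf z}^\star=[\oholder_{\mathbf x}^\star;\sqrt{\ratio}\,\oholder_{\mathbf y}^\star]$ is the (rescaled) saddle point, $\mathbf A=\begin{bmatrix}\mathbf 0&\mathbf B/\sqrt{\ratio}\\-\mathbf B^\top/\sqrt{\ratio}&\mathbf 0\end{bmatrix}$ is skew-symmetric with $\|\mathbf A\|=\smoothBLin=1/\stepsize$, and $\boldsymbol\delta_{t-\frac12},\boldsymbol\delta_t$ are the independent, conditionally mean-zero noises whose second moments are bounded by $\stdBLin^2$ --- this is exactly the content of \eqref{noisevartwo} in these weighted coordinates. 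Since $\mathbf B$ is square and nonsingular, $\mathbf A$ is invertible with $\|\mathbf A^{-1}\|^2=\ratio/\lambda_{\min}(\mathbf B\mathbf B^\top)$, which supplies the prefactor in \eqref{AG_EG_Bilinear}. The averaging Lines~\ref{line_c}--\ref{line_d} are linear, so with $\alpha_t=\tfrac{2}{t+1}$ one has the closed form $\hat{\mathbf z}\avg{\Tholder-\frac12}=\tfrac{2}{\Tholder(\Tholder+1)}\sum_{t=1}^{\Tholder}t\,\hat{\mathbf z}_{t-\frac12}$, and the left side of \eqref{AG_EG_Bilinear} equals $\Exs\|\hat{\mathbf z}\avg{\Tholder-\frac12}-\hat{\mathbf z}^\star\|^2$.

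\textbf{Step 2: exact telescoping identity.} The update line rearranges, pathwise, to $\mathbf r_{t-\frac12}=\tfrac1\stepsize\mathbf A^{-1}(\mathbf r_{t-1}-\mathbf r_t)-\mathbf A^{-1}\boldsymbol\delta_t$. Weighting by $t$, summing, and telescoping via $\sum_{t=1}^{\Tholder}t(\mathbf r_{t-1}-\mathbf r_t)=\mathbf r_0-\Tholder\mathbf r_{\Tholder}+\sum_{t=1}^{\Tholder-1}\mathbf r_t$ yields
$$
\hat{\mathbf z}\avg{\Tholder-\frac12}-\hat{\mathbf z}^\star
=\tfrac{2}{\Tholder(\Tholder+1)}\,\mathbf A^{-1}\Big(\tfrac1\stepsize\big(\mathbf r_0-\Tholder\mathbf r_{\Tholder}+\textstyle\sum_{t=1}^{\Tholder-1}\mathbf r_t\big)-\sum_{t=1}^{\Tholder}t\,\boldsymbol\delta_t\Big),
$$
so it suffices to control $\Exs\|\mathbf r_t\|^2$ for $t\le\Tholder$ and the martingale second moment $\Exs\|\sum_{t=1}^{\Tholder}t\,\boldsymbol\delta_t\|^2$.

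\textbf{Step 3: per-step $L^2$-stability (the crux) and assembly.} The main obstacle is that the step size is \emph{critical}, $\stepsize\|\mathbf A\|=1$, so one cannot hope for contraction of $\|\mathbf r_t\|$ and must instead show it grows only through the injected noise. Conditioning on the past $\mathcal F_{t-1}$ and invoking skew-symmetry --- which annihilates the would-be dominant terms, giving $\Exs\langle\mathbf A\mathbf r_{t-\frac12},\mathbf r_{t-1}\rangle=\stepsize\|\mathbf A\mathbf r_{t-1}\|^2$ and $\|\mathbf A(\mathbf I-\stepsize\mathbf A)\mathbf r_{t-1}\|^2=\|\mathbf A\mathbf r_{t-1}\|^2+\stepsize^2\|\mathbf A^2\mathbf r_{t-1}\|^2\le 2\|\mathbf A\mathbf r_{t-1}\|^2$, the last bound being precisely where $\stepsize\|\mathbf A\|=1$ enters --- together with the variance bounds on $\boldsymbol\delta_{t-\frac12},\boldsymbol\delta_t$ and their independence, one obtains $\Exs[\|\mathbf r_t\|^2\mid\mathcal F_{t-1}]\le\|\mathbf r_{t-1}\|^2+2\stepsize^2\stdBLin^2$, hence $\Exs\|\mathbf r_t\|^2\le D_0+2t\stepsize^2\stdBLin^2$ with $D_0=\Exs[\|\mathbf x_0-\oholder_{\mathbf x}^\star\|^2+\ratio\|\mathbf y_0-\oholder_{\mathbf y}^\star\|^2]$; martingale orthogonality gives $\Exs\|\sum_{t=1}^{\Tholder}t\,\boldsymbol\delta_t\|^2=\sum_{t=1}^{\Tholder}t^2\Exs\|\boldsymbol\delta_t\|^2\le\stdBLin^2\sum_{t=1}^{\Tholder}t^2$. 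Finally, take norms in the Step-2 identity, square, take expectations, and apply Minkowski's inequality to separate the $\mathbf r$-part from the $\boldsymbol\delta$-part; substituting $\stepsize^{-1}=\|\mathbf A\|=\smoothBLin$, $\|\mathbf A^{-1}\|^2=\ratio/\lambda_{\min}(\mathbf B\mathbf B^\top)$, the bound $\sqrt{D_0+2t\stepsize^2\stdBLin^2}\le\sqrt{D_0}+\stepsize\stdBLin\sqrt{2t}$, and the elementary sums $\sum_{t=1}^{\Tholder-1}\sqrt t\le\tfrac23\Tholder^{3/2}$, $\sum_{t=1}^{\Tholder}t^2\le\tfrac12\Tholder^2(\Tholder+1)$, the bias coefficient collapses to $4$ and the noise coefficient to a number below $7$, which is exactly \eqref{AG_EG_Bilinear}.
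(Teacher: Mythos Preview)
Your proposal is correct and follows essentially the same route as the paper's own proof in \S\ref{sec_proof,theo_S_AG_EG_Bilinear}: rescale to a skew-symmetric extragradient recursion, establish the last-iterate $L^2$-stability bound $\Exs\|\mathbf r_t\|^2\le D_0+2t\stepsize^2\stdBLin^2$, derive the telescoping identity that expresses the weighted average as $\mathbf A^{-1}$ applied to iterate and martingale sums, and combine. The only cosmetic differences are that the paper obtains the stability bound by writing out the one-step transition matrix $(\mathbf I-\stepsize\mathbf J+\stepsize^2\mathbf J^2)$ and checking $\mathbf I+\stepsize^2\mathbf J^2+\stepsize^4\mathbf J^4\preceq\mathbf I$ directly (rather than through the inner-product cancellations you spell out), and that the paper uses Young's inequality with an optimized parameter $\gamma$ where you use Minkowski --- these are equivalent and lead to the same constants.
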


The proof of Theorem \ref{theo_S_AG_EG_Bilinear} is provided in \S\ref{sec_proof,theo_S_AG_EG_Bilinear}.
For regularity purposes we set the ratio of strong-convexity parameters (both being zero) as $\ratio = 1$ in the rest of this section.%
\footnote{Since it is of an $\frac{0}{0}$-indefinite form, the result also holds for an arbitrary choice of $\ratio\in (0,\infty)$, providing flexibility on the parameter choices.}
Note that our choice of the stepsize is maximally feasible and independent of the noise.
Let us now consider a scheduled restarting version of the algorithm, with a constant epoch length $
\asymp 
\sqrt{
\tfrac{\lambda_{\max}(\mathbf{B}^\top \mathbf{B})}{\lambda_{\min}(\mathbf{B}\mathbf{B}^\top)}
}
$ steps using with the same constant stepsize, until the iteration reaches the stationary noise level in the sense that $
\Exs\left[
\|\mathbf{x}_0 - \oholder_{\mathbf{x}}^\star\|^2 
+ 
\ratio\|\mathbf{y}_0 - \oholder_{\mathbf{y}}^\star\|^2
\right]
\asymp
\tfrac{\stdBLin^2}{\sqrt{\lambda_{\min}(\mathbf{B}\mathbf{B}^\top)\lambda_{\max}(\mathbf{B}^\top \mathbf{B})}}
$.
The convergence rate for this restarting variant is linear in bias term plus an optimal statistical error term, as follows:

\begin{corollary}[Convergence of stochastic AG-EG with scheduled restarting, bilinear case]\label{theo_S_AG_EG_Bilinear_restart}
Equipped with scheduled restarting, the iteration complexity is bounded by
$$
O \left(
\sqrt{
\tfrac{\lambda_{\max}(\mathbf{B}^\top \mathbf{B})}{\lambda_{\min}(\mathbf{B}\mathbf{B}^\top)}
} \log\left(
\tfrac{
\sqrt[4]{\lambda_{\min}(\mathbf{B}\mathbf{B}^\top)\lambda_{\max}(\mathbf{B}^\top \mathbf{B})}
}{\stdBLin}
\right)
+
\tfrac{\stdBLin^2}{\lambda_{\min}(\mathbf{B}\mathbf{B}^\top)\varepsilon^2}
\right)
.
$$
\end{corollary}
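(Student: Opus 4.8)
\textbf{Proof plan for Corollary~\ref{theo_S_AG_EG_Bilinear_restart}.}
The plan is to chain the single-epoch bound of Theorem~\ref{theo_S_AG_EG_Bilinear} across epochs, using that the warm-start in line~\ref{lineoutput} feeds the averaged output $[\mathbf{x}\avg{\Tholder_{\epoch}-\frac12}; \mathbf{y}\avg{\Tholder_{\epoch}-\frac12}]$ of epoch $\epoch$ as the initialization of epoch $\epoch+1$. Fix $\ratio=1$, write $D^{[\epoch]} := \Exs\big[\|\mathbf{x}_0^{[\epoch]} - \oholder_{\mathbf{x}}^\star\|^2 + \|\mathbf{y}_0^{[\epoch]} - \oholder_{\mathbf{y}}^\star\|^2\big]$, and set $\kappa := \sqrt{\lambda_{\max}(\mathbf{B}^\top\mathbf{B})/\lambda_{\min}(\mathbf{B}\mathbf{B}^\top)}$ and $\nu^2 := \stdBLin^2/\sqrt{\lambda_{\min}(\mathbf{B}\mathbf{B}^\top)\lambda_{\max}(\mathbf{B}^\top\mathbf{B})}$ for the stationary noise level. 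Conditioning on the history up to the start of epoch $\epoch$, applying Theorem~\ref{theo_S_AG_EG_Bilinear} with per-epoch length $\Tholder_\epoch = c\kappa$ for an absolute constant $c$ to be chosen, using $(a+b)^2 \le 2a^2 + 2b^2$, and then taking full expectations gives a linear recursion
$$
D^{[\epoch]} \le \frac{32\,\lambda_{\max}(\mathbf{B}^\top\mathbf{B})}{c^2\,\kappa^2\,\lambda_{\min}(\mathbf{B}\mathbf{B}^\top)}\, D^{[\epoch-1]}
+ \frac{98\,\stdBLin^2}{c\,\kappa\,\lambda_{\min}(\mathbf{B}\mathbf{B}^\top)}
= \frac{32}{c^2}\, D^{[\epoch-1]} + \frac{98}{c}\,\nu^2 ,
$$
where I used $\kappa^2\lambda_{\min}(\mathbf{B}\mathbf{B}^\top) = \lambda_{\max}(\mathbf{B}^\top\mathbf{B})$ and $\kappa\,\lambda_{\min}(\mathbf{B}\mathbf{B}^\top) = \sqrt{\lambda_{\min}(\mathbf{B}\mathbf{B}^\top)\lambda_{\max}(\mathbf{B}^\top\mathbf{B})}$. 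Note the $\sqrt{\Exs[D_0]}$ appearing in Theorem~\ref{theo_S_AG_EG_Bilinear} is squared by the $(a+b)^2$ step, so no Jensen loss occurs and the recursion is genuinely linear. Taking $c$ a large enough absolute constant (e.g.\ $c=8$) makes the contraction factor at most $\tfrac12$, so $D^{[\epoch]} \le \tfrac12 D^{[\epoch-1]} + C_1\nu^2$ for an absolute $C_1$.

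Next I would unroll: $D^{[\Sholder]} \le 2^{-\Sholder}D^{[0]} + 2C_1\nu^2$. Running the restarted scheme for $\Sholder \asymp \log_2\!\big(D^{[0]}/\nu^2\big)$ epochs — equivalently, until the iterate has reached ``the stationary noise level'' $D^{[\Sholder]}\asymp\nu^2$ — costs $\Sholder\cdot c\kappa = O\!\big(\kappa \log(D^{[0]}/\nu^2)\big)$ iterations, and treating the initial distance $D^{[0]}$ as an $O(1)$ quantity absorbed into the prefactor, this equals $O\!\big(\sqrt{\lambda_{\max}(\mathbf{B}^\top\mathbf{B})/\lambda_{\min}(\mathbf{B}\mathbf{B}^\top)}\,\log(\sqrt[4]{\lambda_{\min}(\mathbf{B}\mathbf{B}^\top)\lambda_{\max}(\mathbf{B}^\top\mathbf{B})}/\stdBLin)\big)$, since $1/\nu = \sqrt[4]{\lambda_{\min}(\mathbf{B}\mathbf{B}^\top)\lambda_{\max}(\mathbf{B}^\top\mathbf{B})}/\stdBLin$. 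This produces the first (logarithmic) term.

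For the statistical regime, relevant when $\varepsilon^2 \lesssim \nu^2$, I would stop restarting and run one final long epoch of Theorem~\ref{theo_S_AG_EG_Bilinear} of length $\Tholder$ starting from $\Exs[D_0]\asymp\nu^2$. A short computation shows that once $\Tholder\gtrsim\kappa$ the bias term $\tfrac{4\sqrt{\lambda_{\max}(\mathbf{B}^\top\mathbf{B})}}{\Tholder}\sqrt{\Exs[D_0]}$ is dominated by the noise term $\tfrac{7\stdBLin}{\sqrt{\Tholder}}$, because $\sqrt{\lambda_{\max}(\mathbf{B}^\top\mathbf{B})}\,\nu/\stdBLin = \kappa^{1/2}$; hence $\Exs\big[\|\mathbf{x}\avg{\Tholder-\frac12}-\oholder_{\mathbf{x}}^\star\|^2 + \|\mathbf{y}\avg{\Tholder-\frac12}-\oholder_{\mathbf{y}}^\star\|^2\big] = O\!\big(\stdBLin^2/(\lambda_{\min}(\mathbf{B}\mathbf{B}^\top)\Tholder)\big)$, and setting this $\le\varepsilon^2$ gives $\Tholder = O\!\big(\kappa + \stdBLin^2/(\lambda_{\min}(\mathbf{B}\mathbf{B}^\top)\varepsilon^2)\big)$. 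Adding the two phases and converting from the weighted metric (trivial here since $\ratio=1$) yields the claimed complexity.

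The main obstacle is the bookkeeping at the interface of the two phases: one must verify that the floor at which the geometric recursion stabilizes is exactly $\Theta(\nu^2)$ so that the logarithmic argument comes out as $1/\nu$ rather than something larger, pick the absolute epoch-length constant $c$ so that the contraction is bounded away from $1$ while the per-epoch noise contribution stays $O(\nu^2)$, and separately dispose of the easy case $\varepsilon^2\gtrsim\nu^2$, where the restarting phase already suffices and the second term is subsumed. None of this is deep, but it requires care with constants; the rest is telescoping a linear recursion.
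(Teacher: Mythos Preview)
Your proposal is correct and follows essentially the same approach as the paper. The paper's own argument is only a one-sentence sketch preceding the corollary---constant epoch length $\asymp\kappa$ until the iterate reaches the stationary noise floor $\nu^2=\stdBLin^2/\sqrt{\lambda_{\min}(\mathbf{B}\mathbf{B}^\top)\lambda_{\max}(\mathbf{B}^\top\mathbf{B})}$---and your two-phase construction (constant-length restarts yielding the linear recursion $D^{[\epoch]}\le\tfrac12 D^{[\epoch-1]}+C_1\nu^2$, then one long epoch from the noise floor) is exactly the natural way to make that sketch rigorous and to produce the statistical term $\stdBLin^2/(\lambda_{\min}(\mathbf{B}\mathbf{B}^\top)\varepsilon^2)$ that the paper states but does not derive in the text.
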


In the setting where there is no stochasticity, setting $
\stdBLin	\asymp	\varepsilon\sqrt[4]{\lambda_{\min}(\mathbf{B}\mathbf{B}^\top)\lambda_{\max}(\mathbf{B}^\top \mathbf{B})}
$ the complexity bound in Corollary \ref{theo_S_AG_EG_Bilinear_restart} reduces to $
O \left(
\sqrt{
\tfrac{\lambda_{\max}(\mathbf{B}^\top \mathbf{B})}{\lambda_{\min}(\mathbf{B}\mathbf{B}^\top)}
} \log\left(\tfrac{1}{\varepsilon}\right)
\right)
$ and hence matches the lower bound of \citet{ibrahim2020linear}.
The $
\frac{\stdBLin^2}{\lambda_{\min}(\mathbf{B}\mathbf{B}^\top)\varepsilon^2}
$ term corresponds to the optimal statistical rate for the current problem.

\pb\section{Optimality for strongly-convex-strongly-concave objectives}\label{sec_AG_EG}
In this section, we proceed to solve \eqref{problemopt_stochastic} using Algorithm~\ref{algo_AG_EG} in the general strongly-convex-strongly-concave setting.
Unless otherwise specified, we assume throughout the rest of this paper that $H(\mathbf{x},\mathbf{y})$ is of bilinear form $
\mathbf{x}^\top \mathbf{B}\mathbf{y}
-
\mathbf{x}^\top \mathbf{u}_{\mathbf{x}} 
+
\mathbf{u}_{\mathbf{y}}^\top \mathbf{y}$, without assuming $n=m$ or the nonsingularity of $\mathbf{B}$.
Recall that Algorithm~\ref{algo_AG_EG} conducts acceleration on the strongly parts $F(\mathbf{x})$ and $G(\mathbf{y})$ and extrapolates on bilinear part $H(\mathbf{x},\mathbf{y})$.
We continue to allow $[\mathbf{x}_0; \mathbf{y}_0]$ be randomly initialized and denote
\begin{align}\label{eq_stepsize}
\bar{\stepsize}_t(\tilde{\sigma};\Tholder,\Cholder,\quantile,\beta)
\equiv
\frac{t}{\tfrac{2}{\quantile}\smoothFG\lor \frac{\tilde{\sigma} [\Tholder(\Tholder+1)^2]^{1/2}}{\Cholder\sqrt{\Exs\left[
\|\mathbf{x}_0 - \oholder_{\mathbf{x}}^\star\|^2
+
\ratio\|\mathbf{y}_0 - \oholder_{\mathbf{y}}^\star\|^2
\right]}} + \sqrt{\tfrac{1+\beta}{\quantile}}\smoothBLin t}
,
\end{align}
where $\Cholder\in (0,\infty)$ is an input parameter that allows flexibility in our stepsize selection.
We state our general result as follows:

\begin{theorem}[Convergence of stochastic AG-EG]\label{theo_S_AG_EG}
Let the epoch length $\Tholder\ge 1$ be known in advance, fix  $\quantile\in (0,1)$ and $\beta\in (0,\infty)$ arbitrarily, set the rescaling parameters $\smoothFG$, $\smoothBLin$, $\strcvx$, $\ratio$ as in \eqref{params}, set $
\sigma	\equiv
\frac{1}{\sqrt{3}}\sqrt{
\tfrac{1}{1-\quantile}\stdFG^2
+
(2+\tfrac{1}{\beta})\stdBLin^2
}
$
and choose the stepsizes $\alpha_t = \frac{2}{t+1}$ and $\stepsize_t
=
\bar{\stepsize}_t(\sigma;\Tholder,\Cholder,\quantile,\beta)
$ to be defined as in \eqref{eq_stepsize} with $\Cholder\in (0,\infty)$ being an input parameter.
We have that the output of single-epoch ($\Sholder=1$) Algorithm~\ref{algo_AG_EG} $
[\mathbf{x}\avg{\Tholder-\frac12}; \mathbf{y}\avg{\Tholder-\frac12}]
$ satisfies
\beq\label{S_AG_EG}\begin{aligned}
\lefteqn{
\Exs\left[
\|\mathbf{x}\avg{\Tholder-\frac12} - \oholder_{\mathbf{x}}^\star\|^2
+
\ratio\|\mathbf{y}\avg{\Tholder-\frac12} - \oholder_{\mathbf{y}}^\star\|^2
\right]
}
\\&\le
\tfrac{2}{\strcvx(\Tholder+1)}\left(
\frac{\tfrac{2}{\quantile}\smoothFG}{\Tholder} + \mathcal{A}(\sigma;\Tholder,\Cholder,\quantile,\beta) \sqrt{\tfrac{1+\beta}{\quantile}} \smoothBLin
\right)\Exs\left[
\|\mathbf{x}_0 - \oholder_{\mathbf{x}}^\star\|^2
+
\ratio\|\mathbf{y}_0 - \oholder_{\mathbf{y}}^\star\|^2
\right]
\\&\quad\,
+
\tfrac{2(\frac{1}{\Cholder}+\Cholder)\sigma}{\strcvx\Tholder^{1/2}} \sqrt{\Exs\left[
\|\mathbf{x}_0 - \oholder_{\mathbf{x}}^\star\|^2
+
\ratio\|\mathbf{y}_0 - \oholder_{\mathbf{y}}^\star\|^2
\right]
}
,
\end{aligned}\eeq
where the prefactor
\begin{align}\label{prefactorA}
\mathcal{A}(\tilde{\sigma};\Tholder,\Cholder,\quantile,\beta)	\equiv
1 + \frac{\Cholder\tilde{\sigma}[\Tholder(\Tholder+1)^2]^{1/2}}{\frac{1}{
\stepsize_1(\tilde{\sigma};\Tholder,\Cholder,\quantile,\beta)
}\sqrt{\Exs\left[
\|\mathbf{x}_0 - \oholder_{\mathbf{x}}^\star\|^2
+
\ratio\|\mathbf{y}_0 - \oholder_{\mathbf{y}}^\star\|^2
\right]
}}
,
\end{align}
 lies in $[1,1+\Cholder^2]$ and reduces to 1 when $\tilde{\sigma} = 0$.
\end{theorem}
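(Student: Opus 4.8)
The plan is to run a single potential-function (estimate-sequence) argument that couples Nesterov's acceleration on the $F,G$ pair with the extragradient mechanism on the bilinear coupling $H$. First I would recast \eqref{problemopt_stochastic} as a strongly monotone variational inequality for the rescaled operator $\calT = \calT_{\operatorname{Str}} + \calT_{\operatorname{Bil}}$ acting on $\arbholder = [\mathbf{x};\mathbf{y}]$: here $\calT_{\operatorname{Str}}$ collects $\nabla F$ and the $\ratio$-rescaled $\nabla G$, so that — in the weighted metric $D(\arbholder)\equiv\|\mathbf{x}-\oholder_\mathbf{x}^\star\|^2+\ratio\|\mathbf{y}-\oholder_\mathbf{y}^\star\|^2$ induced by the $\tfrac{\stepsize_t}{\ratio}$ factors in Lines \ref{line_b},\ref{line_f} — it is $\smoothFG$-Lipschitz and $\strcvx$-strongly monotone, while $\calT_{\operatorname{Bil}}$ is the skew-symmetric bilinear part, monotone and $\smoothBLin$-Lipschitz. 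The oracle draws define zero-mean noise vectors $\noiseFG,\noiseBLin$ with $\Exs[\|\noiseFG\|^2\mid\mathcal F_{t-1}]\le\stdFG^2$ and $\Exs[\|\noiseBLin\|^2\mid\mathcal F_{t-1}]\le\stdBLin^2$ by Assumption~\ref{assu_noise}; note that $\nabla f,\nabla g$ reuse the single sample $\xi_{t-\frac12}$ in both Lines \ref{line_a} and \ref{line_e} (so this block is a genuine accelerated-gradient step, not an extragradient step), whereas $\nabla_\mathbf{x} h,\nabla_\mathbf{y} h$ use the independent samples $\zeta_{t-\frac12},\zeta_t$ (a genuine extragradient step).

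Next I would establish the per-iteration inequality in two pieces and add them. For $\calT_{\operatorname{Str}}$, the md/ag recursions with $\alpha_t=\alpht$ are exactly Nesterov's scheme, so the three-point / estimate-sequence lemma combined with $\strcvx$-strong convexity and $\smoothFG$-smoothness yields $\Gamma_t^{-1}\mathcal G_t \le \Gamma_{t-1}^{-1}\mathcal G_{t-1} + \tfrac{1}{2\stepsize_t}(D(\arbholder_{t-1})-D(\arbholder_t)) + \mathcal E_t^{\operatorname{Str}}$, where $\mathcal G_t$ is a gap surrogate evaluated at the ag-iterate, $\Gamma_t^{-1}\asymp t(t+1)$, and the noise is handled by reserving a fraction $\quantile$ of the smoothness headroom (hence the $\tfrac{2}{\quantile}\smoothFG$ in $\bar\stepsize_t$) and paying $\tfrac{1}{1-\quantile}\stdFG^2$ on the residual. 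For $\calT_{\operatorname{Bil}}$, the half-step/full-step structure of Lines \ref{line_a}--\ref{line_f} is the classical extragradient update, so the standard identity gives $\stepsize_t\langle\calT_{\operatorname{Bil}}(\arbholder_{t-\frac12}),\arbholder_{t-\frac12}-\arbholder^\star\rangle \le \tfrac12(D(\arbholder_{t-1})-D(\arbholder_t)) - \tfrac12\|\arbholder_{t-1}-\arbholder_{t-\frac12}\|^2 + \tfrac{\stepsize_t^2(1+\beta)}{2}\smoothBLin^2\|\arbholder_{t-1}-\arbholder_{t-\frac12}\|^2 + \mathcal E_t^{\operatorname{Bil}}$, where a Young split with parameter $\beta$ separates the Lipschitz part (scaled by $1+\beta$, absorbed by the negative quadratic once $\stepsize_t\sqrt{\tfrac{1+\beta}{\quantile}}\smoothBLin\le1$, which the $\sqrt{\tfrac{1+\beta}{\quantile}}\smoothBLin t$ term in $\bar\stepsize_t$ guarantees since $\quantile<1$) from the noise part (scaled by $1+\tfrac1\beta$, which together with a second $\stdBLin^2$ from the independent-sample term produces the $(2+\tfrac1\beta)\stdBLin^2$ entering $\sigma$). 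Monotonicity of $\calT_{\operatorname{Bil}}$ makes the left-hand inner product nonnegative, and skew-symmetry cancels the $\mathbf{x}$--$\mathbf{y}$ cross terms against those generated in the $\calT_{\operatorname{Str}}$ piece.

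Then I would take conditional expectations (killing the bias), multiply the summed inequality by the accumulating weights $w_t$, and sum $t=1,\dots,\Tholder$. The $D(\arbholder_t)$ terms telescope to $D(\arbholder_0)$, the gap surrogates telescope to $\Gamma_\Tholder^{-1}\mathcal G_\Tholder$, and $\strcvx$-strong convexity converts the accumulated gap at the ag-iterate into $\tfrac{\strcvx}{2}\Exs[D(\arbholder\avg{\Tholder-\frac12})]$, producing the $\tfrac{2}{\strcvx(\Tholder+1)}$ prefactor. The crux is the noise sum, of the form $\sum_t \stepsize_t^2 w_t[\tfrac{1}{1-\quantile}\stdFG^2+(2+\tfrac1\beta)\stdBLin^2] = 3\sigma^2\sum_t\stepsize_t^2 w_t$: the third argument of the $\max$-defined denominator of $\bar\stepsize_t$, namely $\tfrac{\sigma[\Tholder(\Tholder+1)^2]^{1/2}}{\Cholder\sqrt{\Exs[D(\arbholder_0)]}}$, is calibrated precisely so that the $[\Tholder(\Tholder+1)^2]^{1/2}$ factor cancels the accumulated weight and leaves the statistical rate $\tfrac{2(\frac1\Cholder+\Cholder)\sigma}{\strcvx\Tholder^{1/2}}\sqrt{\Exs[D(\arbholder_0)]}$, while the other arguments of the $\max$ feed the $\tfrac{(2/\quantile)\smoothFG}{\Tholder}$ and $\mathcal A\sqrt{\tfrac{1+\beta}{\quantile}}\smoothBLin$ terms. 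Finally, $\mathcal A\in[1,1+\Cholder^2]$ is immediate: by definition $\mathcal A-1 = \tfrac{\Cholder\sigma[\Tholder(\Tholder+1)^2]^{1/2}}{\stepsize_1^{-1}\sqrt{\Exs[D(\arbholder_0)]}}\ge0$, and since $\stepsize_1^{-1}$ equals the $\max$ plus a nonnegative term we have $\stepsize_1^{-1} \ge \tfrac{\sigma[\Tholder(\Tholder+1)^2]^{1/2}}{\Cholder\sqrt{\Exs[D(\arbholder_0)]}}$, forcing $\mathcal A-1\le\Cholder^2$; when $\sigma=0$ the extra term vanishes and $\mathcal A=1$.

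I expect the \emph{main obstacle} to be the second step: pinning down one potential function that simultaneously supports Nesterov's estimate-sequence bookkeeping on $\calT_{\operatorname{Str}}$ and the extragradient error cancellation on $\calT_{\operatorname{Bil}}$, while tracking the bilinear cross terms (cancelled by skew-symmetry) and the three separate noise contributions so that the constants $\tfrac1{\sqrt3}$, $\tfrac1{1-\quantile}$, $2+\tfrac1\beta$ emerge exactly as in the definition of $\sigma$. Everything downstream — telescoping, gap-to-distance conversion, and the elementary bound on $\mathcal A$ — is then routine.
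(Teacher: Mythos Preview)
Your Steps 1--2 are essentially the paper's approach: the scaling reduction to equal strong-convexity parameters, the $V$-quantity as gap surrogate, the Nesterov three-point lemma on the $F,G$ piece, the extragradient inner-product lemma (Lemma~\ref{lemm_PRecursion}) on the bilinear piece, and the Young split with parameter $\beta$ all match. The constants $\tfrac{1}{1-\quantile}$ and $2+\tfrac{1}{\beta}$ entering $\sigma$ emerge exactly as you describe.

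But your Step 3 has a genuine gap. You write that after summing, ``the $D(\arbholder_t)$ terms telescope to $D(\arbholder_0)$.'' They do not. The weight multiplying $\Exs[D(\arbholder_{t-1})]-\Exs[D(\arbholder_t)]$ in the per-step inequality is $\tfrac{t}{\stepsize_t}$, and by construction of $\bar\stepsize_t$ this is an \emph{increasing} arithmetic sequence with common difference $\sqrt{\tfrac{1+\beta}{\quantile}}\smoothBLin$ (Lemma~\ref{lemm_ssproperties}(ii)). Abel summation therefore leaves a residual
\[
\sqrt{\tfrac{1+\beta}{\quantile}}\,\smoothBLin\sum_{t=2}^{\Tholder}\Exs[D(\arbholder_{t-1})],
\]
which is not a priori controlled by $\Exs[D(\arbholder_0)]$: the iterates $\arbholder_t$ could in principle drift. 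The prefactor $\mathcal A$ is \emph{not} ``fed by the other arguments of the $\max$'' in the stepsize as you suggest; it is precisely the uniform non-expansion bound $\Exs[D(\arbholder_t)]\le\mathcal A\,\Exs[D(\arbholder_0)]$ needed to control this residual, and establishing that bound is the heart of the proof.

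The paper handles this via a separate bootstrapping argument (its Step 3): first drop the nonnegative $V$-term from the telescoped inequality \eqref{recursion_bdd_key} to obtain a self-referential recursion for the partial sums $\mathcal Q_{\calT-1}=\sum_{t\le\calT}\Exs[D(\arbholder_{t-1})]$; rewrite it as $\kappa_{\calT}\mathcal Q_{\calT}\le\kappa_{\calT+1}\mathcal Q_{\calT-1}+\cdots$ with $\kappa_t\equiv t/\stepsize_t$; divide by $\kappa_{\calT}\kappa_{\calT+1}$ and telescope once more, exploiting the arithmetic-sequence identity $\sum(\kappa_T^{-1}-\kappa_{T+1}^{-1})=\sqrt{\tfrac{1+\beta}{\quantile}}\smoothBLin\sum(\kappa_T\kappa_{T+1})^{-1}$, to deduce $\Exs[D(\arbholder_{\calT})]\le\mathcal A\,\Exs[D(\arbholder_0)]$ for every $\calT\le\Tholder$; and only then return to \eqref{recursion_bdd_key} and plug this bound into the residual sum. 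Your ``main obstacle'' identification is therefore off: the potential function is standard; the real difficulty is this bootstrapping step, which your sketch omits entirely.
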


The proof of Theorem \ref{theo_S_AG_EG} is provided in \S\ref{sec_proof,theo_S_AG_EG}.
In the case that there is no stochasticity, by taking $\quantile\to 1^-$, $\beta\to 0^+$ in our analysis we obtain the following result:

\begin{theorem}[Convergence of AG-EG]\label{theo_AG_EG}
Setting the rescaling parameters $\smoothFG$, $\smoothBLin$, $\strcvx$, $\ratio$ as in \eqref{params}, we have that by choosing $\stepsize_t = \tfrac{t}{2\smoothFG + \smoothBLin t}$ the output of Algorithm~\ref{algo_AG_EG} with $\Sholder=1$ satisfies
\begin{equation}\label{AG_EG}
\|\mathbf{x}\avg{\Tholder-\frac12} - \oholder_{\mathbf{x}}^\star\|^2
+
\ratio\|\mathbf{y}\avg{\Tholder-\frac12} - \oholder_{\mathbf{y}}^\star\|^2
\le
\tfrac{2}{\strcvx (\Tholder+1)}\Big(\tfrac{2\smoothFG}{\Tholder} + \smoothBLin\Big)\left[ 
\|\mathbf{x}_0 - \oholder_{\mathbf{x}}^\star\|^2
+ 
\ratio\|\mathbf{y}_0 - \oholder_{\mathbf{y}}^\star\|^2 
\right]
,
\end{equation}
where $\mathbf{x}\avg{\Tholder-\frac12}$ and $\mathbf{y}\avg{\Tholder-\frac12}$ are defined in~Algorithm~\ref{algo_AG_EG}.
\end{theorem}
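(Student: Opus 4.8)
The plan is to deduce Theorem~\ref{theo_AG_EG} from Theorem~\ref{theo_S_AG_EG} by taking the noiseless limit $\quantile \to 1^-$, $\beta \to 0^+$. In the absence of stochasticity every oracle returns the exact gradient, so Assumption~\ref{assu_noise} holds with $\stdFG = \stdBLin = 0$; hence the quantity $\sigma$ appearing in Theorem~\ref{theo_S_AG_EG} equals $0$ for every admissible $(\quantile,\beta)$ --- the a priori divergent factors $\tfrac{1}{1-\quantile}$ and $\tfrac1\beta$ are multiplied by zero and are harmless --- the algorithm becomes deterministic, and each expectation in \eqref{S_AG_EG} and \eqref{prefactorA} may be removed. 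With $\tilde\sigma = 0$ the second argument of the maximum in the denominator of \eqref{eq_stepsize} vanishes, so $\bar{\stepsize}_t(0;\Tholder,\Cholder,\quantile,\beta) = \tfrac{t}{\frac{2}{\quantile}\smoothFG + \sqrt{(1+\beta)/\quantile}\,\smoothBLin\, t}$, and the prefactor $\mathcal{A}(0;\Tholder,\Cholder,\quantile,\beta) = 1$.

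I would then fix the epoch length $\Tholder$ and send $\quantile \uparrow 1$ and $\beta \downarrow 0$. Along this family the stepsizes $\bar{\stepsize}_1,\dots,\bar{\stepsize}_\Tholder$ (which, in the deterministic setting, are genuine constants independent of the iterates) converge to $\stepsize_t = \tfrac{t}{2\smoothFG + \smoothBLin t}$, the choice made in Theorem~\ref{theo_AG_EG}. Since, for fixed $\Tholder$, the iterates produced by Algorithm~\ref{algo_AG_EG} with $\Sholder = 1$ are obtained by composing finitely many updates, each of which is an affine combination of previously computed vectors together with the exact maps $\nabla F$, $\nabla G$, $\mathbf{B}$, the left-hand side of \eqref{S_AG_EG} is a continuous function of these stepsizes and therefore converges to $\|\mathbf{x}\avg{\Tholder-\frac12} - \oholder_{\mathbf{x}}^\star\|^2 + \ratio\|\mathbf{y}\avg{\Tholder-\frac12} - \oholder_{\mathbf{y}}^\star\|^2$ for the limiting stepsize. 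On the right-hand side, $\tfrac{2}{\quantile}\smoothFG \to 2\smoothFG$ and $\sqrt{(1+\beta)/\quantile}\,\smoothBLin \to \smoothBLin$, while the statistical term is identically zero because $\sigma = 0$. Passing the inequality \eqref{S_AG_EG} to the limit yields exactly \eqref{AG_EG}.

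The one point that requires care is the interchange of the $(\quantile,\beta)$-limit with the algorithmic map: the bound \eqref{S_AG_EG} is only asserted for $\quantile \in (0,1)$, $\beta \in (0,\infty)$, and I must argue it persists on the boundary. This is routine here because $\Tholder$ is finite and fixed, so only finitely many continuous updates are composed and no uniformity in $\Tholder$ is needed; still, it is worth stating explicitly. An alternative, fully self-contained route --- which I would mention as a remark --- is to repeat the proof of Theorem~\ref{theo_S_AG_EG} from the outset with $\stdFG = \stdBLin = 0$: every martingale-difference and variance contribution in the one-epoch Lyapunov estimate drops, the conditional-expectation bookkeeping collapses, and one is left with a purely deterministic recursion whose telescoping over $t = 1,\dots,\Tholder$ gives \eqref{AG_EG} directly, with no limiting argument at all. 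I expect the limiting derivation to be the cleaner of the two and would present it as the main proof.
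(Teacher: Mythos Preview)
Your proposal is correct and follows precisely the route the paper indicates: the paper's own ``proof'' of Theorem~\ref{theo_AG_EG} is the single sentence ``by taking $\quantile\to 1^-$, $\beta\to 0^+$ in our analysis,'' and you have spelled this out carefully, including the continuity justification needed to pass from the $(\quantile,\beta)$-dependent stepsizes to the limiting choice $\stepsize_t = t/(2\smoothFG+\smoothBLin t)$. Your alternative route---rerunning the deterministic recursion with $\stdFG=\stdBLin=0$ and $\quantile=1$, $\beta=0$ from the start---is in fact closer to the literal phrasing ``in our analysis'' and avoids the (harmless but slightly fussy) limit interchange, so you may even prefer to present that as primary.
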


We make a few remarks on Theorems \ref{theo_S_AG_EG} and \ref{theo_AG_EG} as follows:

\begin{enumerate}[label=(\roman*),leftmargin=5mm]
\item
When $\smoothBLin$ is set as zero the problem is decoupled, and our algorithm for a single variate reduces to the standard three-line formulation of stochastic Nesterov's accelerated gradient descent, where the choice of $\alpha_t = \frac{2}{t+1}$ is essential to achieve desirable convergence behavior \citep{nesterov1983method}.
The step-size choice $\stepsize_t
=
\bar{\stepsize}_t(\sigma;\Tholder,\Cholder,\quantile,\beta)
$ as in \eqref{eq_stepsize} is directly generalized from the optimal choice in stochastic Nesterov’s method by incorporating the bilinear coupling term in its dominator;
we refer interested readers to \cite[Chap.~4]{LAN[First]} for a careful treatment.
Our hyperparameter dependency is in a fine-grained fashion; often, the convergence rate coefficients are not a concern, and the coarse choices of $\quantile = \frac12$ and $\beta=1$ should suffice.
In words, how $\quantile$ deviating from 1 and $\beta$ deviating from 0 should be a trade-off between the noise variance and the convergence rate coefficients.

\item
Compared with Theorem \ref{theo_S_AG_EG_Bilinear}, the nonasymptotic convergence rate in Theorem \ref{theo_S_AG_EG} is slowed down from $O(\frac{1}{\mathscr{T}})$ to $O(\frac{1}{\sqrt{\mathscr{T}}})$ in squared metric due to the nonlinear nature of our system.
As we will see immediately afterward, with the use of scheduled restarting the dependency on initialization will be exponential. 
Also we note that although in different settings, for the nonrandom Theorem \ref{theo_AG_EG} the stepsize choice is consistent with the choice in the bilinear game Theorem \ref{theo_S_AG_EG_Bilinear}.

\item
The choice of $\Cholder$ reflects the trade-offs between terms in our convergence rate bounds.
In the nonrandom setting the algorithm does not require any knowledge or estimate of the initial distance to a saddle to achieve the desirable rate.

\item
Suppose that randomness exists, the leading-order term on the right hand of \eqref{S_AG_EG} admits a hyperbolic dependency on $\Cholder$.
In the case that we are given the full knowledge of the initial distance to a saddle we can optimally choose $\Cholder = 1$ on the right-hand side of \eqref{S_AG_EG}.
In the alternative case where only an upper estimate $\Gamma_0$ of $\sqrt{\Exs\left[
\|\mathbf{x}_0 - \oholder_{\mathbf{x}}^\star\|^2
+
\ratio\|\mathbf{y}_0 - \oholder_{\mathbf{y}}^\star\|^2
\right]}$ is provided, we simply set $
\Cholder	=	\frac{\Gamma_0}{ 
\sqrt{\Exs\left[
\|\mathbf{x}_0 - \oholder_{\mathbf{x}}^\star\|^2
+
\ratio\|\mathbf{y}_0 - \oholder_{\mathbf{y}}^\star\|^2
\right]
}}\ge 1$ which yields the following bound to \eqref{S_AG_EG}
\beq\label{recrecursion}
\begin{aligned}
\lefteqn{
\Exs\left[
\|\mathbf{x}\avg{\Tholder-\frac12} - \oholder_{\mathbf{x}}^\star\|^2
+
\ratio\|\mathbf{y}\avg{\Tholder-\frac12} - \oholder_{\mathbf{y}}^\star\|^2
\right]
}
\\&\le
\tfrac{2}{\strcvx(\Tholder+1)}\left(
\frac{\tfrac{2}{\quantile}\smoothFG}{\Tholder}\Exs\left[
\|\mathbf{x}_0 - \oholder_{\mathbf{x}}^\star\|^2
+
\ratio\|\mathbf{y}_0 - \oholder_{\mathbf{y}}^\star\|^2
\right]
+
2\sqrt{\tfrac{1+\beta}{\quantile}} \smoothBLin\Gamma_0^2
\right)
+
\tfrac{4\sigma}{\strcvx\Tholder^{1/2}} \Gamma_0
\\&\le
\tfrac{2}{\strcvx(\Tholder+1)}\left(
\frac{\tfrac{2}{\quantile}\smoothFG}{\Tholder} + 2\sqrt{\tfrac{1+\beta}{\quantile}} \smoothBLin
\right)\Gamma_0^2
+
\tfrac{4\sigma}{\strcvx\Tholder^{1/2}} \Gamma_0
.
\end{aligned}\eeq
Note we used $\mathcal{A}(\tilde{\sigma};\Tholder,\Cholder,\quantile,\beta) \le 2\Cholder^2$.
Our upcoming scheduled-restarting analysis is heavily based on this bound.
\end{enumerate}

To prepare for our multi-epoch result with the help of scheduled restarting, analogous to Corollary \ref{theo_S_AG_EG_Bilinear_restart} we perform an induction based on \eqref{recrecursion}: suppose $
\Exs\left[
\|\mathbf{x}_0^{[\epoch-1]} - \oholder_{\mathbf{x}}^\star\|^2
+
\ratio\|\mathbf{y}_0^{[\epoch-1]} - \oholder_{\mathbf{y}}^\star\|^2
\right]
\le
\Gamma_0^2 e^{1-\epoch}
$ hold, and we obtain (by taking $\quantile = \frac12$ and $\beta = 1$ for simplicity)
$$\begin{aligned}
\Exs\left[
\|\mathbf{x}_0^{[\epoch]} - \oholder_{\mathbf{x}}^\star\|^2
+
\ratio\|\mathbf{y}_0^{[\epoch]} - \oholder_{\mathbf{y}}^\star\|^2
\right]
\lesssim
\tfrac{\smoothFG}{\strcvx \Tholder_{\epoch}^2} \Gamma_0^2 e^{1-\epoch}
+
\tfrac{\smoothBLin}{\strcvx \Tholder_{\epoch}} \Gamma_0^2 e^{1-\epoch}
+
\tfrac{\sigma}{\strcvx \Tholder_{\epoch}^{1/2}} \Gamma_0 e^{\frac{1-\epoch}{2}}
.
\end{aligned}$$
Setting the above display as $\le
\Gamma_0^2 e^{-\epoch}
$, and setting the length of epoch $s$ as $
\Tholder_{\epoch}
\asymp
\sqrt{\tfrac{\smoothFG}{\strcvx}}+ \tfrac{\smoothBLin}{\strcvx}
+ 
\tfrac{\sigma^2}{\strcvx^2 \Gamma_0^2 e^{1-\epoch}}
$ we arrive at a total complexity of
$$
\lesssim
\sum_{s=1}^{LOG}\left[
\sqrt{\tfrac{\smoothFG}{\strcvx}}+ \tfrac{\smoothBLin}{\strcvx}
+
\tfrac{\sigma^2}{\strcvx^2 \Gamma_0^2 e^{1-\epoch}}
\right]
=
\left(\sqrt{\tfrac{\smoothFG}{\strcvx}}+ \tfrac{\smoothBLin}{\strcvx}\right) LOG
+
\tfrac{\sigma^2}{\strcvx^2 \Gamma_0^2}\cdot \tfrac{e^{LOG}-1}{e-1}
,
$$
where $
LOG \equiv \left\lceil\log \tfrac{\Gamma_0^2}{\varepsilon^2}	\right\rceil
$, so it is bounded by a constant multiple of
$$
\left(\sqrt{\tfrac{\smoothFG}{\strcvx}}+ \tfrac{\smoothBLin}{\strcvx}\right) \left\lceil\log \tfrac{\Gamma_0^2}{\varepsilon^2}	\right\rceil
+
\tfrac{\sigma^2}{\strcvx^2 \Gamma_0^2} e^{\left\lceil\log \tfrac{\Gamma_0^2}{\varepsilon^2}\right\rceil}
\asymp
\left(\sqrt{\tfrac{\smoothFG}{\strcvx}}+ \tfrac{\smoothBLin}{\strcvx}\right) \left\lceil\log \tfrac{\Gamma_0^2}{\varepsilon^2}	\right\rceil
+
\tfrac{\sigma^2}{\strcvx^2\varepsilon^2}
.
$$
This yields the following multi-epoch iteration complexity bound result:

\begin{corollary}[Convergence of stochastic AG-EG with scheduled restarting]\label{theo_S_AG_EG_restart}
When a scheduled restarting argument is employed on top of Algorithm~\ref{algo_AG_EG}, with an epoch length $
\Tholder_s	\asymp	\sqrt{\tfrac{\smoothFG}{\strcvx}} + \tfrac{\smoothBLin}{\strcvx} + 
\tfrac{\sigma^2}{\strcvx^2 \Gamma_0^2 e^{1-\epoch}}
$ we obtain the iteration complexity of  
$$\begin{aligned}
O\left(
\left(\sqrt{\tfrac{\smoothFG}{\strcvx}} + \tfrac{\smoothBLin}{\strcvx}\right)\log\left(\tfrac{1}{\varepsilon}\right) 
+
\tfrac{\sigma^2}{\strcvx^2\varepsilon^2}
\right)
&=
O\left(
\left(
\sqrt{\tfrac{L_F}{\mu_F} \lor \tfrac{L_G}{\mu_G}} 
+ 
\sqrt{\tfrac{\lambda_{\max}(\mathbf{B}^\top \mathbf{B})}{\mu_F\mu_G}}
\right) \log\left(\tfrac{1}{\varepsilon}\right)
+
\tfrac{\sigma^2}{\mu_F^2\varepsilon^2}
\right)
.
\end{aligned}$$
\end{corollary}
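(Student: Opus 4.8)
The plan is to bootstrap the single-epoch guarantee of Theorem~\ref{theo_S_AG_EG} across the $\Sholder$ epochs of Algorithm~\ref{algo_AG_EG}, exploiting the warm-start rule of Line~\ref{lineoutput} so that the output of epoch $\epoch$ serves as the initialization of epoch $\epoch+1$. Fix $\quantile=\tfrac12$ and $\beta=1$ (any fixed interior choice works and only affects constants), let $\Gamma_0$ be an upper estimate of $\sqrt{\Exs[\|\mathbf{x}_0^{[0]}-\oholder_{\mathbf{x}}^\star\|^2+\ratio\|\mathbf{y}_0^{[0]}-\oholder_{\mathbf{y}}^\star\|^2]}$, and recall that choosing the input parameter $\Cholder$ equal to the ratio of the current estimate to the true initial distance makes $\Cholder\ge 1$ and yields the recursion~\eqref{recrecursion} with prefactor $\mathcal{A}\le 2\Cholder^2$.

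First I would set up an induction over epochs with hypothesis $\Exs[\|\mathbf{x}_0^{[\epoch-1]}-\oholder_{\mathbf{x}}^\star\|^2+\ratio\|\mathbf{y}_0^{[\epoch-1]}-\oholder_{\mathbf{y}}^\star\|^2]\le \Gamma_0^2 e^{1-\epoch}$; the base case $\epoch=1$ is the definition of $\Gamma_0$. For the inductive step I would re-invoke Theorem~\ref{theo_S_AG_EG} on epoch $\epoch$ with the current budget $\Gamma_0 e^{(1-\epoch)/2}$ in place of $\Gamma_0$ and the corresponding enlarged (hence $\ge 1$) choice of $\Cholder$, so that~\eqref{recrecursion} produces
\[
\Exs[\|\mathbf{x}_0^{[\epoch]}-\oholder_{\mathbf{x}}^\star\|^2+\ratio\|\mathbf{y}_0^{[\epoch]}-\oholder_{\mathbf{y}}^\star\|^2]
\lesssim
\tfrac{\smoothFG}{\strcvx\Tholder_{\epoch}^2}\Gamma_0^2 e^{1-\epoch}
+\tfrac{\smoothBLin}{\strcvx\Tholder_{\epoch}}\Gamma_0^2 e^{1-\epoch}
+\tfrac{\sigma}{\strcvx\Tholder_{\epoch}^{1/2}}\Gamma_0 e^{(1-\epoch)/2}.
\]
To force the right-hand side below $\Gamma_0^2 e^{-\epoch}$ I would take $\Tholder_{\epoch}\asymp\sqrt{\smoothFG/\strcvx}+\smoothBLin/\strcvx+\sigma^2 e^{\epoch-1}/(\strcvx^2\Gamma_0^2)$: the smooth and bilinear terms contract at the same geometric rate as the target, so a constant per-epoch length handles them, whereas the statistical term has a fixed size against an exponentially shrinking target, which is exactly why $\Tholder_{\epoch}$ must grow geometrically.

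Finally I would sum the epoch lengths. Running until $\Gamma_0^2 e^{-\epoch}\le\varepsilon^2$, i.e.\ for $\Sholder=\lceil\log(\Gamma_0^2/\varepsilon^2)\rceil$ epochs, the total iteration count is $\sum_{\epoch=1}^{\Sholder}\Tholder_{\epoch}\lesssim\Sholder\bigl(\sqrt{\smoothFG/\strcvx}+\smoothBLin/\strcvx\bigr)+\tfrac{\sigma^2}{\strcvx^2\Gamma_0^2}\sum_{\epoch=1}^{\Sholder}e^{\epoch-1}$; the geometric sum equals $\tfrac{e^{\Sholder}-1}{e-1}\lesssim e^{\Sholder}\lesssim\Gamma_0^2/\varepsilon^2$, so the statistical part collapses to $O(\sigma^2/(\strcvx^2\varepsilon^2))$, and with $\Sholder\asymp\log(1/\varepsilon)$ the total is $O\bigl((\sqrt{\smoothFG/\strcvx}+\smoothBLin/\strcvx)\log(1/\varepsilon)+\sigma^2/(\strcvx^2\varepsilon^2)\bigr)$. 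Substituting the rescaling identities~\eqref{params}, namely $\smoothFG/\strcvx=L_F/\mu_F\lor L_G/\mu_G$, $\smoothBLin/\strcvx=\sqrt{\lambda_{\max}(\mathbf{B}^\top\mathbf{B})/(\mu_F\mu_G)}$, and $\sigma^2/\strcvx^2=\sigma^2/\mu_F^2$, recovers the stated complexity. The main obstacle is the bookkeeping in the inductive step: one must check that re-invoking Theorem~\ref{theo_S_AG_EG} at each epoch with the rescaled budget and enlarged $\Cholder$ keeps $\mathcal{A}$ bounded by an absolute constant, so that all three error contributions decay at the advertised rates; the geometric growth of $\Tholder_{\epoch}$ for the noise term is then forced, and is precisely what upgrades the $O(1/\sqrt{\Tholder})$ single-epoch rate of Theorem~\ref{theo_S_AG_EG} into the optimal additive statistical term $\sigma^2/(\strcvx^2\varepsilon^2)$.
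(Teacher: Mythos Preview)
Your proposal is correct and follows essentially the same approach as the paper: fix $\quantile=\tfrac12$, $\beta=1$, set up the induction hypothesis $\Exs[\|\mathbf{x}_0^{[\epoch-1]}-\oholder_{\mathbf{x}}^\star\|^2+\ratio\|\mathbf{y}_0^{[\epoch-1]}-\oholder_{\mathbf{y}}^\star\|^2]\le\Gamma_0^2 e^{1-\epoch}$, apply~\eqref{recrecursion} at each epoch with the appropriately chosen $\Cholder$, pick $\Tholder_{\epoch}$ to force geometric decay, and sum the resulting epoch lengths over $\Sholder=\lceil\log(\Gamma_0^2/\varepsilon^2)\rceil$ epochs. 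The paper presents exactly this argument in the paragraph immediately preceding the corollary, with the same geometric sum $\tfrac{e^{\Sholder}-1}{e-1}$ for the noise contribution and the same substitution via~\eqref{params}.
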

In the nonrandom setting, the iteration complexity upper bound in Theorem \ref{theo_S_AG_EG_restart} matches the lower bound of \citet{zhang2021lower} $
\Omega\left(\left(
\sqrt{\frac{L_F}{\mu_F} \lor \frac{L_G}{\mu_G}} 
+ 
\sqrt{\frac{\lambda_{\max}(\mathbf{B}^\top \mathbf{B})}{\mu_F\mu_G}}
\right) \log\left(\frac{1}{\varepsilon}\right)\right)
$, and we achieve the optimal statistical rate $
\frac{\sigma^2}{\mu_F^2\varepsilon^2}
$ up to a constant prefactor.
Note that the hard instance constructed by \citet{zhang2021lower} has the form of a quadratic minimax game, and hence it is in a special case of a bilinearly-coupled saddle-point problem, and the same lower bound holds for problem \eqref{problemopt_stochastic} in this  case.

\pb\section{A direct approach for strongly-convex-strongly-concave objectives}\label{sec_AG_EG_Str}
For solving \eqref{problemopt_stochastic} we turn to our (AMP-inspired) stochastic AG-EG algorithm that targets strongly-convex-strongly-concave problems.
For $F(\mathbf{x})$ being $\mu_F$-strongly-convex and $G(\mathbf{y})$ being $\mu_G$-strongly-convex, by letting the algorithm be initialized at a fixed $[\mathbf{x}_0; \mathbf{y}_0]$ we group the objective in \eqref{problemopt_stochastic} as
\beq\label{problemopt_group}
\begin{aligned}
\mathscr{F}(\mathbf{x},\mathbf{y})
&=
\left(
F(\mathbf{x}) - \tfrac{\mu_F}{2}\|\mathbf{x}-\mathbf{x}_0\|^2 
\right)
+ 
\left(
\tfrac{\mu_F}{2}\|\mathbf{x}-\mathbf{x}_0\|^2 + H(\mathbf{x},\mathbf{y}) - \tfrac{\mu_G}{2}\|\mathbf{y}-\mathbf{y}_0\|^2
\right)
\\&\hspace{3in}
- 
\left(
G(\mathbf{y}) - \tfrac{\mu_G}{2}\|\mathbf{y}-\mathbf{y}_0\|^2 
\right)
,
\end{aligned}\eeq
where $
\frac{\mu_F}{2}\|\mathbf{x}-\mathbf{x}_0\|^2 + H(\mathbf{x},\mathbf{y}) - \frac{\mu_G}{2}\|\mathbf{y}-\mathbf{y}_0\|^2
$ is a $\mu_F$-strongly-convex-$\mu_G$-strongly-concave isotropic quadratic function.
Applying the updates in Lines~\ref{line_a} to \ref{line_h} in Algorithm \ref{algo_AG_EG} to the new grouping yields Algorithm \ref{algo_AG_EG_Str}, which resembles the algorithmic design of \citet{thekumparampil2022lifted}, except we are employing an extragradient method instead of the Chambolle-Pock-style primal-dual method as an approximation of proximal point methods.
We also redefine in this section our rescaling parameters as
\beq\label{params_group}
\smoothFG = L_F \lor \left( \tfrac{\mu_F}{\mu_G} L_G \right) - \mu_F
,\quad
\smoothBLin = \sqrt{\lambda_{\max}(\mathbf{B}^\top \mathbf{B})\cdot \tfrac{\mu_F}{\mu_G} + \mu_F^2}
,\quad
\mu_\star = \mu_F
,\quad
\ratio = \tfrac{\mu_G}{\mu_F}
.
\eeq
Our new result is as follows:

\begin{algorithm}[!tb]
\caption{Stochastic AcceleratedGradient-ExtraGradient (AG-EG) Descent-Ascent Algorithm, Direct Approach}
\begin{algorithmic}[1]
\REQUIRE 
Initialization $\mathbf{x}_0, \mathbf{y}_0$, total number of iterates $\Tholder$, step sizes $(\alpha_t, \stepsize_t: t=1,2,\dots)$, ratio of strong-convexity parameters $\ratio = \frac{\mu_G}{\mu_F}$
\STATE
Set $
\mathbf{x}\avg{-\frac12}\leftarrow \mathbf{x}_0
$, $
\mathbf{y}\avg{-\frac12}\leftarrow \mathbf{y}_0
$, $
\mathbf{x}\Xtrap{0}\leftarrow \mathbf{x}_0
$, $
\mathbf{y}\Xtrap{0}\leftarrow \mathbf{y}_0
$
\FOR{$t=1, 2,\dots,\Tholder$}
\STATE
Draw samples $\xi_{t-\frac12}\sim \mathcal{D}_\xi$ from oracle, and also $\zeta_{t-\frac12}, \zeta_t\sim \mathcal{D}_\zeta$ independently from oracle
\STATE\label{line_a_pri}
$
\mathbf{x}_{t-\frac12}	\leftarrow	\mathbf{x}_{t-1} - \stepsize_t\left(
\nabla f(\mathbf{x}\Xtrap{t-1}; \xi_{t-\frac12})
+
\nabla_{\mathbf{x}} h(\mathbf{x}_{t-1}, \mathbf{y}_{t-1}; \zeta_{t-\frac12}) - \mu_F(\mathbf{x}\Xtrap{t-1}-\mathbf{x}_{t-1})
\right)
$
\STATE\label{line_b_pri}
$
\mathbf{y}_{t-\frac12}	\leftarrow	\mathbf{y}_{t-1} - \tfrac{\stepsize_t}{\ratio}\left(
-\nabla_{\mathbf{y}} h(\mathbf{x}_{t-1}, \mathbf{y}_{t-1}; \zeta_{t-\frac12})
+
\nabla g(\mathbf{y}\Xtrap{t-1}; \xi_{t-\frac12}) - \mu_G(\mathbf{y}\Xtrap{t-1}-\mathbf{y}_{t-1})
\right)
$
\STATE\label{line_c_pri}
$
\mathbf{x}\avg{t-\frac12}	\leftarrow	(1-\alpha_t)\mathbf{x}\avg{t-\frac32} + \alpha_t\mathbf{x}_{t-\frac12}
$
\STATE\label{line_d_pri}
$
\mathbf{y}\avg{t-\frac12}	\leftarrow	(1-\alpha_t)\mathbf{y}\avg{t-\frac32} + \alpha_t\mathbf{y}_{t-\frac12}
$
\STATE\label{line_e_pri}
$
\mathbf{x}_t			\leftarrow	\mathbf{x}_{t-1} - \stepsize_t\left(
\nabla f(\mathbf{x}\Xtrap{t-1}; \xi_{t-\frac12})
+
\nabla_{\mathbf{x}} h(\mathbf{x}_{t-\frac12}, \mathbf{y}_{t-\frac12}; \zeta_t) - \mu_F(\mathbf{x}\Xtrap{t-1}-\mathbf{x}_{t-\frac12})
\right)
$
\STATE\label{line_f_pri}
$
\mathbf{y}_t			\leftarrow	\mathbf{y}_{t-1} - \tfrac{\stepsize_t}{\ratio}\left(
-\nabla_{\mathbf{y}} h(\mathbf{x}_{t-\frac12}, \mathbf{y}_{t-\frac12}; \zeta_t)
+
\nabla g(\mathbf{y}\Xtrap{t-1}; \xi_{t-\frac12}) - \mu_G(\mathbf{y}\Xtrap{t-1}-\mathbf{y}_{t-\frac12})
\right)
$
\STATE\label{line_g_pri}
$
\mathbf{x}\Xtrap{t}		\leftarrow	(1-\alpha_{t+1})\mathbf{x}\avg{t-\frac12} + \alpha_{t+1}\mathbf{x}_t
$
\STATE\label{line_h_pri}
$
\mathbf{y}\Xtrap{t}		\leftarrow	(1-\alpha_{t+1})\mathbf{y}\avg{t-\frac12} + \alpha_{t+1}\mathbf{y}_t
$
\ENDFOR
\STATE\label{lineoutput_pri}
{\bfseries Output:}
$[\mathbf{x}_{\Tholder}; \mathbf{y}_{\Tholder}]$
\end{algorithmic}
\label{algo_AG_EG_Str}
\end{algorithm}

\begin{theorem}[Convergence of stochastic AG-EG, direct approach]\label{theo_S_AG_EG_Str}
For solving problem \eqref{problemopt_group}, assume for each $\mathbf{x}\in \reals^n, \mathbf{y}\in \reals^m$ and $\xi\sim \mathcal{D}_\xi, \zeta\sim \mathcal{D}_\zeta$ that \eqref{noisevarone} and \eqref{noisevartwo} are satisfied.
Fix arbitrarily $\quantile\in (0,1)$, $\beta\in (0,\infty)$, set the rescaling parameters $\smoothFG$, $\smoothBLin$, $\strcvx$, $\ratio$ as in \eqref{params_group}, choose the stepsizes $\alpha_t\in \left(0, \bar{\alpha}(\quantile,\beta)\right]$ with
\beq\label{baralpha}
\bar{\alpha}(\quantile,\beta) 
\equiv 
\frac{\quantile}{1+\sqrt{1 + \quantile\left(\tfrac{\smoothFG}{\mu_\star} + \tfrac{(1+\beta)\smoothBLin^2}{\mu_\star^2}\right)}}
,
\eeq
as well as $\stepsize_t = \frac{\alpha_t}{\mu_\star}$.
Then the iterates of Algorithm \ref{algo_AG_EG_Str} satisfies for all $t=1,\dots,\Tholder$
\begin{equation}\label{S_AG_EG_Str}
\begin{aligned}
\lefteqn{
\Exs\left[
\|\mathbf{x}_t - \oholder_{\mathbf{x}}^\star\|^2
+
\ratio\|\mathbf{y}_t - \oholder_{\mathbf{y}}^\star\|^2
\right]
}
\\&\le
\left[
\|\mathbf{x}_0 - \oholder_{\mathbf{x}}^\star\|^2
+
\ratio\|\mathbf{y}_0 - \oholder_{\mathbf{y}}^\star\|^2
\right]
\left(\tfrac{\smoothFG}{\mu_\star}+1\right)\prod_{\tau=1}^t (1-\alpha_\tau)
+
\tfrac{3\sigma^2}{\mu_\star^2}\sum_{\tau=1}^t \alpha_\tau^2\prod_{\tau’=\tau+1}^t (1-\alpha_{\tau’})
,
\end{aligned}\end{equation}
where we inherit the notation $\sigma	=
\frac{1}{\sqrt{3}}\sqrt{
\tfrac{1}{1-\quantile}\stdFG^2	+	(2+\tfrac{1}{\beta})\stdBLin^2
}$ from Theorem \ref{theo_S_AG_EG}.
\end{theorem}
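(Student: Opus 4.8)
The plan is to reduce \eqref{S_AG_EG_Str} to a single one-step recursion and then unroll it. Write $\mathbf{z}_t = [\mathbf{x}_t;\mathbf{y}_t]$, $\mathbf{z}^\star = [\oholder_\mathbf{x}^\star;\oholder_\mathbf{y}^\star]$, $\Phi_t \equiv \|\mathbf{x}_t - \oholder_\mathbf{x}^\star\|^2 + \ratio\|\mathbf{y}_t - \oholder_\mathbf{y}^\star\|^2$, and let $\langle\cdot,\cdot\rangle_\ratio$ and $\|\cdot\|_\ratio$ be the $\ratio$-weighted inner product and norm on $\reals^{n+m}$ that weights the $\mathbf{y}$-block by $\ratio$ (so $\|\mathbf{z}_t - \mathbf{z}^\star\|_\ratio^2 = \Phi_t$), and let $\Exs_{t-1}[\cdot]$ denote conditional expectation given all randomness through iteration $t-1$. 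The right-hand side of \eqref{S_AG_EG_Str} is precisely the unrolling of a recursion
\[
\Exs_{t-1}\!\left[V_t\right]
\le
(1-\alpha_t)\,V_{t-1} + \tfrac{3\sigma^2}{\mu_\star^2}\,\alpha_t^2
,\qquad
V_0 = \left(\tfrac{\smoothFG}{\mu_\star}+1\right)\Phi_0,
\]
for a nonnegative Lyapunov function $V_t$ with $\Phi_t\le V_t$ almost surely: taking total expectations and iterating gives exactly the claimed $\prod(1-\alpha_\tau)$ bias term plus the $\frac{3\sigma^2}{\mu_\star^2}\sum\alpha_\tau^2\prod(1-\alpha_{\tau'})$ variance term. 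Thus the whole task is to build $V_t$ and prove the one-step bound; the prefactor $\frac{\smoothFG}{\mu_\star}+1$ at $t=0$ appears because $\mathbf{x}\avg{-\frac12},\mathbf{x}_0,\mathbf{x}\Xtrap{0}$ (and their $\mathbf{y}$-counterparts) all coincide with $[\mathbf{x}_0;\mathbf{y}_0]$, so the ``gap part'' of $V_0$ is controlled by $(L_F-\mu_F)$- and $(L_G-\mu_G)$-smoothness, i.e.\ by $\frac{\smoothFG}{\mu_\star}\Phi_0$, on top of the distance part $\Phi_0$.

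\emph{Identifying the mechanism.} Under the grouping \eqref{problemopt_group}, Algorithm~\ref{algo_AG_EG_Str} is running AG-EG with Nesterov block $\tilde F(\mathbf{x})\equiv F(\mathbf{x})-\frac{\mu_F}{2}\|\mathbf{x}-\mathbf{x}_0\|^2$ and $\tilde G(\mathbf{y})\equiv G(\mathbf{y})-\frac{\mu_G}{2}\|\mathbf{y}-\mathbf{y}_0\|^2$ (both convex, $(L_F-\mu_F)$- resp.\ $(L_G-\mu_G)$-smooth), and extragradient block the isotropic quadratic $\Theta(\mathbf{x},\mathbf{y})\equiv\frac{\mu_F}{2}\|\mathbf{x}-\mathbf{x}_0\|^2+\mathbf{x}^\top\mathbf{B}\mathbf{y}-\mathbf{x}^\top\mathbf{u}_\mathbf{x}+\mathbf{u}_\mathbf{y}^\top\mathbf{y}-\frac{\mu_G}{2}\|\mathbf{y}-\mathbf{y}_0\|^2$. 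The saddle-gradient operator of $\Theta$ is affine; after the $\frac1\ratio$-reweighting of the $\mathbf{y}$-update (Lines~\ref{line_b_pri} and~\ref{line_f_pri}) it is $\mu_\star$-strongly monotone in $\|\cdot\|_\ratio$ and has bilinear Lipschitz constant exactly $\smoothBLin=\sqrt{\lambda_{\max}(\mathbf{B}^\top\mathbf{B})\cdot\frac{\mu_F}{\mu_G}+\mu_F^2}$, where $\mu_F^2$ is the contribution of the two added quadratics. With this reading the one-step argument mirrors the analysis behind Theorem~\ref{theo_S_AG_EG}, to which I would appeal for the bulk of the algebra, with two harmless changes: the Nesterov block is only convex (strong convexity of $F,G$ was never used there), and the linear-rate contraction now comes entirely from $\Theta$ rather than from $F,G$.

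\emph{The one-step estimate.} Take $V_t$ to be $\Phi_t$ plus a weighted ``$(\tilde F,\tilde G)$-gap'' term of the form $\frac{c}{\mu_\star\alpha_{t+1}}\big(\tilde F(\mathbf{x}\avg{t-\frac12})+\tilde G(\mathbf{y}\avg{t-\frac12})-\tilde F(\oholder_\mathbf{x}^\star)-\tilde G(\oholder_\mathbf{y}^\star)-\langle\nabla\tilde F(\oholder_\mathbf{x}^\star),\mathbf{x}\avg{t-\frac12}-\oholder_\mathbf{x}^\star\rangle-\langle\nabla\tilde G(\oholder_\mathbf{y}^\star),\mathbf{y}\avg{t-\frac12}-\oholder_\mathbf{y}^\star\rangle\big)$ for a suitable constant $c$. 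Then: (i) apply the extragradient/prox inequality to the updates producing $[\mathbf{x}_{t-\frac12};\mathbf{y}_{t-\frac12}]$ and $[\mathbf{x}_t;\mathbf{y}_t]$, converting $\langle\widehat{\nabla\Theta}(\mathbf{z}_{t-\frac12}),\mathbf{z}_{t-\frac12}-\mathbf{z}^\star\rangle_\ratio$ into $\frac{1}{2\stepsize_t}\big(\|\mathbf{z}_{t-1}-\mathbf{z}^\star\|_\ratio^2-\|\mathbf{z}_t-\mathbf{z}^\star\|_\ratio^2\big)-\frac{1}{2\stepsize_t}\|\mathbf{z}_{t-1}-\mathbf{z}_{t-\frac12}\|_\ratio^2$, plus an extrapolation error bounded by $\smoothBLin\|\mathbf{z}_{t-1}-\mathbf{z}_{t-\frac12}\|_\ratio\,\|\mathbf{z}_{t-\frac12}-\mathbf{z}_t\|_\ratio$, plus martingale-difference and variance terms from the oracle draws $\xi_{t-\frac12},\zeta_{t-\frac12},\zeta_t$; (ii) use $\mu_\star$-strong monotonicity of $\nabla\Theta$ to extract a $-\mu_\star\|\mathbf{z}_{t-\frac12}-\mathbf{z}^\star\|_\ratio^2$ term, which becomes the $1-\alpha_t$ factor once $\stepsize_t=\alpha_t/\mu_\star$ is substituted; (iii) use convexity and smoothness of $\tilde F,\tilde G$ together with the averaging/extrapolation identities of Lines~\ref{line_c_pri}--\ref{line_h_pri}---here it matters that $\nabla f,\nabla g$ are evaluated at the single extrapolation point $\mathbf{x}\Xtrap{t-1},\mathbf{y}\Xtrap{t-1}$ with the single sample $\xi_{t-\frac12}$, which preserves the Nesterov structure---to telescope the $(\tilde F,\tilde G)$-gap of $V_{t-1}$ into that of $V_t$; (iv) close with the saddle optimality $\langle\nabla\mathscr{F}(\mathbf{z}^\star),\mathbf{z}_{t-\frac12}-\mathbf{z}^\star\rangle_\ratio\ge 0$.

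\emph{Absorbing errors, the stepsize ceiling, and the obstacle.} Taking $\Exs_{t-1}[\cdot]$ kills the martingale-difference terms---this is where $\zeta_{t-\frac12}$ and $\zeta_t$ being drawn \emph{independently} rather than as a shared sample is used---and \eqref{noisevarone}--\eqref{noisevartwo} bound the surviving variances by $\stdFG^2$ and $\stdBLin^2$. A Young's inequality with weight controlled by $\quantile$ absorbs the $\stdFG$-noise cross terms against the favorable $-\frac{1}{2\stepsize_t}\|\mathbf{z}_{t-1}-\mathbf{z}_{t-\frac12}\|_\ratio^2$, leaving a residue proportional to $\frac{1}{1-\quantile}\stdFG^2$; a second Young split with weight controlled by $\beta$ trades the extrapolation error (and its noisy analogue) against $\|\mathbf{z}_{t-1}-\mathbf{z}_{t-\frac12}\|_\ratio^2$ and $\|\mathbf{z}_{t-\frac12}-\mathbf{z}_t\|_\ratio^2$, leaving a residue proportional to $(2+\frac1\beta)\stdBLin^2$; together these assemble into $\frac{3\sigma^2}{\mu_\star^2}\alpha_t^2$ with $\sigma$ as stated. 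For the \emph{deterministic} error coefficients to be dominated---so that the leftover multiple of $\|\mathbf{z}_{t-1}-\mathbf{z}_{t-\frac12}\|_\ratio^2$ remains nonpositive and the recursion closes with contraction factor exactly $1-\alpha_t$---one is left with a scalar condition of the shape $\big(\tfrac{\smoothFG}{\mu_\star}+\tfrac{(1+\beta)\smoothBLin^2}{\mu_\star^2}\big)\alpha_t^2 + \tfrac{2}{\quantile}\alpha_t\le\tfrac{2}{\quantile}$ (up to the exact constants recovered in the computation); solving the corresponding quadratic in $\alpha_t$ yields precisely the ceiling $\bar\alpha(\quantile,\beta)$ of \eqref{baralpha}. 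Unrolling the recursion and using $\Phi_t\le V_t$ then delivers \eqref{S_AG_EG_Str}. I expect the main obstacle to be step~(iii) combined with the choice of the constant $c$ in $V_t$: one must set the weight on the $(\tilde F,\tilde G)$-gap relative to the distance term so that the Nesterov telescoping and the extragradient contraction close \emph{simultaneously} under the single multiplier $1-\alpha_t$, rather than producing two incompatible step-size requirements---this compatibility is the delicate point, and it is why the cleanest route is to organize the proof as a reduction to Theorem~\ref{theo_S_AG_EG} under the reparametrization \eqref{params_group}.
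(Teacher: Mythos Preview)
Your proposal is essentially the paper's own argument: the same regrouping \eqref{problemopt_group}, the same Lyapunov combining the $(\tilde F,\tilde G)$-gap $\Vquantity(\mathbf{x}\avg{t-\frac12},\mathbf{y}\avg{t-\frac12}\mid\oholder_\mathbf{x}^\star,\oholder_\mathbf{y}^\star)$ with the distance $\frac{\mu_\star}{2}\distancemetric(\mathbf{x}_t,\mathbf{y}_t;\oholder_\mathbf{x}^\star,\oholder_\mathbf{y}^\star)$, the same reuse of the extragradient inequality \eqref{keyeq}, the same Young splits with parameters $\quantile,\beta$ producing $\sigma$, and the same quadratic stepsize condition (in the paper it comes out as $\quantile-2\alpha_t-\big(\tfrac{\smoothFG}{\mu_\star}+\tfrac{(1+\beta)\smoothBLin^2}{\mu_\star^2}\big)\alpha_t^2\ge 0$, whose positive root is exactly $\bar\alpha(\quantile,\beta)$). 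The only place you overcomplicate things is the Lyapunov weight: the paper uses a \emph{fixed} weight (gap term with coefficient $1$, distance with coefficient $\tfrac{\mu_\star}{2}$, equivalently $c=2$ independent of $t$), so the ``obstacle'' you flag in step~(iii)---choosing $c$ so that Nesterov telescoping and extragradient contraction close under a common $1-\alpha_t$---evaporates once $\stepsize_t=\alpha_t/\mu_\star$ is plugged in, and no $\alpha_{t+1}$-dependent weighting is needed.
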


The proof of Theorem \ref{theo_S_AG_EG_Str} is provided in \S\ref{sec_proof,theo_S_AG_EG_Str}.
We highlight that our result applies to the output in Line \ref{lineoutput_pri} as $\mathbf{x}_{\Tholder}$, $\mathbf{y}_{\Tholder}$ instead of $\mathbf{x}\avg{\Tholder-\frac12}$, $\mathbf{y}\avg{\Tholder-\frac12}$ as in Line \ref{lineoutput} of Algorithm \ref{algo_AG_EG}.
Additionally, let the total number of iterates $\Tholder\ge 1$ be known in advance, and consider a constant stepsize $\alpha_t\equiv \alpha$. Optimizing the error bound over $\alpha$ gives
$$
\alpha
=
\tfrac{1}{\Tholder}\left(1 + \log\left(
\left[
\|\mathbf{x}_0 - \oholder_{\mathbf{x}}^\star\|^2
+
\ratio\|\mathbf{y}_0 - \oholder_{\mathbf{y}}^\star\|^2
\right]
\left(\tfrac{\smoothFG}{\mu_\star}+1\right)
\cdot
\tfrac{\mu_\star^2\Tholder}{3\sigma^2}
\right)\right)
\,\land
\bar{\alpha}(\quantile,\beta)
,
$$
and hence \eqref{S_AG_EG_Str} leads to the following:
$$\begin{aligned}
&
\Exs\left[
\|\mathbf{x}_\Tholder - \oholder_{\mathbf{x}}^\star\|^2
+
\ratio\|\mathbf{y}_\Tholder - \oholder_{\mathbf{y}}^\star\|^2
\right]
\le
\left[
\|\mathbf{x}_0 - \oholder_{\mathbf{x}}^\star\|^2
+
\ratio\|\mathbf{y}_0 - \oholder_{\mathbf{y}}^\star\|^2
\right]
\left(\tfrac{\smoothFG}{\mu_\star}+1\right) e^{-\bar{\alpha}(\quantile,\beta)\Tholder}
\\&\hspace{1.5in}\,
+
\tfrac{3\sigma^2}{\mu_\star^2\Tholder}\left(1 + \log\left(
\left[
\|\mathbf{x}_0 - \oholder_{\mathbf{x}}^\star\|^2
+
\ratio\|\mathbf{y}_0 - \oholder_{\mathbf{y}}^\star\|^2
\right]
\left(\tfrac{\smoothFG}{\mu_\star}+1\right)
\cdot
\tfrac{\mu_\star^2\Tholder}{3\sigma^2}
\right)\right)
.
\end{aligned}$$
Prescribing the desired accuracy $\varepsilon > 0$, Theorem \ref{theo_S_AG_EG_Str} shows that the iteration complexity to output an iterate $\mathbf{x}_\Tholder \in \reals^n$, $\mathbf{y}_\Tholder \in \reals^m$ that satisfies $\Exs[
\|\mathbf{x}_\Tholder - \oholder_{\mathbf{x}}^\star\|^2
+
\ratio\|\mathbf{y}_\Tholder - \oholder_{\mathbf{y}}^\star\|^2
]	\le	\varepsilon^2$ is upper bounded by a constant multiple of%
\footnote{Throughout this work, we focus on the iteration complexity whereas the required number of queries to stochastic gradient oracle is three times the iteration complexity (one query to $[\nabla f(\mathbf{x}; \xi); \nabla g(\mathbf{y}; \xi)]$ and two queries to $\nabla h(\mathbf{x}, \mathbf{y}; \zeta)$).}
$$
\left(
\sqrt{\tfrac{\smoothFG}{\mu_\star}} 
+ 
\tfrac{\smoothBLin}{\mu_\star}
+
\tfrac{\sigma^2}{\mu_\star^2\varepsilon^2}
\right) \log\Big((\tfrac{\smoothFG}{\mu_\star}+1)\tfrac{1}{\varepsilon}\Big)
=
\Big(
\sqrt{\tfrac{L_F}{\mu_F} \lor \tfrac{L_G}{\mu_G}} 
+ 
\sqrt{\tfrac{\lambda_{\max}(\mathbf{B}^\top \mathbf{B})}{\mu_F\mu_G}}
+
\tfrac{\sigma^2}{\mu_F^2\varepsilon^2}
\Big) \log\Big((\tfrac{L_F}{\mu_F} \lor \tfrac{L_G}{\mu_G})\tfrac{1}{\varepsilon}\Big)
.
$$
Compared to the stochastic AG-EG with restarting in Theorem \ref{theo_S_AG_EG_restart}, we see that there is a multiplicative $\tfrac{L_F}{\mu_F} \lor \tfrac{L_G}{\mu_G}$ term inside the logarithmic factor.
We believe that the extra logarithmic factor on the optimal statistical rate $\tfrac{\sigma^2}{\mu_\star^2\varepsilon^2}$ is removable using a proper diminishing stepsize strategy, a possibility that we reserve for future study.

Analogous to Theorem \ref{theo_AG_EG} in the case of no stochasticity, setting $\quantile\to 1^-$, $\beta\to 0^+$ gives us the following convergence rate which matches the \citet{zhang2021lower} lower bound:

\begin{theorem}[Convergence of AG-EG, direct approach]\label{theo_AG_EG_Str}
Suppose we are in the setting of Theorem \ref{theo_S_AG_EG_Str} with no stochasticity.
We have by choosing $\alpha_t \equiv \bar{\alpha}(1,0)$ defined as in \eqref{baralpha} as well as $\stepsize_t \equiv \frac{\bar{\alpha}(1,0)}{\mu_\star}$, the output of Algorithm~\ref{algo_AG_EG_Str} satisfies
\begin{equation}\label{AG_EG_Str}
\begin{aligned}
&
\|\mathbf{x}_\Tholder - \oholder_{\mathbf{x}}^\star\|^2
+
\ratio\|\mathbf{y}_\Tholder - \oholder_{\mathbf{y}}^\star\|^2
\\&\le
\left[
\|\mathbf{x}_0 - \oholder_{\mathbf{x}}^\star\|^2
+
\ratio\|\mathbf{y}_0 - \oholder_{\mathbf{y}}^\star\|^2
\right]
\left(\tfrac{\smoothFG}{\mu_\star}+1\right) \exp\left(
- \frac{\Tholder}{1+\sqrt{1 + \tfrac{\smoothFG}{\mu_\star} + \tfrac{\smoothBLin^2}{\mu_\star^2}}}
\right)
.
\end{aligned}\end{equation}
\end{theorem}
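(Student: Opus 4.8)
\emph{Proof plan.} The plan is to obtain Theorem~\ref{theo_AG_EG_Str} as the zero-noise specialization of Theorem~\ref{theo_S_AG_EG_Str}, after upgrading the admissible parameter range to the boundary values $\quantile=1$, $\beta=0$. First I would record that in the nonstochastic regime the variance bounds \eqref{noisevarone}--\eqref{noisevartwo} hold with $\stdFG=\stdBLin=0$, so the composite noise parameter $\sigma=\tfrac{1}{\sqrt 3}\sqrt{\tfrac{1}{1-\quantile}\stdFG^2+(2+\tfrac1\beta)\stdBLin^2}$ inherited from Theorem~\ref{theo_S_AG_EG_Str} equals $0$ for every admissible $(\quantile,\beta)$. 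Hence the second (statistical) summand on the right-hand side of \eqref{S_AG_EG_Str} vanishes, and for any $\quantile\in(0,1)$, $\beta\in(0,\infty)$ and constant stepsize $\alpha_t\equiv\alpha\le\bar\alpha(\quantile,\beta)$ with $\stepsize_t=\alpha/\mu_\star$, Theorem~\ref{theo_S_AG_EG_Str} gives
\[
\|\mathbf{x}_\Tholder-\oholder_{\mathbf{x}}^\star\|^2+\ratio\|\mathbf{y}_\Tholder-\oholder_{\mathbf{y}}^\star\|^2
\le
\bigl[\|\mathbf{x}_0-\oholder_{\mathbf{x}}^\star\|^2+\ratio\|\mathbf{y}_0-\oholder_{\mathbf{y}}^\star\|^2\bigr]\bigl(\tfrac{\smoothFG}{\mu_\star}+1\bigr)(1-\alpha)^\Tholder .
\]

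Next I would pass to the boundary stepsize. The cleanest route is a continuity argument: with the (deterministic) first-order oracles fixed, each update in Lines~\ref{line_a_pri}--\ref{line_h_pri} of Algorithm~\ref{algo_AG_EG_Str} is a continuous function of $\alpha$, so the terminal iterate $[\mathbf{x}_\Tholder;\mathbf{y}_\Tholder]$, and hence both sides of the displayed bound, depend continuously on $\alpha$. From \eqref{baralpha}, $\bar\alpha(\quantile,\beta)=\quantile/\bigl(1+\sqrt{1+\quantile(\smoothFG/\mu_\star+(1+\beta)\smoothBLin^2/\mu_\star^2)}\bigr)$ is nondecreasing in $\quantile$ and nonincreasing in $\beta$, and $\bar\alpha(\quantile,\beta)\uparrow\bar\alpha(1,0)=1/\bigl(1+\sqrt{1+\smoothFG/\mu_\star+\smoothBLin^2/\mu_\star^2}\bigr)$ as $\quantile\to1^-$, $\beta\to0^+$. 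Taking a sequence $(\quantile_k,\beta_k)\to(1,0)$ and $\alpha_k=\bar\alpha(\quantile_k,\beta_k)$, the displayed bound holds for every $k$; letting $k\to\infty$ and invoking the continuity just noted yields the same bound with $\alpha=\bar\alpha(1,0)$ and $\stepsize_t\equiv\bar\alpha(1,0)/\mu_\star$. (Equivalently, one may inspect the proof of Theorem~\ref{theo_S_AG_EG_Str} and observe that the restrictions $\quantile<1$, $\beta>0$ are invoked only to split the stochastic cross terms via Young's inequality; with $\sigma=0$ these splittings are vacuous, so $\quantile=1$, $\beta=0$ are directly permissible.)

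Finally I would simplify: with $\alpha_t\equiv\bar\alpha(1,0)$ the factor $(1-\bar\alpha(1,0))^\Tholder$ is bounded, via $1-x\le e^{-x}$, by $\exp(-\bar\alpha(1,0)\,\Tholder)=\exp\!\bigl(-\Tholder/(1+\sqrt{1+\smoothFG/\mu_\star+\smoothBLin^2/\mu_\star^2})\bigr)$, which is exactly the right-hand side of \eqref{AG_EG_Str}. I do not expect a genuine obstacle: the only delicate point is the legitimacy of the boundary stepsize $\bar\alpha(1,0)$, settled by the limiting argument (or a one-line inspection of the proof of Theorem~\ref{theo_S_AG_EG_Str}); everything else is bookkeeping inherited from that theorem.
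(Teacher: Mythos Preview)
Your proposal is correct and follows essentially the same approach as the paper: the paper simply remarks that Theorem~\ref{theo_AG_EG_Str} is obtained from Theorem~\ref{theo_S_AG_EG_Str} ``in the case of no stochasticity, setting $\quantile\to 1^-$, $\beta\to 0^+$,'' and your write-up spells out the two ways to justify the boundary values (continuity of the iterates in $\alpha$, or direct inspection that $\quantile<1$ and $\beta>0$ enter the proof only through the Young-inequality splittings of the noise cross terms, which are vacuous when $\stdFG=\stdBLin=0$). The final step $(1-\bar\alpha(1,0))^\Tholder\le e^{-\bar\alpha(1,0)\Tholder}$ is exactly what is needed to land on \eqref{AG_EG_Str}.
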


We end this section by remarking that in the nonrandom Theorem \ref{theo_AG_EG_Str}, this convergence rate upper bound indicates a near-unity coefficient on its condition-number exponent, yielding an iteration complexity that is asymptotically
$$
\left(
1+\sqrt{
1+\tfrac{\smoothFG}{\mu_\star}
+
\tfrac{\smoothBLin^2}{\mu_\star^2}
}
\right) \log\left((\tfrac{\smoothFG}{\mu_\star}+1)\tfrac{1}{\varepsilon}\right)
\sim
\left(\sqrt{
\tfrac{L_F}{\mu_F} \lor \tfrac{L_G}{\mu_G}
+
\tfrac{\lambda_{\max}(\mathbf{B}^\top \mathbf{B})}{\mu_F\mu_G}
}\right) \log\left((\tfrac{L_F}{\mu_F} \lor \tfrac{L_G}{\mu_G})\tfrac{1}{\varepsilon}\right)
,
$$
which is sharper in its prefactor than the restarting iteration complexity result in Corollary \ref{theo_S_AG_EG_restart} in \S\ref{sec_AG_EG}.
Nevertheless in the bilinear game case without scheduled restarting,%
\footnote{With some effort one can generalize the argument of \citet{chen2017accelerated} to the case of a strongly monotone operator, yet a projection step is still necessary, without which a scheduled restarting argument leads to an extra multiplicative logarithmic factor in condition number in its iteration complexity.}
our direct approach in Algorithm~\ref{algo_AG_EG_Str} reduces to a last-iterate independent-sample stochastic extragradient algorithm whose bias term does \emph{not} match the \citet{ibrahim2020linear} lower bound, and it suffers from non-convergence behavior for the bounded stochastic case \citep{hsieh2020explore}.

\pb\section{Discussion}\label{sec_discuss}
We have presented a stochastic extragradient-based acceleration algorithm, AG-EG, for solving the bilinearly-coupled saddle-point problem \eqref{problemopt_stochastic} that simultaneously matches lower bounds due to \citet{zhang2021lower} and \citet{ibrahim2020linear} for strongly-convex-strongly-concave and bilinear games, respectively.
To the best of our knowledge, this is the first time that both lower bounds have been met by a single algorithm.
There are some remaining issues to be addressed, however, including the case of one-sided non-strong convexity, the setting of unbounded noise variance, and the characterization of the full parameter regime dependency on $\lambda_{\min}(\mathbf{B}\mathbf{B}^\top)$.  These are left as important directions for future research.

\section*{Acknowledgements}
This work is supported in part by NSF Award’s IIS-2110170 and DMS-2134106 to SSD, by Canada CIFAR AI Chair to GG, by the Mathematical Data Science program of the Office of Naval Research under grant number N00014-18-1-2764 and also the Vannevar Bush Faculty Fellowship program under grant number N00014-21-1-2941 and NSF grant IIS-1901252 to MIJ.

\bibliographystyle{plainnat}
\bibliography{SAILreference}

\pb\section{Proofs of main results}\label{sec_proof}
In this section we present the proofs of our main results.
\S\ref{sec_scalingreduction} illustrates the scaling reduction argument.
\S\ref{sec_aux_lemma} provides auxiliary lemmas.
With a slight adjustment of their presentation order \S\ref{sec_proof,theo_S_AG_EG} proves Theorem \ref{theo_S_AG_EG}, \S\ref{sec_proof,theo_S_AG_EG_Str} proves Theorem \ref{theo_S_AG_EG_Str} and finally \S\ref{sec_proof,theo_S_AG_EG_Bilinear} proves Theorem \ref{theo_S_AG_EG_Bilinear}.

\pb\subsection{Scaling reduction argument}\label{sec_scalingreduction}
Here we illustrate the scaling reduction argument that reduces our analysis of our AG-EG Algorithm \ref{algo_AG_EG} to the one with equal strong-convexity parameters of $F$ and $G$ using a reparametrized objective function; the same argument applies to Algorithm \ref{algo_AG_EG_Str} and we omit the details.
The idea is in fact analogous to mirror descent-ascent with respect to a Bregman divergence, and our goal here is to detail this argument for our analysis.

In lieu to \eqref{problemopt_stochastic} we consider
$$
\min_{\hat{\mathbf{x}}} \max_{\hat{\mathbf{y}}}~
\hat{\mathscr{F}}(\hat{\mathbf{x}}, \hat{\mathbf{y}})
=
F(\hat{\mathbf{x}}) + \hat{H}(\hat{\mathbf{x}},\hat{\mathbf{y}}) - \hat{G}(\hat{\mathbf{y}})
,
$$
where we have $\hat{\mathscr{F}}(\hat{\mathbf{x}}, \hat{\mathbf{y}})	=	\mathscr{F}(\mathbf{x}, \mathbf{y})$ with the symbolic reparametrization $
\hat{\mathbf{x}} = \mathbf{x}
$, $
\hat{\mathbf{y}} = \sqrt{\frac{\mu_G}{\mu_F}} \mathbf{y}
$, $
\hat{H}(\hat{\mathbf{x}},\hat{\mathbf{y}}) = H(\mathbf{x},\mathbf{y})
$, $
\hat{h}(\hat{\mathbf{x}},\hat{\mathbf{y}};\zeta) = h(\mathbf{x},\mathbf{y};\zeta)
$, $
\hat{G}(\hat{\mathbf{y}}) = G(\mathbf{y})
$, $
\hat{g}(\hat{\mathbf{y}};\xi) = g(\mathbf{y};\xi)
$ and also their derivatives
$$
\nabla_{\hat{\mathbf{y}}}\hat{H}(\hat{\mathbf{x}},\hat{\mathbf{y}}) = \sqrt{\frac{\mu_F}{\mu_G}} \nabla_{\mathbf{y}}H(\mathbf{x},\mathbf{y})
,\qquad
\nabla_{\hat{\mathbf{y}}}\hat{h}(\hat{\mathbf{x}},\hat{\mathbf{y}};\zeta) = \sqrt{\frac{\mu_F}{\mu_G}} \nabla_{\mathbf{y}}h(\mathbf{x},\mathbf{y};\zeta)
,
$$
and
$$
\nabla\hat{G}(\hat{\mathbf{y}}) = \sqrt{\frac{\mu_F}{\mu_G}} \nabla G(\mathbf{y})
,\qquad
\nabla\hat{g}(\hat{\mathbf{y}};\xi) = \sqrt{\frac{\mu_F}{\mu_G}} \nabla g(\mathbf{y};\xi)
.
$$
It is straightforward to verify $\hat{\mathscr{F}}(\hat{\mathbf{x}}, \hat{\mathbf{y}})$ is arguably $\strcvx$-strongly-convex-$\strcvx$-strongly-concave.
The essence of our update rules is captured by 8 lines corresponding to Lines~\ref{line_a}--\ref{line_h} in Algorithm \ref{algo_AG_EG}, which becomes:
\begin{subequations}
\begin{align}
\hat{\mathbf{x}}_{t-\frac12}	&=	\hat{\mathbf{x}}_{t-1} - \stepsize_t \left( 
\nabla f(\hat{\mathbf{x}}\Xtrap{t-1}; \xi_{t-\frac12}) + \nabla_{\hat{\mathbf{x}}} h(\hat{\mathbf{x}}_{t-1}, \hat{\mathbf{y}}_{t-1}; \zeta_{t-\frac12})
\right) 
,\label{eq_line_a}
\\
\hat{\mathbf{y}}_{t-\frac12}	&=	\hat{\mathbf{y}}_{t-1} - \stepsize_t \left( 
-\nabla_{\hat{\mathbf{y}}} h(\hat{\mathbf{x}}_{t-1}, \hat{\mathbf{y}}_{t-1}; \zeta_{t-\frac12}) + \nabla g(\hat{\mathbf{y}}\Xtrap{t-1}; \xi_{t-\frac12})
\right) 
,\label{eq_line_b}
\\
\hat{\mathbf{x}}\avg{t-\frac12}		&=	(1-\alpha_t)\hat{\mathbf{x}}\avg{t-\frac32} + \alpha_t\hat{\mathbf{x}}_{t-\frac12}
,\label{eq_line_c}
\\
\hat{\mathbf{y}}\avg{t-\frac12}		&=	(1-\alpha_t)\hat{\mathbf{y}}\avg{t-\frac32} + \alpha_t\hat{\mathbf{y}}_{t-\frac12}
,\label{eq_line_d}
\\
\hat{\mathbf{x}}_t			&=	\hat{\mathbf{x}}_{t-1} - \stepsize_t \left( 
\nabla f(\hat{\mathbf{x}}\Xtrap{t-1}; \xi_{t-\frac12}) + \nabla_{\hat{\mathbf{x}}} h(\hat{\mathbf{x}}_{t-\frac12}, \hat{\mathbf{y}}_{t-\frac12}; \zeta_t)
\right) 
,\label{eq_line_e}
\\
\hat{\mathbf{y}}_t			&=	\hat{\mathbf{y}}_{t-1} - \stepsize_t \left( 
-\nabla_{\hat{\mathbf{y}}} h(\hat{\mathbf{x}}_{t-\frac12}, \hat{\mathbf{y}}_{t-\frac12}; \zeta_t) + \nabla g(\hat{\mathbf{y}}\Xtrap{t-1}; \xi_{t-\frac12})
\right) 
,\label{eq_line_f}
\\
\hat{\mathbf{x}}\Xtrap{t}		&=	(1-\alpha_{t+1})\hat{\mathbf{x}}\avg{t-\frac12} + \alpha_{t+1}\hat{\mathbf{x}}_t
,\label{eq_line_g}
\\
\hat{\mathbf{y}}\Xtrap{t}		&=	(1-\alpha_{t+1})\hat{\mathbf{y}}\avg{t-\frac12} + \alpha_{t+1}\hat{\mathbf{y}}_t
.\label{eq_line_h}
\end{align}
\end{subequations}
It is obvious to translate Eqs.~\eqref{eq_line_c}, \eqref{eq_line_d}, \eqref{eq_line_g}, \eqref{eq_line_h} into Lines~\ref{line_c}, \ref{line_d}, \ref{line_g}, \ref{line_h}, separately.
The rest translations are also straightforward, represented by Eqs.~\eqref{eq_line_a} into Line~\ref{line_a}
$$\begin{aligned}
&
\hat{\mathbf{x}}_{t-\frac12}	=	\hat{\mathbf{x}}_{t-1} - \stepsize_t \left( 
\nabla f(\hat{\mathbf{x}}\Xtrap{t-1}; \xi_{t-\frac12}) + \nabla_{\hat{\mathbf{x}}} h(\hat{\mathbf{x}}_{t-1}, \hat{\mathbf{y}}_{t-1}; \zeta_{t-\frac12})
\right) 
\\&\Leftrightarrow\,
\mathbf{x}_{t-\frac12}		=	\mathbf{x}_{t-1} - \stepsize_t \left( 
\nabla f(\mathbf{x}\Xtrap{t-1}; \xi_{t-\frac12}) + \nabla_{\mathbf{x}} h(\mathbf{x}_{t-1}, \mathbf{y}_{t-1}; \zeta_{t-\frac12})
\right) 
,
\end{aligned}$$
as well as Eqs.~\eqref{eq_line_f} into Line~\ref{line_f}
$$\begin{aligned}
&
\hat{\mathbf{y}}_t	=	\hat{\mathbf{y}}_{t-1} - \stepsize_t \left( 
- \nabla_{\hat{\mathbf{y}}} h(\hat{\mathbf{x}}_{t-\frac12}, \hat{\mathbf{y}}_{t-\frac12}; \zeta_t)
+ 
\nabla g(\hat{\mathbf{y}}\Xtrap{t-1}; \xi_{t-\frac12})
\right)
\\&\Leftrightarrow\,
\mathbf{y}_t	=	\mathbf{y}_{t-1} - \stepsize_t \cdot \tfrac{\mu_F}{\mu_G} \left( 
- \nabla_{\mathbf{y}} h(\mathbf{x}_{t-\frac12}, \mathbf{y}_{t-\frac12}; \zeta_t)
+ 
\nabla g(\hat{\mathbf{y}}\Xtrap{t-1}; \xi_{t-\frac12})
\right) 
.
\end{aligned}$$
It is also straightforward to justify that Assumptions \ref{assu_cvxsmth}, \ref{assu_coupling} and \ref{assu_noise} are rediscovered by reverting the scaling reduction from $\hat{\mathscr{F}}(\hat{\mathbf{x}}, \hat{\mathbf{y}})$ to $\mathscr{F}(\mathbf{x},\mathbf{y})$.
Therefore, it suffices to analyze Algorithm \ref{algo_AG_EG} for $\hat{\mathscr{F}}(\hat{\mathbf{x}}, \hat{\mathbf{y}})$ and due to this scaling reduction, we only need to prove all results for the case of $\ratio = 1$.
To keep the notations simple, till the rest of this work we slightly abuse the notations and remove the hats in all symbols.

\pb\subsection{Auxiliary lemmas}\label{sec_aux_lemma}
We first state the following basic lemma to handle the inner-product induced terms for extragradient analysis:
\begin{lemma}\label{lemm_PRecursion}
Given $\thetaholder, \foneholder, \ftwoholder \in \reals^d$ and also $\boneholder, \btwoholder$ that satisfies
\begin{align}\label{eqnProx1}
\foneholder	=	\thetaholder - \boneholder
,\qquad
\ftwoholder	=	\thetaholder - \btwoholder
,
\end{align}
then for any $\arbholder\in \reals^d$ we have
\beq\label{eqnPRecursion}
\langle \btwoholder,	\foneholder - \arbholder \rangle
\le
\frac{1}{2}\|\btwoholder - \boneholder\|^2 
+ 
\frac{1}{2} \left[
\|\thetaholder - \arbholder\|^2 - \|\ftwoholder - \arbholder\|^2 - \|\thetaholder - \foneholder\|^2
\right]
.
\eeq
\end{lemma}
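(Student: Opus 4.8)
The plan is to observe that \eqref{eqnPRecursion} actually holds with \emph{equality}, so the stated inequality follows with nothing to spare; the whole argument is a two-line exercise with the polarization identity. First I would use the defining relations \eqref{eqnProx1} to express everything in the three vectors $\thetaholder-\arbholder$, $\boneholder$, $\btwoholder$. Since $\foneholder-\arbholder=(\thetaholder-\arbholder)-\boneholder$, the left-hand side becomes $\langle\btwoholder,\foneholder-\arbholder\rangle=\langle\btwoholder,\thetaholder-\arbholder\rangle-\langle\btwoholder,\boneholder\rangle$.

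Next I would expand each squared norm on the right-hand side in the same variables. From $\ftwoholder-\arbholder=(\thetaholder-\arbholder)-\btwoholder$ one gets $\tfrac12\|\thetaholder-\arbholder\|^2-\tfrac12\|\ftwoholder-\arbholder\|^2=\langle\btwoholder,\thetaholder-\arbholder\rangle-\tfrac12\|\btwoholder\|^2$; from $\thetaholder-\foneholder=\boneholder$ one gets $-\tfrac12\|\thetaholder-\foneholder\|^2=-\tfrac12\|\boneholder\|^2$; and $\tfrac12\|\btwoholder-\boneholder\|^2=\tfrac12\|\btwoholder\|^2-\langle\btwoholder,\boneholder\rangle+\tfrac12\|\boneholder\|^2$. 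Adding these three pieces, the $\pm\tfrac12\|\btwoholder\|^2$ and $\pm\tfrac12\|\boneholder\|^2$ contributions cancel in pairs, leaving exactly $\langle\btwoholder,\thetaholder-\arbholder\rangle-\langle\btwoholder,\boneholder\rangle$, i.e. the left-hand side. This proves \eqref{eqnPRecursion} as an identity, hence a fortiori as the claimed inequality.

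For completeness I would also mention the equivalent coordinate-free route: write $\langle\btwoholder,\foneholder-\arbholder\rangle=\langle\btwoholder,\foneholder-\ftwoholder\rangle+\langle\btwoholder,\ftwoholder-\arbholder\rangle$; by \eqref{eqnProx1} we have $\foneholder-\ftwoholder=\btwoholder-\boneholder$, so the first term equals $\langle\btwoholder,\btwoholder-\boneholder\rangle=\tfrac12\|\btwoholder\|^2+\tfrac12\|\btwoholder-\boneholder\|^2-\tfrac12\|\boneholder\|^2$; and since $\btwoholder=\thetaholder-\ftwoholder$, the three-point identity $\langle a-b,b-c\rangle=\tfrac12(\|a-c\|^2-\|b-c\|^2-\|a-b\|^2)$ with $(a,b,c)=(\thetaholder,\ftwoholder,\arbholder)$ handles the second term. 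The norm identifications $\|\btwoholder\|=\|\thetaholder-\ftwoholder\|$ and $\|\boneholder\|=\|\thetaholder-\foneholder\|$ then collapse the sum to the right-hand side of \eqref{eqnPRecursion}. Since every step is an exact algebraic identity, there is essentially no obstacle here; the only care needed is the bookkeeping of which squared-norm terms cancel and making sure the substitutions from \eqref{eqnProx1} are applied at the right places.
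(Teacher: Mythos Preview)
Your proof is correct and actually sharper than the paper's. You observe that \eqref{eqnPRecursion} is an \emph{identity}, which you verify by expanding both sides in $\thetaholder-\arbholder$, $\boneholder$, $\btwoholder$ via polarization. The paper instead writes $\langle\btwoholder,\foneholder-\arbholder\rangle=\langle\btwoholder-\boneholder,\foneholder-\ftwoholder\rangle+\langle\boneholder,\foneholder-\ftwoholder\rangle+\langle\btwoholder,\ftwoholder-\arbholder\rangle$, handles the last two terms by three-point identities, and then bounds the cross term $\langle\btwoholder-\boneholder,\foneholder-\ftwoholder\rangle$ by Cauchy--Schwarz followed by Young's inequality, producing $\tfrac12\|\btwoholder-\boneholder\|^2+\tfrac12\|\foneholder-\ftwoholder\|^2$; the extra $\tfrac12\|\foneholder-\ftwoholder\|^2$ then cancels a $-\tfrac12\|\foneholder-\ftwoholder\|^2$ left over from the three-point identities. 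But since \eqref{eqnProx1} forces $\foneholder-\ftwoholder=\btwoholder-\boneholder$, both Cauchy--Schwarz and Young are applied to parallel (in fact equal) vectors and hold with equality, so the paper's chain of inequalities collapses to the very identity you wrote down. Your route is shorter, avoids the spurious inequality steps, and makes transparent that nothing is lost in \eqref{eqnPRecursion}; the paper's route has the minor advantage of mirroring the standard extragradient template (as in \citet{chen2017accelerated}), which may be why it was written that way.
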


Proof of Lemma \ref{lemm_PRecursion} is provided in \S\ref{sec_proof,lemm_PRecursion}.
Lemma \ref{lemm_PRecursion} is standard and commonly adopted in extragradient-based analysis; see Lemma 2 of \citep{chen2017accelerated} for one with similar flavor.

En route to our proofs of Theorems \ref{theo_S_AG_EG} and \ref{theo_S_AG_EG_Str} we first introduce some notations.
Let $\tilde{\mathbf{x}}\in\reals^n, \tilde{\mathbf{y}}\in\reals^m$ and let the \emph{pointwise primal-dual gap function} be
\beq\label{Vquantity_def}\begin{aligned}
\Vquantity(\mathbf{x}, \mathbf{y} \mid \tilde{\mathbf{x}}, \tilde{\mathbf{y}})
=
F(\mathbf{x}) - F(\tilde{\mathbf{x}})
+
G(\mathbf{y}) - G(\tilde{\mathbf{y}})
+
\langle \nabla_{\mathbf{x}} H(\tilde{\mathbf{x}}, \tilde{\mathbf{y}}),	\mathbf{x} - \tilde{\mathbf{x}} \rangle
+
\langle -\nabla_{\mathbf{y}} H(\tilde{\mathbf{x}}, \tilde{\mathbf{y}}),	\mathbf{y} - \tilde{\mathbf{y}} \rangle
,
\end{aligned}\eeq
and they can be separated $
\Vquantity(\mathbf{x}, \mathbf{y} \mid \tilde{\mathbf{x}}, \tilde{\mathbf{y}})
=
\Vquantity_F(\mathbf{x} \mid \tilde{\mathbf{x}}, \tilde{\mathbf{y}})
+
\Vquantity_G(\mathbf{y} \mid \tilde{\mathbf{x}}, \tilde{\mathbf{y}})
$ defined as
\beq\label{Vquantity_def2}\begin{aligned}
\Vquantity_F(\mathbf{x} \mid \tilde{\mathbf{x}}, \tilde{\mathbf{y}})
&=
F(\mathbf{x}) - F(\tilde{\mathbf{x}})
+
\left\langle \nabla_{\mathbf{x}} H(\tilde{\mathbf{x}}, \tilde{\mathbf{y}}),		\mathbf{x} - \tilde{\mathbf{x}} \right\rangle
,
\\
\Vquantity_G(\mathbf{y} \mid \tilde{\mathbf{x}}, \tilde{\mathbf{y}})
&=
G(\mathbf{y}) - G(\tilde{\mathbf{y}})
+
\left\langle -\nabla_{\mathbf{y}} H(\tilde{\mathbf{x}}, \tilde{\mathbf{y}}),	\mathbf{y} - \tilde{\mathbf{y}} \right\rangle
.
\end{aligned}\eeq
We prove that either of these two quantities is lower-bounded by a positive quadratic:

\begin{lemma}\label{lemm_QuanBdd}
We have both $F(\mathbf{x})$ and $G(\mathbf{y})$ are $\smoothFG$-smooth and $\strcvx$-strongly convex.
Furthermore, for any $\mathbf{x}\in \reals^n$ we have
\beq\label{QuanBddx}
\Vquantity_F(\mathbf{x} \mid \oholder_{\mathbf{x}}^\star, \oholder_{\mathbf{y}}^\star)
=
F(\mathbf{x}) - F(\oholder_{\mathbf{x}}^\star)
+ 
\left\langle \nabla_{\mathbf{x}} H(\oholder_{\mathbf{x}}^\star, \oholder_{\mathbf{y}}^\star),	\mathbf{x} - \oholder_{\mathbf{x}}^\star \right\rangle 
\ge
\frac{\strcvx}{2} \left\|\mathbf{x} - \oholder_{\mathbf{x}}^\star\right\|^2
,
\eeq
and for any $\mathbf{y}\in \reals^m$
\beq\label{QuanBddy}
\Vquantity_G(\mathbf{y} \mid \oholder_{\mathbf{x}}^\star, \oholder_{\mathbf{y}}^\star)
=
G(\mathbf{y}) - G(\oholder_{\mathbf{y}}^\star)
-
\left\langle \nabla_{\mathbf{y}} H(\oholder_{\mathbf{x}}^\star, \oholder_{\mathbf{y}}^\star), \mathbf{y} - \oholder_{\mathbf{y}}^\star \right\rangle
\ge
\frac{\strcvx}{2} \left\|\mathbf{y} - \oholder_{\mathbf{y}}^\star\right\|^2
,
\eeq
where these two $V$-quantities are defined as in \eqref{Vquantity_def2}.
\end{lemma}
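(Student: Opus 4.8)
The plan is to derive all three assertions from the first-order stationarity of the saddle point $(\oholder_{\mathbf{x}}^\star,\oholder_{\mathbf{y}}^\star)$ together with Assumption~\ref{assu_cvxsmth}, after invoking the scaling-reduction argument of \S\ref{sec_scalingreduction} that lets us take $\ratio = 1$. Under that reduction the rescaled $G$ (for which we drop hats, as in \S\ref{sec_scalingreduction}) has its Hessian multiplied by $\mu_F/\mu_G$, so its curvature eigenvalues lie in $[\mu_F,\tfrac{\mu_F}{\mu_G}L_G]$; combined with $F$ being $L_F$-smooth and $\mu_F$-strongly convex, and with the choices $\smoothFG = L_F \lor (\tfrac{\mu_F}{\mu_G}L_G)$ and $\strcvx = \mu_F$ from \eqref{params}, this already gives the first claim that both $F$ and $G$ are $\smoothFG$-smooth and $\strcvx$-strongly convex (using that $L'$-smoothness implies $L$-smoothness for every $L \ge L'$).

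For \eqref{QuanBddx} and \eqref{QuanBddy}, I would use that the domain $\reals^n\times\reals^m$ is unconstrained and $\mathscr{F}$ is differentiable and convex-concave, so the saddle point of \eqref{assu_saddle} is the unique stationary point:
\[
\nabla F(\oholder_{\mathbf{x}}^\star) + \nabla_{\mathbf{x}} H(\oholder_{\mathbf{x}}^\star,\oholder_{\mathbf{y}}^\star) = \mathbf{0},
\qquad
\nabla_{\mathbf{y}} H(\oholder_{\mathbf{x}}^\star,\oholder_{\mathbf{y}}^\star) - \nabla G(\oholder_{\mathbf{y}}^\star) = \mathbf{0}.
\]
Substituting $\nabla_{\mathbf{x}} H(\oholder_{\mathbf{x}}^\star,\oholder_{\mathbf{y}}^\star) = -\nabla F(\oholder_{\mathbf{x}}^\star)$ into the definition \eqref{Vquantity_def2} collapses $\Vquantity_F(\mathbf{x} \mid \oholder_{\mathbf{x}}^\star,\oholder_{\mathbf{y}}^\star)$ to the Bregman-type quantity $F(\mathbf{x}) - F(\oholder_{\mathbf{x}}^\star) - \langle \nabla F(\oholder_{\mathbf{x}}^\star), \mathbf{x} - \oholder_{\mathbf{x}}^\star\rangle$, which is $\ge \tfrac{\strcvx}{2}\|\mathbf{x}-\oholder_{\mathbf{x}}^\star\|^2$ by the left inequality in Assumption~\ref{assu_cvxsmth} (strong convexity of $F$). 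Symmetrically, $-\nabla_{\mathbf{y}} H(\oholder_{\mathbf{x}}^\star,\oholder_{\mathbf{y}}^\star) = -\nabla G(\oholder_{\mathbf{y}}^\star)$ turns $\Vquantity_G(\mathbf{y} \mid \oholder_{\mathbf{x}}^\star,\oholder_{\mathbf{y}}^\star)$ into $G(\mathbf{y}) - G(\oholder_{\mathbf{y}}^\star) - \langle \nabla G(\oholder_{\mathbf{y}}^\star), \mathbf{y} - \oholder_{\mathbf{y}}^\star\rangle \ge \tfrac{\strcvx}{2}\|\mathbf{y}-\oholder_{\mathbf{y}}^\star\|^2$.

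The proof is essentially bookkeeping rather than substance; the only points that merit care are (i) checking that after the scaling reduction the strong-convexity modulus of the rescaled $G$ is exactly $\strcvx = \mu_F$ (not $\mu_G$) and that its smoothness modulus is dominated by $\smoothFG$, and (ii) keeping the sign conventions in $H(\mathbf{x},\mathbf{y}) = \mathbf{x}^\top \mathbf{B}\mathbf{y} - \mathbf{x}^\top\mathbf{u}_{\mathbf{x}} + \mathbf{u}_{\mathbf{y}}^\top\mathbf{y}$ straight so that the stationarity identities are inserted into \eqref{Vquantity_def2} with the correct signs. Existence and uniqueness of $(\oholder_{\mathbf{x}}^\star,\oholder_{\mathbf{y}}^\star)$ in the strongly-convex-strongly-concave regime is already recorded around \eqref{assu_saddle}, so nothing more is needed there.
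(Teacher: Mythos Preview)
Your proposal is correct and matches the paper's own proof essentially line for line: both invoke the scaling reduction to work with $\ratio=1$, record the first-order stationarity conditions $\nabla F(\oholder_{\mathbf{x}}^\star)+\nabla_{\mathbf{x}}H(\oholder_{\mathbf{x}}^\star,\oholder_{\mathbf{y}}^\star)=\mathbf{0}$ and $\nabla_{\mathbf{y}}H(\oholder_{\mathbf{x}}^\star,\oholder_{\mathbf{y}}^\star)-\nabla G(\oholder_{\mathbf{y}}^\star)=\mathbf{0}$, and combine them with $\strcvx$-strong convexity of $F$ and $G$ to obtain the quadratic lower bounds. The only cosmetic difference is the order of operations---you substitute stationarity first and then apply strong convexity to the resulting Bregman divergence, whereas the paper applies strong convexity first and then cancels the linear term via stationarity.
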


Proof of Lemma \ref{lemm_QuanBdd} is provided in \S\ref{sec_proof,lemm_QuanBdd}.
Our final auxiliary lemma on the key properties on stepsizes spells as follows:

\begin{lemma}\label{lemm_ssproperties}
Set $
\square
\equiv
\frac{\tilde{\sigma} [\Tholder(\Tholder+1)^2]^{1/2}}{\Cholder\sqrt{\Exs[
\|\mathbf{x}_0 - \oholder_{\mathbf{x}}^\star\|^2
+
\ratio\|\mathbf{y}_0 - \oholder_{\mathbf{y}}^\star\|^2
]}}
$.
Our stepsize choice \eqref{eq_stepsize} satisfies
(i)
$\stepsize_t\le \frac{t}{\square}$;
(ii)
$\left(\frac{t}{\stepsize_t}: t\ge 1\right)$ is a nonnegative, nondecreasing arithmetic sequence with common difference $\sqrt{\tfrac{1+\beta}{\quantile}}\smoothBLin$;
(iii)
$\smoothBLin\stepsize_t\le 1$, and
(iv)
the stepsize condition
\begin{align}\label{stepsize_cond}
\quantile - \frac{2\smoothFG}{t+1} \stepsize_t - (1+\beta)\smoothBLin^2 \stepsize_t^2
\ge
0
.
\end{align}
\end{lemma}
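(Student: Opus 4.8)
\emph{Proof plan.} Write \eqref{eq_stepsize} compactly as $\stepsize_t = \frac{t}{D + ct}$, where $D := \tfrac{2}{\quantile}\smoothFG \lor \square$ is the constant part of the denominator and $c := \sqrt{\tfrac{1+\beta}{\quantile}}\,\smoothBLin$ the coefficient of its linear-in-$t$ part. I would first record three elementary facts, used repeatedly: (a) the algebraic identity $(1+\beta)\smoothBLin^2 = \quantile\,c^2$; (b) $D \ge \tfrac{2}{\quantile}\smoothFG$ and $D \ge \square$, each holding for either branch of the maximum, and moreover $D > 0$ since $\smoothFG \ge L_F \ge \mu_F > 0$ in the strongly convex regime; (c) $\sqrt{(1+\beta)/\quantile} > 1$, because $0 < \quantile < 1 < 1+\beta$.

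Parts (i)--(iii) are then immediate. For (i), $D + ct \ge \square$ gives $\stepsize_t \le t/\square$. For (ii), $t/\stepsize_t = D + ct$ is affine and nonnegative in $t$ with consecutive difference exactly $c = \sqrt{(1+\beta)/\quantile}\,\smoothBLin$, which is the claimed nondecreasing arithmetic sequence. For (iii), if $\smoothBLin = 0$ it is trivial; otherwise $c > 0$ and $\smoothBLin\stepsize_t = \frac{\smoothBLin t}{D+ct} \le \frac{\smoothBLin t}{ct} = \frac{\smoothBLin}{c} = \sqrt{\quantile/(1+\beta)} < 1$.

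The substance is (iv). I would multiply \eqref{stepsize_cond} through by $(D+ct)^2 > 0$, turning the claim into the polynomial inequality $\quantile(D+ct)^2 - \tfrac{2\smoothFG t}{t+1}(D+ct) - (1+\beta)\smoothBLin^2 t^2 \ge 0$. Using fact (a), the outer and last terms collapse: $\quantile(D+ct)^2 - \quantile c^2 t^2 = \quantile D(D+2ct)$, so it remains to show $\quantile D(D+2ct) \ge \tfrac{2\smoothFG t}{t+1}(D+ct)$. Invoking $2\smoothFG \le \quantile D$ bounds the right side by $\tfrac{\quantile D t}{t+1}(D+ct)$; then $\tfrac{t}{t+1} \le 1$ and $ct \ge 0$ give $\tfrac{t}{t+1}(D+ct) \le D + ct \le D + 2ct$, and chaining yields the claim.

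I expect (iv) to be the only real obstacle, and even there the difficulty is essentially bookkeeping: the quadratic-in-$\stepsize_t$ terms cancel precisely because $c$ is calibrated to $\sqrt{(1+\beta)/\quantile}\,\smoothBLin$, and the cross term is absorbed because the denominator constant $D$ is designed to dominate $\tfrac{2}{\quantile}\smoothFG$. The one point requiring care is that $D$ is a maximum of two expressions, so throughout I would only use the one-sided bounds $D \ge \tfrac{2}{\quantile}\smoothFG$ and $D \ge \square$ rather than any exact value, which keeps all four claims insensitive to which branch is active.
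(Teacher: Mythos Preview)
Your proof is correct and, for parts (i)--(iii), identical in spirit to the paper's, which also dismisses them as ``straightforward.'' For part (iv) you take a slightly different but equally valid route: you clear the denominator $(D+ct)^2$ and reduce to the polynomial inequality $\quantile D(D+2ct) \ge \tfrac{2\smoothFG t}{t+1}(D+ct)$, then chain $2\smoothFG \le \quantile D$ and $\tfrac{t}{t+1}(D+ct) \le D+2ct$. The paper instead works directly with $\stepsize_t$: it first records $\stepsize_t \le \tfrac{t}{\frac{2}{\quantile}\smoothFG + ct}$ and $\stepsize_t \le 1/c$, then lower-bounds the left side of \eqref{stepsize_cond} by the single expression $\tfrac{\quantile}{t}\bigl(t - (\tfrac{2}{\quantile}\smoothFG + ct)\stepsize_t\bigr)$, which is manifestly nonnegative by the first bound. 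Both arguments hinge on the same two calibrations you correctly isolate---$(1+\beta)\smoothBLin^2 = \quantile c^2$ and $D \ge \tfrac{2}{\quantile}\smoothFG$---so the difference is purely stylistic: yours is a systematic expand-and-compare, the paper's is a compact one-line factoring. Your remark that only the one-sided bounds on $D$ are needed (never its exact value) is exactly right and matches how the paper uses the stepsize throughout.
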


Proof of Lemma \ref{lemm_ssproperties} is provided in \S\ref{sec_proof,lemm_ssproperties}.

\pb\subsection{Proof of Theorem \ref{theo_S_AG_EG}}\label{sec_proof,theo_S_AG_EG}
Throughout the proof we assume $\smoothFG + \smoothBLin > 0$ without loss of generality, since otherwise the result holds trivially.
Due to the scaling reduction argument in \S\ref{sec_scalingreduction}, we assume without loss of generality that $\ratio = 1$.

We introduce some notations.
Denote the (squared) metric by $
\distancemetric(\mathbf{x}, \mathbf{y}; \tilde{\mathbf{x}}, \tilde{\mathbf{y}})
\equiv
\left\|\mathbf{x} - \tilde{\mathbf{x}}\right\|^2
+
\left\|\mathbf{y} - \tilde{\mathbf{y}}\right\|^2
$, and denote the incurred stochastic noise terms
\beq\label{noisedef}
\begin{aligned}
\noiseFG^{t-\frac12}
&\equiv
\begin{bmatrix}
\nabla f(\mathbf{x}\Xtrap{t-1}; \xi_{t-\frac12})	-	\nabla F(\mathbf{x}\Xtrap{t-1})
\\
\nabla g(\mathbf{y}\Xtrap{t-1}; \xi_{t-\frac12})	-	\nabla G(\mathbf{y}\Xtrap{t-1})
\end{bmatrix}
,
\qquad
\noiseBLin^{t-\frac12}
\equiv
\begin{bmatrix}
\nabla_{\mathbf{x}} h(\mathbf{x}_{t-1}, \mathbf{y}_{t-1}; \zeta_{t-\frac12})	-	\nabla_{\mathbf{x}} H(\mathbf{x}_{t-1}, \mathbf{y}_{t-1})
\\
-\nabla_{\mathbf{y}} h(\mathbf{x}_{t-1}, \mathbf{y}_{t-1}; \zeta_{t-\frac12})	+	\nabla_{\mathbf{y}} H(\mathbf{x}_{t-1}, \mathbf{y}_{t-1})
\end{bmatrix}
,
\\
&\noiseBLin^t
\equiv
\begin{bmatrix}
\nabla_{\mathbf{x}} h(\mathbf{x}_{t-\frac12}, \mathbf{y}_{t-\frac12}; \zeta_t)	-	\nabla_{\mathbf{x}} H(\mathbf{x}_{t-\frac12}, \mathbf{y}_{t-\frac12})
\\
-\nabla_{\mathbf{y}} h(\mathbf{x}_{t-\frac12}, \mathbf{y}_{t-\frac12}; \zeta_t)	+	\nabla_{\mathbf{y}} H(\mathbf{x}_{t-\frac12}, \mathbf{y}_{t-\frac12})
\end{bmatrix}
.
\end{aligned}\eeq
For our martingale analysis we adopt the filtrations $
\mathcal{F}^\xi_t \equiv \sigma\left(\xi_s: s = \frac12, \frac32, \dots, s\le t \right)
$ and $
\mathcal{F}^\zeta_t \equiv \sigma\left(\zeta_s: s = \frac12, 1, \frac32, \dots, s\le t \right)
$, and also $\mathcal{F}_t \equiv \sigma( \mathcal{F}^\xi_t \cup \mathcal{F}^\zeta_t )$ be the $\sigma$-algebra generated by the union of $\mathcal{F}^\xi_t$ and $\mathcal{F}^\zeta_t$.

We are ready for the proof which proceeds as the following steps:

\paragraph{Step 1.}
We prove the following lemma:
\begin{lemma}\label{lemm_aggregated}
For arbitrary $\tilde{\mathbf{x}}\in \reals^n, \tilde{\mathbf{y}}\in \reals^m$ and $\alpha_t\in (0,1]$ the iterates of Algorithm~\ref{algo_AG_EG} ($\Sholder=1$) satisfy for $t=1,\dots,\Tholder$, almost surely
\begin{equation}\label{aggregated}
\begin{aligned}
\lefteqn{
\Vquantity(\mathbf{x}\avg{t-\frac12}, \mathbf{y}\avg{t-\frac12} \mid \tilde{\mathbf{x}}, \tilde{\mathbf{y}})
-
(1-\alpha_t)\Vquantity(\mathbf{x}\avg{t-\frac32}, \mathbf{y}\avg{t-\frac32} \mid \tilde{\mathbf{x}}, \tilde{\mathbf{y}})
}
\\&\le
\alpha_t\langle\nabla F(\mathbf{x}\Xtrap{t-1}) + \nabla_{\mathbf{x}} H(\mathbf{x}_{t-\frac12},\mathbf{y}_{t-\frac12}),	\mathbf{x}_{t-\frac12} - \tilde{\mathbf{x}} \rangle
+
\alpha_t\langle -\nabla_{\mathbf{y}} H(\mathbf{x}_{t-\frac12},\mathbf{y}_{t-\frac12}) + \nabla G(\mathbf{y}\Xtrap{t-1}),	\mathbf{y}_{t-\frac12} - \tilde{\mathbf{y}} \rangle
\\&\quad\,
+
\tfrac{\alpha_t^2\smoothFG}{2}\distancemetric(\mathbf{x}_{t-\frac12}, \mathbf{y}_{t-\frac12}; \mathbf{x}_{t-1}, \mathbf{y}_{t-1})
.
\end{aligned}\end{equation}
\end{lemma}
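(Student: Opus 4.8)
The plan is to decompose $\Vquantity(\mathbf{x},\mathbf{y}\mid\tilde{\mathbf{x}},\tilde{\mathbf{y}})$ into an $F$-part, a $G$-part, and an $H$-part, and to bound the corresponding difference in~\eqref{aggregated} bracket by bracket. The starting point is the algebraic identity that, because $H$ is bilinear with $\nabla_{\mathbf{x}}H(\mathbf{x},\mathbf{y})=\mathbf{B}\mathbf{y}-\mathbf{u}_{\mathbf{x}}$ and $\nabla_{\mathbf{y}}H(\mathbf{x},\mathbf{y})=\mathbf{B}^\top\mathbf{x}+\mathbf{u}_{\mathbf{y}}$, a direct computation shows
\[
\langle\nabla_{\mathbf{x}}H(\mathbf{a},\mathbf{b}),\,\mathbf{x}-\tilde{\mathbf{x}}\rangle + \langle-\nabla_{\mathbf{y}}H(\mathbf{a},\mathbf{b}),\,\mathbf{y}-\tilde{\mathbf{y}}\rangle = H(\mathbf{x},\tilde{\mathbf{y}}) - H(\tilde{\mathbf{x}},\mathbf{y})
\]
holds both for the linearization base point $(\mathbf{a},\mathbf{b})=(\tilde{\mathbf{x}},\tilde{\mathbf{y}})$ and for $(\mathbf{a},\mathbf{b})=(\mathbf{x},\mathbf{y})$. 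Using the $(\tilde{\mathbf{x}},\tilde{\mathbf{y}})$ version inside the definition~\eqref{Vquantity_def} rewrites $\Vquantity(\mathbf{x},\mathbf{y}\mid\tilde{\mathbf{x}},\tilde{\mathbf{y}}) = [F(\mathbf{x})-F(\tilde{\mathbf{x}})] + [G(\mathbf{y})-G(\tilde{\mathbf{y}})] + [H(\mathbf{x},\tilde{\mathbf{y}})-H(\tilde{\mathbf{x}},\mathbf{y})]$. Note this lemma uses only the averaging and extrapolation relations of Algorithm~\ref{algo_AG_EG}, not the actual gradient-update rules, so the stochasticity never enters and the bound will hold pathwise.

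\textbf{The $F$- and $G$-brackets.} For these I would run the classical Nesterov three-point argument. Shifting Line~\ref{line_g} by one index gives $\mathbf{x}\Xtrap{t-1}=(1-\alpha_t)\mathbf{x}\avg{t-\frac32}+\alpha_t\mathbf{x}_{t-1}$, which together with Line~\ref{line_c} yields the key identity $\mathbf{x}\avg{t-\frac12}-\mathbf{x}\Xtrap{t-1}=\alpha_t(\mathbf{x}_{t-\frac12}-\mathbf{x}_{t-1})$ (valid at $t=1$ as well, since $\mathbf{x}\Xtrap{0}=\mathbf{x}_0=\mathbf{x}\avg{-\frac12}$ by the initialization). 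By Lemma~\ref{lemm_QuanBdd}, $F$ is $\smoothFG$-smooth, so the quadratic upper bound from smoothness with base point $\mathbf{x}\Xtrap{t-1}$ gives
\[
F(\mathbf{x}\avg{t-\frac12}) \le F(\mathbf{x}\Xtrap{t-1}) + \langle\nabla F(\mathbf{x}\Xtrap{t-1}),\,\mathbf{x}\avg{t-\frac12}-\mathbf{x}\Xtrap{t-1}\rangle + \tfrac{\alpha_t^2\smoothFG}{2}\|\mathbf{x}_{t-\frac12}-\mathbf{x}_{t-1}\|^2 .
\]
I would then split the inner product along $\mathbf{x}\avg{t-\frac12}-\mathbf{x}\Xtrap{t-1}=(1-\alpha_t)(\mathbf{x}\avg{t-\frac32}-\mathbf{x}\Xtrap{t-1})+\alpha_t(\mathbf{x}_{t-\frac12}-\mathbf{x}\Xtrap{t-1})$ and invoke convexity of $F$ (again Lemma~\ref{lemm_QuanBdd}) at $\mathbf{x}\avg{t-\frac32}$ with weight $1-\alpha_t\ge 0$ and at $\tilde{\mathbf{x}}$ with weight $\alpha_t$; the $\mathbf{x}\Xtrap{t-1}$-terms cancel and one obtains
\[
[F(\mathbf{x}\avg{t-\frac12})-F(\tilde{\mathbf{x}})]-(1-\alpha_t)[F(\mathbf{x}\avg{t-\frac32})-F(\tilde{\mathbf{x}})] \le \alpha_t\langle\nabla F(\mathbf{x}\Xtrap{t-1}),\,\mathbf{x}_{t-\frac12}-\tilde{\mathbf{x}}\rangle + \tfrac{\alpha_t^2\smoothFG}{2}\|\mathbf{x}_{t-\frac12}-\mathbf{x}_{t-1}\|^2 ,
\]
and symmetrically for the $G$-bracket via Lines~\ref{line_d} and~\ref{line_h}, producing $\alpha_t\langle\nabla G(\mathbf{y}\Xtrap{t-1}),\mathbf{y}_{t-\frac12}-\tilde{\mathbf{y}}\rangle+\tfrac{\alpha_t^2\smoothFG}{2}\|\mathbf{y}_{t-\frac12}-\mathbf{y}_{t-1}\|^2$.

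\textbf{The $H$-bracket, combination, and the main point.} Since $H(\cdot,\tilde{\mathbf{y}})$ and $H(\tilde{\mathbf{x}},\cdot)$ are affine and Lines~\ref{line_c}--\ref{line_d} are convex combinations, the $H$-bracket telescopes \emph{exactly}, with no curvature remainder, to $\alpha_t[H(\mathbf{x}_{t-\frac12},\tilde{\mathbf{y}})-H(\tilde{\mathbf{x}},\mathbf{y}_{t-\frac12})]$; the $(\mathbf{a},\mathbf{b})=(\mathbf{x},\mathbf{y})$ version of the identity above, evaluated at $(\mathbf{x}_{t-\frac12},\mathbf{y}_{t-\frac12})$, rewrites this as $\alpha_t\langle\nabla_{\mathbf{x}}H(\mathbf{x}_{t-\frac12},\mathbf{y}_{t-\frac12}),\mathbf{x}_{t-\frac12}-\tilde{\mathbf{x}}\rangle+\alpha_t\langle-\nabla_{\mathbf{y}}H(\mathbf{x}_{t-\frac12},\mathbf{y}_{t-\frac12}),\mathbf{y}_{t-\frac12}-\tilde{\mathbf{y}}\rangle$. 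Adding the three bounds and using $\|\mathbf{x}_{t-\frac12}-\mathbf{x}_{t-1}\|^2+\|\mathbf{y}_{t-\frac12}-\mathbf{y}_{t-1}\|^2=\distancemetric(\mathbf{x}_{t-\frac12},\mathbf{y}_{t-\frac12};\mathbf{x}_{t-1},\mathbf{y}_{t-1})$ reproduces~\eqref{aggregated}; since everything is pathwise in the iterates, ``almost surely'' is automatic. I expect the main point to be conceptual rather than computational: the fact that the extragradient step evaluates the coupling gradient precisely at the midpoint $(\mathbf{x}_{t-\frac12},\mathbf{y}_{t-\frac12})$ is exactly what makes the bilinear identity hold with that base point, and hence lets the $H$-bracket contribute no $O(\alpha_t^2)$ term, in contrast to the $F$- and $G$-brackets; the remaining care is the index-shift bookkeeping for $\mathbf{x}\Xtrap{t-1}$ and the $t=1$ boundary case.
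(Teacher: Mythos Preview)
Your proposal is correct and follows essentially the same route as the paper's proof: the paper also derives the $F$- and $G$-brackets via the smoothness/convexity three-point argument using the identity $\mathbf{x}\avg{t-\frac12}-\mathbf{x}\Xtrap{t-1}=\alpha_t(\mathbf{x}_{t-\frac12}-\mathbf{x}_{t-1})$, and handles the $H$-bracket by linearity of the averaging step together with the monotonicity relation $\langle\nabla_{\mathbf{x}}H(\tilde{\mathbf{x}},\tilde{\mathbf{y}}),\mathbf{x}_{t-\frac12}-\tilde{\mathbf{x}}\rangle+\langle-\nabla_{\mathbf{y}}H(\tilde{\mathbf{x}},\tilde{\mathbf{y}}),\mathbf{y}_{t-\frac12}-\tilde{\mathbf{y}}\rangle\le\langle\nabla_{\mathbf{x}}H(\mathbf{x}_{t-\frac12},\mathbf{y}_{t-\frac12}),\mathbf{x}_{t-\frac12}-\tilde{\mathbf{x}}\rangle+\langle-\nabla_{\mathbf{y}}H(\mathbf{x}_{t-\frac12},\mathbf{y}_{t-\frac12}),\mathbf{y}_{t-\frac12}-\tilde{\mathbf{y}}\rangle$, which is exactly your bilinear identity (an equality here). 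The only cosmetic difference is that you pass the $H$-part through the closed form $H(\mathbf{x},\tilde{\mathbf{y}})-H(\tilde{\mathbf{x}},\mathbf{y})$ before changing the base point, whereas the paper stays in inner-product form throughout.
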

Note the proof only relies on the interpolation updates in our algorithm as in Lines \ref{line_c}, \ref{line_d}, \ref{line_g} and \ref{line_h}, and hence this result holds in a per-trajectory (almost-sure) fashion.

\pb\begin{proof}[Proof of Lemma \ref{lemm_aggregated}]
From the convexity and $\smoothFG$-smoothness of $F$, we know that for arbitrary $\tilde{\mathbf{x}}, \tilde{\mathbf{y}}$
\begin{align*}
\lefteqn{
F(\mathbf{x}\avg{t-\frac12}) - F(\tilde{\mathbf{x}})
=
F(\mathbf{x}\avg{t-\frac12}) - F(\mathbf{x}\Xtrap{t-1}) - \left(F(\tilde{\mathbf{x}}) - F(\mathbf{x}\Xtrap{t-1})\right)
}
\\&\le
\langle\nabla F(\mathbf{x}\Xtrap{t-1}), \mathbf{x}\avg{t-\frac12} - \mathbf{x}\Xtrap{t-1}\rangle
+
\tfrac{\smoothFG}{2} \left\| \mathbf{x}\avg{t-\frac12} - \mathbf{x}\Xtrap{t-1} \right\|^2
-
\langle 
\nabla F(\mathbf{x}\Xtrap{t-1}), \tilde{\mathbf{x}} - \mathbf{x}\Xtrap{t-1}
\rangle
.
\end{align*}
Taking $\tilde{\mathbf{x}} = \mathbf{x}\avg{t-\frac32} $ in the above inequality, we have
\begin{align*}
\lefteqn{
F(\mathbf{x}\avg{t-\frac12}) - F(\mathbf{x}\avg{t-\frac32})
=
F(\mathbf{x}\avg{t-\frac12}) - F(\mathbf{x}\Xtrap{t-1}) - \left(F(\mathbf{x}\avg{t-\frac32}) - F(\mathbf{x}\Xtrap{t-1})\right)
}
\\&\le
\langle\nabla F(\mathbf{x}\Xtrap{t-1}), \mathbf{x}\avg{t-\frac12} - \mathbf{x}\Xtrap{t-1}\rangle
+
\tfrac{\smoothFG}{2} \left\| \mathbf{x}\avg{t-\frac12} - \mathbf{x}\Xtrap{t-1} \right\|^2
-
\langle 
\nabla F(\mathbf{x}\Xtrap{t-1}), \mathbf{x}\avg{t-\frac32} - \mathbf{x}\Xtrap{t-1}
\rangle
.
\end{align*}
Multiplying the first display by $\alpha_t$ and the second display by $(1-\alpha_t)$ and adding them up, we have
\begin{align}\label{eq_acc_1}
\begin{aligned}
\lefteqn{
F(\mathbf{x}\avg{t-\frac12}) - (1-\alpha_t) F(\mathbf{x}\avg{t-\frac32}) - \alpha_t F(\tilde{\mathbf{x}})
}
\\&\le
\langle\nabla F(\mathbf{x}\Xtrap{t-1}), \mathbf{x}\avg{t-\frac12} - \mathbf{x}\Xtrap{t-1}\rangle
+
\tfrac{\smoothFG}{2} \left\| \mathbf{x}\avg{t-\frac12} - \mathbf{x}\Xtrap{t-1} \right\|^2
-
\langle 
\nabla F(\mathbf{x}\Xtrap{t-1}), (1-\alpha_t)\mathbf{x}\avg{t-\frac32} + \alpha_t\tilde{\mathbf{x}} - \mathbf{x}\Xtrap{t-1}
\rangle
\\&\le
\langle\nabla F(\mathbf{x}\Xtrap{t-1}), \alpha_t (\mathbf{x}_{t-\frac12} - \mathbf{x}_{t-1}) \rangle
+
\tfrac{\smoothFG}{2}\|\alpha_t (\mathbf{x}_{t-\frac12} - \mathbf{x}_{t-1})\|^2
-
\langle 
\nabla F(\mathbf{x}\Xtrap{t-1}), \alpha_t (\tilde{\mathbf{x}} - \mathbf{x}_{t-1})
\rangle
\\&=
\alpha_t\langle\nabla F(\mathbf{x}\Xtrap{t-1}), \mathbf{x}_{t-\frac12} - \tilde{\mathbf{x}} \rangle
+
\tfrac{\alpha_t^2\smoothFG}{2}\|\mathbf{x}_{t-\frac12} - \mathbf{x}_{t-1}\|^2
,
\end{aligned}\end{align}
where we applied the fact from our update rules that $
\mathbf{x}\avg{t-\frac12} - \mathbf{x}\Xtrap{t-1}
= 
\alpha_t(\mathbf{x}_{t-\frac12} - \mathbf{x}_{t-1})
$.
Following an analogous argument for $G$ we obtain
\begin{align}\label{eq_acc_2}
G(\mathbf{y}\avg{t-\frac12}) - (1-\alpha_t) G(\mathbf{y}\avg{t-\frac32}) - \alpha_t G(\tilde{\mathbf{y}})
\le
\alpha_t\langle\nabla G(\mathbf{y}\Xtrap{t-1}), \mathbf{y}_{t-\frac12} - \tilde{\mathbf{y}} \rangle
+
\tfrac{\alpha_t^2\smoothFG}{2}\|\mathbf{y}_{t-\frac12} - \mathbf{y}_{t-1}\|^2
.
\end{align}
On the other hand, due to Lines \ref{line_c} and \ref{line_d} we have
$$\begin{aligned}
&
\langle \nabla_{\mathbf{x}} H(\tilde{\mathbf{x}},\tilde{\mathbf{y}}), \mathbf{x}\avg{t-\frac12} - \tilde{\mathbf{x}} \rangle - (1-\alpha_t)\langle \nabla_{\mathbf{x}} H(\tilde{\mathbf{x}},\tilde{\mathbf{y}}), \mathbf{x}\avg{t-\frac32} - \tilde{\mathbf{x}} \rangle
\\&=
\langle \nabla_{\mathbf{x}} H(\tilde{\mathbf{x}},\tilde{\mathbf{y}}),	\mathbf{x}\avg{t-\frac12} - \tilde{\mathbf{x}} - (1-\alpha_t)(\mathbf{x}\avg{t-\frac32} - \mathbf{x}) \rangle
=
\alpha_t\langle \nabla_{\mathbf{x}} H(\tilde{\mathbf{x}},\tilde{\mathbf{y}}),	\mathbf{x}_{t-\frac12} - \tilde{\mathbf{x}} \rangle
,
\end{aligned}$$
and analogously
$$\begin{aligned}
&
\langle -\nabla_{\mathbf{y}} H(\tilde{\mathbf{x}},\tilde{\mathbf{y}}), \mathbf{y}\avg{t-\frac12} - \tilde{\mathbf{y}} \rangle - (1-\alpha_t)\langle -\nabla_{\mathbf{y}} H(\tilde{\mathbf{x}},\tilde{\mathbf{y}}), \mathbf{y}\avg{t-\frac32} - \tilde{\mathbf{y}} \rangle
\\&=
\langle -\nabla_{\mathbf{y}} H(\tilde{\mathbf{x}},\tilde{\mathbf{y}}),	\mathbf{y}\avg{t-\frac12} - \tilde{\mathbf{y}} - (1-\alpha_t)(\mathbf{y}\avg{t-\frac32} - \tilde{\mathbf{y}}) \rangle
=
\alpha_t\langle -\nabla_{\mathbf{y}} H(\tilde{\mathbf{x}},\tilde{\mathbf{y}}),	\mathbf{y}_{t-\frac12} - \tilde{\mathbf{y}} \rangle
.
\end{aligned}$$
Due to our assumption on $H$ we have
\pb$$\begin{aligned}
\lefteqn{
\langle \nabla_{\mathbf{x}} H(\tilde{\mathbf{x}},\tilde{\mathbf{y}}), \mathbf{x}_{t-\frac12} - \tilde{\mathbf{x}} \rangle
+
\langle -\nabla_{\mathbf{y}} H(\tilde{\mathbf{x}},\tilde{\mathbf{y}}), \mathbf{y}_{t-\frac12} - \tilde{\mathbf{y}} \rangle
}
\\&\le
\langle \nabla_{\mathbf{x}} H(\mathbf{x}_{t-\frac12},\mathbf{y}_{t-\frac12}), \mathbf{x}_{t-\frac12} - \tilde{\mathbf{x}} \rangle
+
\langle -\nabla_{\mathbf{y}} H(\mathbf{x}_{t-\frac12},\mathbf{y}_{t-\frac12}), \mathbf{y}_{t-\frac12} - \tilde{\mathbf{y}} \rangle
.
\end{aligned}$$
Combining the above three displays together yields 
\begin{equation}\label{eq_acc_3}
\begin{aligned}
&
\langle \nabla_{\mathbf{x}} H(\tilde{\mathbf{x}},\tilde{\mathbf{y}}), \mathbf{x}\avg{t-\frac12} - \tilde{\mathbf{x}} \rangle 
- (1-\alpha_t)\langle \nabla_{\mathbf{x}} H(\tilde{\mathbf{x}},\tilde{\mathbf{y}}), \mathbf{x}\avg{t-\frac32} - \tilde{\mathbf{x}} \rangle
\\&\hspace{1in}
+
\langle -\nabla_{\mathbf{y}} H(\tilde{\mathbf{x}},\tilde{\mathbf{y}}), \mathbf{y}\avg{t-\frac12} - \tilde{\mathbf{y}} \rangle - (1-\alpha_t)\langle -\nabla_{\mathbf{y}} H(\tilde{\mathbf{x}},\tilde{\mathbf{y}}), \mathbf{y}\avg{t-\frac32} - \tilde{\mathbf{y}} \rangle 
\\&\le
\alpha_t \left[ 
\langle \nabla_{\mathbf{x}} H(\mathbf{x}_{t-\frac12},\mathbf{y}_{t-\frac12}), \mathbf{x}_{t-\frac12} - \tilde{\mathbf{x}} \rangle
+
\langle -\nabla_{\mathbf{y}} H(\mathbf{x}_{t-\frac12},\mathbf{y}_{t-\frac12}), \mathbf{y}_{t-\frac12} - \tilde{\mathbf{y}} \rangle
\right]
.
\end{aligned}\end{equation}
Now, summing up Eqs.~\eqref{eq_acc_1},~\eqref{eq_acc_2} and~\eqref{eq_acc_3} and noting the definition of $\Vquantity$ in \eqref{Vquantity_def}, we have
$$\begin{aligned}
&
\Vquantity(\mathbf{x}\avg{t-\frac12}, \mathbf{y}\avg{t-\frac12} \mid \tilde{\mathbf{x}}, \tilde{\mathbf{y}})
-
(1-\alpha_t)\Vquantity(\mathbf{x}\avg{t-\frac32}, \mathbf{y}\avg{t-\frac32} \mid \tilde{\mathbf{x}}, \tilde{\mathbf{y}})
\\&=
F(\mathbf{x}\avg{t-\frac12}) - (1-\alpha_t) F(\mathbf{x}\avg{t-\frac32}) - \alpha_t F(\tilde{\mathbf{x}})
+
G(\mathbf{y}\avg{t-\frac12}) - (1-\alpha_t) G(\mathbf{y}\avg{t-\frac32}) - \alpha_t G(\tilde{\mathbf{y}})
\\&\quad\,
+
\langle \nabla_{\mathbf{x}} H(\tilde{\mathbf{x}},\tilde{\mathbf{y}}), \mathbf{x}\avg{t-\frac12} - \tilde{\mathbf{x}} \rangle - (1-\alpha_t)\langle \nabla_{\mathbf{x}} H(\tilde{\mathbf{x}},\tilde{\mathbf{y}}), \mathbf{x}\avg{t-\frac32} - \tilde{\mathbf{x}} \rangle
\\&\hspace{1in}
+
\langle -\nabla_{\mathbf{y}} H(\tilde{\mathbf{x}},\tilde{\mathbf{y}}), \mathbf{y}\avg{t-\frac12} - \tilde{\mathbf{y}} \rangle - (1-\alpha_t)\langle -\nabla_{\mathbf{y}} H(\tilde{\mathbf{x}},\tilde{\mathbf{y}}), \mathbf{y}\avg{t-\frac32} - \tilde{\mathbf{y}} \rangle
\\&\le
\alpha_t\left[
\langle
\nabla F(\mathbf{x}\Xtrap{t-1})			+	\nabla_{\mathbf{x}} H(\mathbf{x}_{t-\frac12},\mathbf{y}_{t-\frac12})
,
\mathbf{x}_{t-\frac12}				-	\tilde{\mathbf{x}}
\rangle
+
\langle
-\nabla_{\mathbf{y}} H(\mathbf{x}_{t-\frac12},\mathbf{y}_{t-\frac12})	+	\nabla G(\mathbf{y}\Xtrap{t-1}) 
,
\mathbf{y}_{t-\frac12}				-	\tilde{\mathbf{y}}
\rangle
\right]
\\&\quad\,
+ 
\tfrac{\alpha_t^2\smoothFG}{2}\left[
\|\mathbf{x}_{t-\frac12} - \mathbf{x}_{t-1}\|^2
+
\|\mathbf{y}_{t-\frac12} - \mathbf{y}_{t-1}\|^2
\right]
,
\end{aligned}$$
and hence conclude \eqref{aggregated} and Lemma \ref{lemm_aggregated}.
\end{proof}

\paragraph{Step 2.}
We target to prove, for our choice of $\stepsize_t$ that satisfies, for a given $\quantile\in (0, 1)$, \eqref{stepsize_cond} of Lemma \ref{lemm_ssproperties}(iv) that
\blue{$\quantile - \frac{2\smoothFG}{t+1} \stepsize_t - (1+\beta)\smoothBLin^2 \stepsize_t^2	\ge	0$}
we have for any $\tilde{\mathbf{x}}\in \reals^n$, $\tilde{\mathbf{y}}\in \reals^m$ and $\calT = 1,\dots,\Tholder$ that
\begin{align}\label{recursion_bdd_key}
\begin{aligned}
\lefteqn{
\calT(\calT + 1) \Exs[\Vquantity(\mathbf{x}\avg{\calT-\frac12}, \mathbf{y}\avg{\calT-\frac12} \mid \tilde{\mathbf{x}}, \tilde{\mathbf{y}})]
\blue{
+
\frac{\calT}{\stepsize_{\calT}} \Exs[ \distancemetric(\mathbf{x}_{\calT}, \mathbf{y}_{\calT}; \tilde{\mathbf{x}}, \tilde{\mathbf{y}})]
}
}
\\&\le
\frac{1}{\stepsize_1} \Exs[\distancemetric(\mathbf{x}_0, \mathbf{y}_0; \tilde{\mathbf{x}}, \tilde{\mathbf{y}})]
+
\sqrt{\tfrac{1+\beta}{\quantile}}\smoothBLin\sum_{t=2}^{\calT} \Exs[ \distancemetric(\mathbf{x}_{t-1}, \mathbf{y}_{t-1}; \tilde{\mathbf{x}}, \tilde{\mathbf{y}})]
+
\frac{\calT(\calT+\frac12)(\calT+1)}{[\Tholder(\Tholder+1)^2]^{1/2}}\cdot\Cholder\sigma \sqrt{\Exs[\distancemetric(\mathbf{x}_0, \mathbf{y}_0; \tilde{\mathbf{x}}, \tilde{\mathbf{y}})]}
.
\end{aligned}\end{align}
To bound the inner-product terms in \eqref{aggregated}, by setting $
\foneholder	=	\mathbf{x}_{t-\frac12}
$, $
\thetaholder	=	\mathbf{x}_{t-1}
$, $
\ftwoholder	=	\mathbf{x}_t
$, $
\boneholder	=	\stepsize_t\bigg(\nabla f(\mathbf{x}\Xtrap{t-1}; \xi_{t-\frac12})	+	\nabla_{\mathbf{x}} h(\mathbf{x}_{t-1}, \mathbf{y}_{t-1}; \zeta_{t-\frac12})\bigg)
$, $
\btwoholder	=	\stepsize_t\bigg(\nabla f(\mathbf{x}\Xtrap{t-1}; \xi_{t-\frac12})	+	\nabla_{\mathbf{x}} h(\mathbf{x}_{t-\frac12}, \mathbf{y}_{t-\frac12}; \zeta_t)\bigg)
$ as in Lemma~\ref{lemm_PRecursion} (with $\arbholder = \tilde{\mathbf{x}}$), we have
$$\begin{aligned}
\lefteqn{
\stepsize_t\langle
\nabla f(\mathbf{x}\Xtrap{t-1}; \xi_{t-\frac12})
+
\nabla_{\mathbf{x}} h(\mathbf{x}_{t-\frac12}, \mathbf{y}_{t-\frac12}; \zeta_t)
,
\mathbf{x}_{t-\frac12} - \tilde{\mathbf{x}}
\rangle
}
\\&\le
\frac12\left(
\|\mathbf{x}_{t-1} - \tilde{\mathbf{x}}\|^2
-
\|\mathbf{x}_t - \tilde{\mathbf{x}}\|^2
-
\|\mathbf{x}_{t-\frac12} - \mathbf{x}_{t-1}\|^2
\right)
+
\frac{\stepsize_t^2}{2}\|
\nabla_{\mathbf{x}} h(\mathbf{x}_{t-\frac12}, \mathbf{y}_{t-\frac12}; \zeta_t)
-
\nabla_{\mathbf{x}} h(\mathbf{x}_{t-1}, \mathbf{y}_{t-1}; \zeta_{t-\frac12})
\|^2
,
\end{aligned}$$
where Young’s inequality combined with the martingale structure yields (also noting \eqref{noisedef})
$$\begin{aligned}
\lefteqn{
\Exs\|
\nabla_{\mathbf{x}} h(\mathbf{x}_{t-\frac12}, \mathbf{y}_{t-\frac12}; \zeta_t)
-
\nabla_{\mathbf{x}} h(\mathbf{x}_{t-1}, \mathbf{y}_{t-1}; \zeta_{t-\frac12})
\|^2
}
\\&=
\Exs\|
\nabla_{\mathbf{x}} H(\mathbf{x}_{t-\frac12}, \mathbf{y}_{t-\frac12})
-
\nabla_{\mathbf{x}} H(\mathbf{x}_{t-1}, \mathbf{y}_{t-1})
-
\noiseBLin^{1,t-\frac12}
\|^2
+
\Exs\|\noiseBLin^{1,t}\|^2
\\&\le
(1+\beta)\smoothBLin^2\Exs\|\mathbf{y}_{t-\frac12}	-	\mathbf{y}_{t-1}\|^2
+
(1+\tfrac{1}{\beta})\Exs\|\noiseBLin^{1,t-\frac12}\|^2
+
\Exs\|\noiseBLin^{1,t}\|^2
.
\end{aligned}$$
Combining the above two displays with expectation taken gives
$$\begin{aligned}
\lefteqn{
\stepsize_t\Exs\langle
\nabla f(\mathbf{x}\Xtrap{t-1}; \xi_{t-\frac12})
+
\nabla_{\mathbf{x}} h(\mathbf{x}_{t-\frac12}, \mathbf{y}_{t-\frac12}; \zeta_t)
,
\mathbf{x}_{t-\frac12} - \tilde{\mathbf{x}}
\rangle
}
\\&\le
\frac12\left(
\Exs\|\mathbf{x}_{t-1} - \tilde{\mathbf{x}}\|^2
-
\Exs\|\mathbf{x}_t - \tilde{\mathbf{x}}\|^2
-
\Exs\|\mathbf{x}_{t-\frac12} - \mathbf{x}_{t-1}\|^2
\right)
\\&\quad\,
+
\frac{\stepsize_t^2}{2}\left(
(1+\beta)\smoothBLin^2\Exs\|\mathbf{y}_{t-\frac12}	-	\mathbf{y}_{t-1}\|^2
+
(1+\tfrac{1}{\beta})\Exs\|\noiseBLin^{1,t-\frac12}\|^2
+
\Exs\|\noiseBLin^{1,t}\|^2
\right)
.
\end{aligned}$$
Analogously by setting the appropriate parameters, we have
$$\begin{aligned}
\lefteqn{
\stepsize_t\Exs\langle
-\nabla_{\mathbf{y}} h(\mathbf{x}_{t-\frac12}, \mathbf{y}_{t-\frac12}; \zeta_t)
+
\nabla g(\mathbf{y}\Xtrap{t-1}; \xi_{t-\frac12})
,
\mathbf{y}_{t-\frac12} - \tilde{\mathbf{y}}
\rangle
}
\\&\le
\frac12\left(
\Exs\|\mathbf{y}_{t-1} - \tilde{\mathbf{y}} \|^2 
-
\Exs\|\mathbf{y}_t - \tilde{\mathbf{y}} \|^2
-
\Exs\|\mathbf{y}_{t-\frac12} - \mathbf{y}_{t-1} \|^2
\right)
\\&\quad\,
+
\frac{\stepsize_t^2}{2} \left(
(1+\beta)\smoothBLin^2\Exs\|\mathbf{x}_{t-\frac12}	-	\mathbf{x}_{t-1}\|^2
+
(1+\tfrac{1}{\beta})\Exs\|\noiseBLin^{2,t-\frac12}\|^2
+
\Exs\|\noiseBLin^{2,t}\|^2
\right)
.
\end{aligned}$$
Combining the last two displays gives
\begin{align}
\lefteqn{
\stepsize_t\Exs\langle
\nabla f(\mathbf{x}\Xtrap{t-1}; \xi_{t-\frac12})
+
\nabla_{\mathbf{x}} h(\mathbf{x}_{t-\frac12}, \mathbf{y}_{t-\frac12}; \zeta_t)
,
\mathbf{x}_{t-\frac12} - \tilde{\mathbf{x}}
\rangle
}
\nonumber\\&\hspace{1in}\,
+
\stepsize_t\Exs\langle
-\nabla_{\mathbf{y}} h(\mathbf{x}_{t-\frac12}, \mathbf{y}_{t-\frac12}; \zeta_t)
+
\nabla g(\mathbf{y}\Xtrap{t-1}; \xi_{t-\frac12})
,
\mathbf{y}_{t-\frac12} - \tilde{\mathbf{y}}
\rangle
\nonumber\\&\le
\frac12\left(
\Exs[\distancemetric(\mathbf{x}_{t-1}, \mathbf{y}_{t-1}; \tilde{\mathbf{x}}, \tilde{\mathbf{y}})]
-
\Exs[\distancemetric(\mathbf{x}_t, \mathbf{y}_t; \tilde{\mathbf{x}}, \tilde{\mathbf{y}})]
\right)
-
\frac{1 - (1+\beta)\smoothBLin^2 \stepsize_t^2}{2}
\Exs[\distancemetric(\mathbf{x}_{t-\frac12}, \mathbf{y}_{t-\frac12}; \mathbf{x}_{t-1}, \mathbf{y}_{t-1})]
\nonumber\\&\quad\,
+
\frac{\stepsize_t^2}{2}\left(
(1+\tfrac{1}{\beta})\Exs[
\|\noiseBLin^{1,t-\frac12}\|^2	+	\|\noiseBLin^{2,t-\frac12}\|^2
]
+
\Exs[
\|\noiseBLin^{1,t}\|^2			+	\|\noiseBLin^{2,t}\|^2
]
\right)
.
\label{keyeq}
\end{align}
Therefore plugging the above \eqref{keyeq} into \eqref{aggregated} of Lemma \ref{lemm_aggregated} with $\alpha_t = \frac{2}{t+1}$ and expectation taken, we have from \eqref{Vquantity_def2} that
$$\begin{aligned}
\lefteqn{
\Exs[\Vquantity(\mathbf{x}\avg{t-\frac12}, \mathbf{y}\avg{t-\frac12} \mid \tilde{\mathbf{x}}, \tilde{\mathbf{y}})]
-
\frac{t-1}{t+1}\Exs[\Vquantity(\mathbf{x}\avg{t-\frac32}, \mathbf{y}\avg{t-\frac32} \mid \tilde{\mathbf{x}}, \tilde{\mathbf{y}})]
}
\\&\le
\frac{2}{t+1}\Exs\langle
\nabla F(\mathbf{x}\Xtrap{t-1})
+\nabla_{\mathbf{x}} H(\mathbf{x}_{t-\frac12}, \mathbf{y}_{t-\frac12})
,
\mathbf{x}_{t-\frac12} - \tilde{\mathbf{x}}
\rangle
\\&\quad\,
+
\frac{2}{t+1}\Exs\langle
-\nabla_{\mathbf{y}} H(\mathbf{x}_{t-\frac12}, \mathbf{y}_{t-\frac12})
+\nabla G(\mathbf{y}\Xtrap{t-1})
,
\mathbf{y}_{t-\frac12} - \tilde{\mathbf{y}}
\rangle
+ 
\frac{2\smoothFG}{(t+1)^2}\Exs[\distancemetric(\mathbf{x}_{t-\frac12}, \mathbf{y}_{t-\frac12}; \mathbf{x}_{t-1}, \mathbf{y}_{t-1})]
\\&=
\frac{2}{t+1}\Exs\langle
\nabla f(\mathbf{x}\Xtrap{t-1}; \xi_{t-\frac12})
+\nabla_{\mathbf{x}} h(\mathbf{x}_{t-\frac12}, \mathbf{y}_{t-\frac12}; \zeta_t)
,
\mathbf{x}_{t-\frac12} - \tilde{\mathbf{x}}
\rangle
\\&\quad\,
+
\frac{2}{t+1}\Exs\langle
-\nabla_{\mathbf{y}} h(\mathbf{x}_{t-\frac12}, \mathbf{y}_{t-\frac12}; \zeta_t)
+\nabla g(\mathbf{y}\Xtrap{t-1}; \xi_{t-\frac12})
,
\mathbf{y}_{t-\frac12} - \tilde{\mathbf{y}}
\rangle
+
\frac{2\smoothFG}{(t+1)^2}\Exs[\distancemetric(\mathbf{x}_{t-\frac12}, \mathbf{y}_{t-\frac12}; \mathbf{x}_{t-1}, \mathbf{y}_{t-1})]
\\&\quad\,
-
\frac{2}{t+1}\Exs\langle
\noiseFG^{1,t-\frac12} + \noiseBLin^{1,t}
,
\mathbf{x}_{t-\frac12} - \tilde{\mathbf{x}}
\rangle
-
\frac{2}{t+1}\Exs\langle
\noiseFG^{2,t-\frac12} + \noiseBLin^{2,t}
,
\mathbf{y}_{t-\frac12} - \tilde{\mathbf{y}}
\rangle
\\&\le
\frac{1}{(t+1)\stepsize_t}\left(
\Exs[\distancemetric(\mathbf{x}_{t-1}, \mathbf{y}_{t-1}; \tilde{\mathbf{x}}, \tilde{\mathbf{y}})] 
- 
\Exs[\distancemetric(\mathbf{x}_t, \mathbf{y}_t; \tilde{\mathbf{x}}, \tilde{\mathbf{y}})] 
\right)
\\&\quad\,
-
\frac{1}{(t+1)\stepsize_t}\left(1 - \frac{2\smoothFG}{t+1} \stepsize_t - (1+\beta)\smoothBLin^2 \stepsize_t^2\right)\Exs[\distancemetric(\mathbf{x}_{t-\frac12}, \mathbf{y}_{t-\frac12}; \mathbf{x}_{t-1}, \mathbf{y}_{t-1})]
\\&\quad\,
+
\frac{\stepsize_t}{t+1}\left(
(1+\tfrac{1}{\beta})\Exs\|\noiseBLin^{1,t-\frac12}\|^2
+
\Exs\|\noiseBLin^{1,t}\|^2
\right)
+
\frac{\stepsize_t}{t+1}\left(
(1+\tfrac{1}{\beta})\Exs\|\noiseBLin^{2,t-\frac12}\|^2
+
\Exs\|\noiseBLin^{2,t}\|^2
\right)
\\&\quad\,
-
\frac{2}{t+1}\Exs\langle
\noiseFG^{1,t-\frac12} + \noiseBLin^{1,t}
,
\mathbf{x}_{t-\frac12} - \tilde{\mathbf{x}}
\rangle
-
\frac{2}{t+1}\Exs\langle
\noiseFG^{2,t-\frac12} + \noiseBLin^{2,t}
,
\mathbf{y}_{t-\frac12} - \tilde{\mathbf{y}}
\rangle
.
\end{aligned}$$
With some manipulations we obtain
\allowdisplaybreaks
\begin{align}
\lefteqn{
\Exs[\Vquantity(\mathbf{x}\avg{t-\frac12}, \mathbf{y}\avg{t-\frac12} \mid \tilde{\mathbf{x}}, \tilde{\mathbf{y}})] 
- 
\frac{t-1}{t+1}\Exs[\Vquantity(\mathbf{x}\avg{t-\frac32}, \mathbf{y}\avg{t-\frac32} \mid \tilde{\mathbf{x}}, \tilde{\mathbf{y}})]
}
\nonumber\\&\le
\frac{1}{(t+1)\stepsize_t}\left(
\Exs[ \distancemetric(\mathbf{x}_{t-1}, \mathbf{y}_{t-1}; \tilde{\mathbf{x}}, \tilde{\mathbf{y}})] 
- 
\Exs[ \distancemetric(\mathbf{x}_t, \mathbf{y}_t; \tilde{\mathbf{x}}, \tilde{\mathbf{y}})] 
\right)
\nonumber\\&\quad\,
-
\frac{1}{(t+1)\stepsize_t}\left(
\quantile - \frac{2\smoothFG}{t+1} \stepsize_t - (1+\beta)\smoothBLin^2 \stepsize_t^2
\right)\Exs[\distancemetric(\mathbf{x}_{t-\frac12}, \mathbf{y}_{t-\frac12}; \mathbf{x}_{t-1}, \mathbf{y}_{t-1})]
\nonumber\\&\quad\,
+
\frac{\stepsize_t}{t+1}\left(
(1+\tfrac{1}{\beta})\Exs\|\noiseBLin^{1,t-\frac12}\|^2
+
\Exs\|\noiseBLin^{1,t}\|^2
\right)
+
\frac{\stepsize_t}{t+1}\left(
(1+\tfrac{1}{\beta})\Exs\|\noiseBLin^{2,t-\frac12}\|^2
+
\Exs\|\noiseBLin^{2,t}\|^2
\right)
\nonumber\\&\quad\,
\blue{-
\frac{1-\quantile}{(t+1)\stepsize_t}\Exs[\distancemetric(\mathbf{x}_{t-\frac12}, \mathbf{y}_{t-\frac12}; \mathbf{x}_{t-1}, \mathbf{y}_{t-1})]
-
\frac{2}{t+1}\Exs\langle
\noiseFG^{1,t-\frac12}
,
\mathbf{x}_{t-\frac12} - \mathbf{x}_{t-1}
\rangle
-
\frac{2}{t+1}\Exs\langle
\noiseFG^{2,t-\frac12}
,
\mathbf{y}_{t-\frac12} - \mathbf{y}_{t-1}
\rangle
}
\nonumber\\&\quad\,
-
\frac{2}{t+1}\Exs\langle
\noiseFG^{1,t-\frac12}
,
\mathbf{x}_{t-1} - \tilde{\mathbf{x}}
\rangle
-
\frac{2}{t+1}\Exs\langle
\noiseFG^{2,t-\frac12}
,
\mathbf{y}_{t-1} - \tilde{\mathbf{y}}
\rangle
\nonumber\\&\quad\,
-
\frac{2}{t+1}\Exs\langle
\noiseBLin^{1,t}
,
\mathbf{x}_{t-\frac12} - \tilde{\mathbf{x}}
\rangle
-
\frac{2}{t+1}\Exs\langle
\noiseBLin^{2,t}
,
\mathbf{y}_{t-\frac12} - \tilde{\mathbf{y}}
\rangle
\nonumber\\&\equiv
\mbox{I}_1
+
\mbox{I}_2
+
\mbox{II}_1
+
\mbox{II}_2
+
\mbox{III}_1
+
\mbox{III}_2
,
\label{eq_recursionmain}
\end{align}
where for each line
$$\begin{aligned}
\mbox{I}_1 + \mbox{I}_2
&\le
\frac{1}{(t+1)\stepsize_t}\left(
\Exs[\distancemetric(\mathbf{x}_{t-1}, \mathbf{y}_{t-1}; \tilde{\mathbf{x}}, \tilde{\mathbf{y}})] 
- 
\Exs[\distancemetric(\mathbf{x}_t, \mathbf{y}_t; \tilde{\mathbf{x}}, \tilde{\mathbf{y}})]
\right)
,
\end{aligned}$$
due to the stepsize condition \eqref{stepsize_cond} which in turn gives the factor in bracket $
\quantile	-	\frac{2\smoothFG}{t+1} \stepsize_t	-	(1+\beta)\smoothBLin^2\stepsize_t^2
$ is nonnegative, and
$$\begin{aligned}
\mbox{II}_1 + \mbox{II}_2
&\le
\frac{\stepsize_t}{t+1}\left(
(1+\tfrac{1}{\beta})\Exs\|\noiseBLin^{t-\frac12}\|^2
+
\Exs\|\noiseBLin^t\|^2
\right)
\blue{+
\frac{\stepsize_t}{(1-\quantile)(t+1)}\Exs\|\noiseFG^{t-\frac12}\|^2
}
,
\end{aligned}$$
due to the basic quadratic inequalities that $
-
\tfrac{1-\quantile}{\stepsize_t}\|\mathbf{x}_{t-1}	-	\mathbf{x}_{t-\frac12}\|^2
-
2\langle
\noiseFG^{1,t-\frac12},	\mathbf{x}_{t-\frac12} - \mathbf{x}_{t-1}
\rangle
\le
\frac{\stepsize_t}{1-\quantile}\|\noiseFG^{1,t-\frac12}\|^2
$ and $
-
\frac{1-\quantile}{\stepsize_t}\|\mathbf{y}_{t-1}	-	\mathbf{y}_{t-\frac12}\|^2
-
2\langle
\noiseFG^{2,t-\frac12},	\mathbf{y}_{t-\frac12} - \mathbf{y}_{t-1}
\rangle
\le
\tfrac{\stepsize_t}{1-\quantile}\|\noiseFG^{2,t-\frac12}\|^2
$, and finally
$$\begin{aligned}
\mbox{III}_1
&=
-
\frac{2}{t+1}\Exs\langle
\noiseFG^{1,t-\frac12},	\mathbf{x}_{t-1} - \tilde{\mathbf{x}}
\rangle
-
\frac{2}{t+1}\Exs\langle
\noiseBLin^{1,t},		\mathbf{x}_{t-\frac12} - \tilde{\mathbf{x}}
\rangle
=
0
,
\end{aligned}$$
and analogously $\mbox{III}_2	=	0$, since each term in above is zero due to the law of iterated expectation applied to martingale difference conditions $
\Exs[\noiseFG^{i,t-\frac12}\mid \mathcal{F}_{t-1}]	=	\mathbf{0}
$ and $
\Exs[\noiseBLin^{i,t}\mid \mathcal{F}_{t-\frac12}]	=	\mathbf{0}
$, $i=1,2$.

Multiplying both sides of \eqref{eq_recursionmain} by $t(t+1)$ combined with the last three estimation bounds, and observing \eqref{noisevarone} and \eqref{noisevartwo}, we obtain for all $t=1,\dots,\Tholder$
$$\begin{aligned}
\lefteqn{
t(t+1)\Exs[\Vquantity(\mathbf{x}\avg{t-\frac12}, \mathbf{y}\avg{t-\frac12} \mid \tilde{\mathbf{x}}, \tilde{\mathbf{y}})] 
- 
(t-1)t\Exs[\Vquantity(\mathbf{x}\avg{t-\frac32}, \mathbf{y}\avg{t-\frac32} \mid \tilde{\mathbf{x}}, \tilde{\mathbf{y}})]
\le
t(t+1)\left(
\mbox{I}_1+\mbox{I}_2+\mbox{II}_1+\mbox{II}_1+\mbox{III}_1+\mbox{III}_2
\right)
}
\\&\le
\frac{t}{\stepsize_t}\left(
\Exs[\distancemetric(\mathbf{x}_{t-1}, \mathbf{y}_{t-1}; \tilde{\mathbf{x}}, \tilde{\mathbf{y}})] 
- 
\Exs[\distancemetric(\mathbf{x}_t, \mathbf{y}_t; \tilde{\mathbf{x}}, \tilde{\mathbf{y}})\right)]
+
t\stepsize_t\left(
\tfrac{1}{1-\quantile}\Exs\|\noiseFG^{t-\frac12}\|^2
+
(1+\tfrac{1}{\beta})\Exs\|\noiseBLin^{t-\frac12}\|^2
+
\Exs\|\noiseBLin^t\|^2
\right)
\\&\le
\frac{t}{\stepsize_t}\left(
\Exs[\distancemetric(\mathbf{x}_{t-1}, \mathbf{y}_{t-1}; \tilde{\mathbf{x}}, \tilde{\mathbf{y}})]
- 
\Exs[\distancemetric(\mathbf{x}_t, \mathbf{y}_t; \tilde{\mathbf{x}}, \tilde{\mathbf{y}})\right)]
+
\left(
\tfrac{1}{1-\quantile}\stdFG^2
+
(2+\tfrac{1}{\beta})\stdBLin^2
\right)t\stepsize_t
,
\end{aligned}$$
where in the last line above we applied \eqref{noisevarone} and \eqref{noisevartwo} in Assumption \ref{assu_noise}, so by law of iterated expectations
\begin{align}\label{noisevar_cont}
\begin{aligned}
&
\Exs\|\noiseFG^{t-\frac12}\|^2
=
\Exs\left[
\|\nabla f(\mathbf{x}\Xtrap{t-1}; \xi_{t-\frac12})	-	\nabla F(\mathbf{x}\Xtrap{t-1})\|^2
+
\|\nabla g(\mathbf{y}\Xtrap{t-1}; \xi_{t-\frac12})	-	\nabla G(\mathbf{y}\Xtrap{t-1})\|^2
\right]
\le
\stdFG^2
,
\\
&\Exs\|\noiseBLin^{t-\frac12}\|^2
=
\Exs\left[
\|\nabla_{\mathbf{x}} h(\mathbf{x}_{t-1}, \mathbf{y}_{t-1}; \zeta_{t-\frac12})	-	\nabla_{\mathbf{x}} H(\mathbf{x}_{t-1}, \mathbf{y}_{t-1})\|^2
\right.
\\&\hspace{1.5in}
+
\left.
\|-\nabla_{\mathbf{y}} h(\mathbf{x}_{t-1}, \mathbf{y}_{t-1}; \zeta_{t-\frac12})	+	\nabla_{\mathbf{y}} H(\mathbf{x}_{t-1}, \mathbf{y}_{t-1})\|^2
\right]
\le
\stdBLin^2
,
\\
&\Exs\|\noiseBLin^t\|^2
=
\Exs\left[
\|\nabla_{\mathbf{x}} h(\mathbf{x}_{t-\frac12}, \mathbf{y}_{t-\frac12}; \zeta_t)		-	\nabla_{\mathbf{x}} H(\mathbf{x}_{t-\frac12}, \mathbf{y}_{t-\frac12})\|^2
\right.
\\&\hspace{1.5in}
+
\left.
\|-\nabla_{\mathbf{y}} h(\mathbf{x}_{t-\frac12}, \mathbf{y}_{t-\frac12}; \zeta_t)	+	\nabla_{\mathbf{y}} H(\mathbf{x}_{t-\frac12}, \mathbf{y}_{t-\frac12})\|^2
\right]
\le
\stdBLin^2
.
\end{aligned}\end{align}
Now for a given $1\le \calT \le \Tholder$, we finish the proof by telescope the above recursion for $t=1,\dots,\calT$.
We conclude from our choice of stepsize as in \eqref{eq_stepsize} that satisfies \eqref{stepsize_cond} so by denoting $
\sigma
\equiv
\frac{1}{\sqrt{3}}\sqrt{
\tfrac{1}{1-\quantile}\stdFG^2
+
(2+\tfrac{1}{\beta})\stdBLin^2
}
$, we have by Lemma \ref{lemm_ssproperties}(i)
$$\begin{aligned}
\lefteqn{
\left(
\tfrac{1}{1-\quantile}\stdFG^2
+
(2+\tfrac{1}{\beta})\stdBLin^2
\right)\sum_{t=1}^{\calT} t\stepsize_t
=
3\sigma^2\sum_{t=1}^{\calT} t\stepsize_t
\le
3\sigma^2\cdot\frac{1}{\square}\sum_{t=1}^{\calT} t^2
}
\\&=
3\sigma^2\cdot\frac{\Cholder\Exs[\distancemetric^{\frac12}(\mathbf{x}_0, \mathbf{y}_0; \tilde{\mathbf{x}}, \tilde{\mathbf{y}})]}{\sigma[\Tholder(\Tholder+1)^2]^{1/2}}
\cdot
\frac{\calT(\calT+\frac12)(\calT+1)}{3}
=
\frac{\calT(\calT+\frac12)(\calT+1)}{[\Tholder(\Tholder+1)^2]^{1/2}}\cdot\sigma \Cholder\Exs[\distancemetric^{\frac12}(\mathbf{x}_0, \mathbf{y}_0; \tilde{\mathbf{x}}, \tilde{\mathbf{y}})]
,
\end{aligned}$$
where we recall in Lemma \ref{lemm_ssproperties} that $
\square
\equiv
\frac{\tilde{\sigma} [\Tholder(\Tholder+1)^2]^{1/2}}{\Cholder\sqrt{\Exs[
\|\mathbf{x}_0 - \oholder_{\mathbf{x}}^\star\|^2
+
\|\mathbf{y}_0 - \oholder_{\mathbf{y}}^\star\|^2
]}}
$.
Finally
$$\begin{aligned}
\lefteqn{
\calT(\calT+1) \Exs[\Vquantity(\mathbf{x}\avg{\calT-\frac12}, \mathbf{y}\avg{\calT-\frac12} \mid \tilde{\mathbf{x}}, \tilde{\mathbf{y}})]
}
\\&\le
\sum_{t=1}^{\calT} \frac{t}{\stepsize_t}\left(
\Exs[\distancemetric(\mathbf{x}_{t-1}, \mathbf{y}_{t-1}; \tilde{\mathbf{x}}, \tilde{\mathbf{y}})] -  \Exs[\distancemetric(\mathbf{x}_t, \mathbf{y}_t; \tilde{\mathbf{x}}, \tilde{\mathbf{y}})]
\right)
+
\left(
\tfrac{1}{1-\quantile}\stdFG^2
+
(2+\tfrac{1}{\beta})\stdBLin^2
\right)\sum_{t=1}^{\calT} t\stepsize_t
\\&=
\frac{1}{\stepsize_1} \Exs[\distancemetric(\mathbf{x}_0, \mathbf{y}_0; \tilde{\mathbf{x}}, \tilde{\mathbf{y}})]
+
\sum_{t=2}^{\calT} \underbrace{
\left(\frac{t}{\stepsize_t} - \frac{t-1}{\stepsize_{t-1}}\right)
}_{
= \sqrt{\tfrac{1+\beta}{\quantile}}\smoothBLin
}  \Exs[\distancemetric(\mathbf{x}_{t-1}, \mathbf{y}_{t-1}; \tilde{\mathbf{x}}, \tilde{\mathbf{y}})]
-
\frac{\calT}{\stepsize_{\calT}}  \Exs[\distancemetric(\mathbf{x}_{\calT}, \mathbf{y}_{\calT}; \tilde{\mathbf{x}}, \tilde{\mathbf{y}})]
\\&\quad\,
+
\frac{\calT(\calT+\frac12)(\calT+1)}{[\Tholder(\Tholder+1)^2]^{1/2}}\cdot\Cholder\sigma \Exs[\distancemetric^{\frac12}(\mathbf{x}_0, \mathbf{y}_0; \tilde{\mathbf{x}}, \tilde{\mathbf{y}})]
.
\end{aligned}$$
Note in above derivations we applied Lemma \ref{lemm_ssproperties}(ii).
Rearranging the terms along with Jensen’s inequality proves \eqref{recursion_bdd_key}.

\paragraph{Step 3.}
We conduct the following ``bootstrapping'' argument to arrive at our final theorem.
Starting from the recursion \eqref{recursion_bdd_key} we have by setting $
\tilde{\mathbf{x}} = \oholder_{\mathbf{x}}^\star
$, $
\tilde{\mathbf{y}} = \oholder_{\mathbf{y}}^\star
$, Lemma \ref{lemm_QuanBdd} implies that its first summand on the left hand $
\calT(\calT + 1) \Exs[\Vquantity(\mathbf{x}\avg{\calT-\frac12}, \mathbf{y}\avg{\calT-\frac12} \mid \oholder_{\mathbf{x}}^\star, \oholder_{\mathbf{y}}^\star)]
$ is nonnegative, and hence we can drop it and have for any $\calT = 1,\dots,\Tholder$
\allowdisplaybreaks
\begin{align}
&
\frac{\calT}{\stepsize_{\calT}} \Exs[ 
\distancemetric(\mathbf{x}_{\calT}, \mathbf{y}_{\calT}; \oholder_{\mathbf{x}}^\star, \oholder_{\mathbf{y}}^\star)
]
\le
\frac{1}{\stepsize_1} \Exs[ 
\distancemetric(\mathbf{x}_0, \mathbf{y}_0; \oholder_{\mathbf{x}}^\star, \oholder_{\mathbf{y}}^\star)
]
\nonumber\\&\quad\,
+
\sqrt{\tfrac{1+\beta}{\quantile}}\smoothBLin\sum_{t=2}^{\calT} \Exs[\distancemetric(\mathbf{x}_{t-1}, \mathbf{y}_{t-1}; \oholder_{\mathbf{x}}^\star, \oholder_{\mathbf{y}}^\star)]
+
\frac{\calT(\calT+\frac12)(\calT+1)}{[\Tholder(\Tholder+1)^2]^{1/2}}\cdot\Cholder\sigma \sqrt{\Exs[\distancemetric(\mathbf{x}_0, \mathbf{y}_0; \oholder_{\mathbf{x}}^\star, \oholder_{\mathbf{y}}^\star)]}
\nonumber\\&=
(\tfrac{2}{\quantile}\smoothFG+\square)\Exs[ 
\distancemetric(\mathbf{x}_0, \mathbf{y}_0; \oholder_{\mathbf{x}}^\star, \oholder_{\mathbf{y}}^\star)
]
\nonumber\\&\quad\,
+
\sqrt{\tfrac{1+\beta}{\quantile}}\smoothBLin\underbrace{
\sum_{t=1}^{\calT}  \Exs[\distancemetric(\mathbf{x}_{t-1}, \mathbf{y}_{t-1}; \oholder_{\mathbf{x}}^\star, \oholder_{\mathbf{y}}^\star)]
}_{
\equiv \mathcal{Q}_{\calT-1}
}
+
\frac{\calT(\calT+\frac12)(\calT+1)}{[\Tholder(\Tholder+1)^2]^{1/2}}\cdot\Cholder\sigma \sqrt{\Exs[\distancemetric(\mathbf{x}_0, \mathbf{y}_0; \oholder_{\mathbf{x}}^\star, \oholder_{\mathbf{y}}^\star)]}
.
\label{eq_recursion}
\end{align}
Converting \eqref{eq_recursion} to a version of partial sum $
\mathcal{Q}_{\calT-1}\equiv \sum_{t=1}^{\calT} \Exs[\distancemetric(\mathbf{x}_{t-1}, \mathbf{y}_{t-1}; \oholder_{\mathbf{x}}^\star, \oholder_{\mathbf{y}}^\star)]
$ that for all $\calT=1,\dots,\Tholder$
\begin{align}\label{eq_recursion_pri}
\begin{aligned}
\lefteqn{
\frac{\calT}{\stepsize_{\calT}} \Exs[ 
\distancemetric(\mathbf{x}_{\calT}, \mathbf{y}_{\calT}; \oholder_{\mathbf{x}}^\star, \oholder_{\mathbf{y}}^\star)
]
=
\frac{\calT}{\stepsize_{\calT}} (\mathcal{Q}_{\calT} - \mathcal{Q}_{\calT-1})
}
\\&\le
\sqrt{\tfrac{1+\beta}{\quantile}}\smoothBLin \mathcal{Q}_{\calT-1}
+
\frac{\calT(\calT+\frac12)(\calT+1)}{[\Tholder(\Tholder+1)^2]^{1/2}}\cdot\Cholder\sigma\sqrt{\mathcal{Q}_0}
+
(\tfrac{2}{\quantile}\smoothFG+\square) \mathcal{Q}_0
.
\end{aligned}\end{align}
\eqref{eq_recursion_pri} is equivalently written as
$$
\frac{\calT}{\stepsize_{\calT}} \mathcal{Q}_{\calT} 
\le
\frac{\calT+1}{\stepsize_{\calT+1}} \mathcal{Q}_{\calT-1}
+
\frac{\calT(\calT+\frac12)(\calT+1)}{[\Tholder(\Tholder+1)^2]^{1/2}}\cdot\Cholder\sigma\sqrt{\mathcal{Q}_0}
+
(\tfrac{2}{\quantile}\smoothFG+\square) \mathcal{Q}_0
.
$$
From here and onwards, we denote $
\kappa_t \equiv \frac{t}{\stepsize_t}
=
\tfrac{2}{\quantile}\smoothFG + \square + \sqrt{\tfrac{1+\beta}{\quantile}}\smoothBLin t
$ for each $t=1,\dots,\Tholder$.
Dividing both sides of the above display by $
\kappa_{\calT}\kappa_{\calT+1}	=	\frac{\calT}{\stepsize_{\calT}}\cdot\frac{\calT+1}{\stepsize_{\calT+1}}
$ gives
$$
\frac{\mathcal{Q}_{\calT}}{\kappa_{\calT+1}}
\le
\frac{\mathcal{Q}_{\calT-1}}{\kappa_{\calT}}
+
\frac{
\frac{\calT(\calT+\frac12)(\calT+1)}{[\Tholder(\Tholder+1)^2]^{1/2}}\cdot\Cholder\sigma\sqrt{\mathcal{Q}_0}
+
(\tfrac{2}{\quantile}\smoothFG+\square) \mathcal{Q}_0
}{\kappa_{\calT}\cdot\kappa_{\calT+1}}
.
$$
Telescoping up from $1,\dots,\calT-1$ for $1\le \calT\le \Tholder$ yields
$$\begin{aligned}
\lefteqn{
\frac{\mathcal{Q}_{\calT-1}}{\kappa_{\calT}}
\le
\frac{\mathcal{Q}_0}{\kappa_1}
+
\sum_{T=1}^{\calT-1}
\frac{
\frac{T(T+\frac12)(T+1)}{[\Tholder(\Tholder+1)^2]^{1/2}}\cdot\Cholder\sigma\sqrt{\mathcal{Q}_0}
+
(\tfrac{2}{\quantile}\smoothFG+\square) \mathcal{Q}_0
}{\kappa_T\cdot\kappa_{T+1}}
}
\\&\le
\frac{\mathcal{Q}_0}{\kappa_1}
+
\left[
\frac{\Tholder(\Tholder+\frac12)(\Tholder+1)}{[\Tholder(\Tholder+1)^2]^{1/2}}\cdot\Cholder\sigma\sqrt{\mathcal{Q}_0}
+
(\tfrac{2}{\quantile}\smoothFG+\square) \mathcal{Q}_0
\right]
\sum_{T=1}^{\calT-1}
\frac{1}{\kappa_T\cdot\kappa_{T+1}}
,
\end{aligned}$$
where we applied Lemma \ref{lemm_ssproperties}(ii) that for all $T=1,\dots,\calT-1$ we have $
\kappa_{T+1} - \kappa_T
= 
\sqrt{\tfrac{1+\beta}{\quantile}}\smoothBLin
$.
This yields
$$\begin{aligned}
\sqrt{\tfrac{1+\beta}{\quantile}}\smoothBLin\sum_{T=1}^{\calT-1}
\frac{1}{\kappa_T\cdot\kappa_{T+1}}
=
\sum_{T=1}^{\calT-1}
\left[
\frac{1}{\kappa_T} - \frac{1}{\kappa_{T+1}}
\right]
=
\frac{1}{\kappa_1} - \frac{1}{\kappa_{\calT}}
,
\end{aligned}$$
and hence
\allowdisplaybreaks
\begin{align*}
\lefteqn{
\sqrt{\tfrac{1+\beta}{\quantile}}\smoothBLin\frac{\mathcal{Q}_{\calT-1}}{\kappa_{\calT}}
}
\\&\le
\sqrt{\tfrac{1+\beta}{\quantile}}\smoothBLin\frac{\mathcal{Q}_0}{\kappa_1}
+
\left[
\frac{\Tholder(\Tholder+\frac12)(\Tholder+1)}{[\Tholder(\Tholder+1)^2]^{1/2}}\cdot\Cholder\sigma\sqrt{\mathcal{Q}_0}
+
(\tfrac{2}{\quantile}\smoothFG+\square) \mathcal{Q}_0
\right]
\sqrt{\tfrac{1+\beta}{\quantile}}\smoothBLin\sum_{T=1}^{\calT-1}
\frac{1}{\kappa_T\cdot\kappa_{T+1}}
\\&=
\sqrt{\tfrac{1+\beta}{\quantile}}\smoothBLin\frac{\mathcal{Q}_0}{\kappa_1}
+
\left[
\frac{\Tholder(\Tholder+\frac12)(\Tholder+1)}{[\Tholder(\Tholder+1)^2]^{1/2}}\cdot\Cholder\sigma\sqrt{\mathcal{Q}_0}
+
(\tfrac{2}{\quantile}\smoothFG+\square) \mathcal{Q}_0
\right]
\left(\frac{1}{
\kappa_1
}
-
\frac{1}{
\kappa_{\calT}
}
\right)
\\&\le
\frac{
\frac{\Tholder(\Tholder+\frac12)(\Tholder+1)}{[\Tholder(\Tholder+1)^2]^{1/2}}\cdot\Cholder\sigma\sqrt{\mathcal{Q}_0}
+
(\tfrac{2}{\quantile}\smoothFG+\square+\sqrt{\tfrac{1+\beta}{\quantile}}\smoothBLin)\mathcal{Q}_0
}{
\tfrac{2}{\quantile}\smoothFG + \square + \sqrt{\tfrac{1+\beta}{\quantile}}\smoothBLin
}
-
\frac{
\frac{\Tholder(\Tholder+\frac12)(\Tholder+1)}{[\Tholder(\Tholder+1)^2]^{1/2}}\cdot\Cholder\sigma\sqrt{\mathcal{Q}_0}
+
(\tfrac{2}{\quantile}\smoothFG+\square) \mathcal{Q}_0
}{
\kappa_{\calT}
}
\\&=
\mathcal{Q}_0 + \frac{
\frac{\Tholder(\Tholder+\frac12)(\Tholder+1)}{[\Tholder(\Tholder+1)^2]^{1/2}}\cdot\Cholder\sigma\sqrt{\mathcal{Q}_0}
}{
\kappa_1
}
-
\frac{
\frac{\Tholder(\Tholder+\frac12)(\Tholder+1)}{[\Tholder(\Tholder+1)^2]^{1/2}}\cdot\Cholder\sigma\sqrt{\mathcal{Q}_0}
+
(\tfrac{2}{\quantile}\smoothFG+\square) \mathcal{Q}_0
}{
\kappa_{\calT}
}
.
\end{align*}
Plugging this into \eqref{eq_recursion_pri} we have for all iterates $1\le \calT\le \Tholder$
\beq\label{Snoexpansion}\begin{aligned}
\lefteqn{
\Exs[\distancemetric(\mathbf{x}_{\calT}, \mathbf{y}_{\calT}; \oholder_{\mathbf{x}}^\star, \oholder_{\mathbf{y}}^\star)]
\le
\sqrt{\tfrac{1+\beta}{\quantile}}\smoothBLin\frac{\mathcal{Q}_{\calT-1}}{\kappa_{\calT}}
+
\frac{
\frac{\Tholder(\Tholder+\frac12)(\Tholder+1)}{[\Tholder(\Tholder+1)^2]^{1/2}}\cdot\Cholder\sigma\sqrt{\mathcal{Q}_0}
+
(\tfrac{2}{\quantile}\smoothFG+\square) \mathcal{Q}_0
}{
\kappa_{\calT}
}
}
\\&\le
\mathcal{Q}_0 + \frac{
\frac{\Tholder(\Tholder+\frac12)(\Tholder+1)}{[\Tholder(\Tholder+1)^2]^{1/2}}\cdot\Cholder\sigma\sqrt{\mathcal{Q}_0}
}{
\kappa_1
}
\le
\left(1 + \frac{
\Cholder\sigma[\Tholder(\Tholder+1)^2]^{1/2}
}{
\kappa_1\sqrt{\mathcal{Q}_0}
}\right) \mathcal{Q}_0
=
\mathcal{A}(\sigma;\Tholder,\Cholder,\quantile,\beta) \Exs[ 
\distancemetric(\mathbf{x}_0, \mathbf{y}_0; \oholder_{\mathbf{x}}^\star, \oholder_{\mathbf{y}}^\star)
]
,
\end{aligned}\eeq
where the prefactor $\mathcal{A}(\sigma;\Tholder,\Cholder,\quantile,\beta)$ lies in $[1,1+\Cholder^2]$ and reduces to 1 when the argument is set as 0.%
\footnote{Indeed, we have from the definition \eqref{prefactorA} of the prefactor $
\mathcal{A}(\tilde{\sigma};\Tholder,\Cholder,\quantile,\beta)
=
1 + \frac{\Cholder\tilde{\sigma}[\Tholder(\Tholder+1)^2]^{1/2}}{\kappa_1\sqrt{\Exs[
\|\mathbf{x}_0 - \oholder_{\mathbf{x}}^\star\|^2
+
\|\mathbf{y}_0 - \oholder_{\mathbf{y}}^\star\|^2
]}}
\ge 
1$ and also by Lemma \ref{lemm_ssproperties}(i) we have $\kappa_1\ge \frac{\tilde{\sigma} [\Tholder(\Tholder+1)^2]^{1/2}}{\Cholder\sqrt{\Exs[
\|\mathbf{x}_0 - \oholder_{\mathbf{x}}^\star\|^2
+
\|\mathbf{y}_0 - \oholder_{\mathbf{y}}^\star\|^2
]}}$ and hence it satisfies $
\mathcal{A}(\tilde{\sigma};\Tholder,\Cholder,\quantile,\beta)
\le
1+\Cholder^2
$.}

Now we drop the second summand on the left hand of \eqref{recursion_bdd_key} with $\tilde{\mathbf{x}} = \oholder_{\mathbf{x}}^\star$, $\tilde{\mathbf{y}} =  \oholder_{\mathbf{y}}^\star$, $\calT = \Tholder$.
Combining with \eqref{Snoexpansion} ($\calT = \Tholder$) gives
\allowdisplaybreaks
\begin{align*}
\lefteqn{
\Tholder(\Tholder+1) \Exs[\Vquantity(\mathbf{x}\avg{\Tholder-\frac12}, \mathbf{y}\avg{\Tholder-\frac12} \mid \oholder_{\mathbf{x}}^\star, \oholder_{\mathbf{y}}^\star)]
}
\\&\le
\kappa_1 \Exs[ 
\distancemetric(\mathbf{x}_0, \mathbf{y}_0; \oholder_{\mathbf{x}}^\star, \oholder_{\mathbf{y}}^\star)
]
+
\sqrt{\tfrac{1+\beta}{\quantile}}\smoothBLin\sum_{t=2}^{\Tholder} \Exs[\distancemetric(\mathbf{x}_{t-1}, \mathbf{y}_{t-1}; \oholder_{\mathbf{x}}^\star, \oholder_{\mathbf{y}}^\star)]
\\&\quad\,
+
\frac{\Tholder(\Tholder+\frac12)(\Tholder+1)}{[\Tholder(\Tholder+1)^2]^{1/2}}\cdot\Cholder\sigma \sqrt{\Exs[\distancemetric(\mathbf{x}_0, \mathbf{y}_0; \oholder_{\mathbf{x}}^\star, \oholder_{\mathbf{y}}^\star)]}
\\&\le
\left(
\tfrac{2}{\quantile}\smoothFG 
+ 
\frac{\sigma [\Tholder(\Tholder+1)^2]^{1/2}}{\Cholder\sqrt{\Exs[\distancemetric(\mathbf{x}_0, \mathbf{y}_0; \oholder_{\mathbf{x}}^\star, \oholder_{\mathbf{y}}^\star)]}}
+ 
\sqrt{\tfrac{1+\beta}{\quantile}}\smoothBLin
\right) \Exs[ 
\distancemetric(\mathbf{x}_0, \mathbf{y}_0; \oholder_{\mathbf{x}}^\star, \oholder_{\mathbf{y}}^\star)
]
\\&\quad\,
+
\sqrt{\tfrac{1+\beta}{\quantile}}\smoothBLin(\Tholder-1) \cdot \mathcal{A}(\sigma;\Tholder,\Cholder,\quantile,\beta) \Exs[ 
\distancemetric(\mathbf{x}_0, \mathbf{y}_0; \oholder_{\mathbf{x}}^\star, \oholder_{\mathbf{y}}^\star)
]
+
\Cholder\sigma[\Tholder(\Tholder+1)^2]^{1/2} \sqrt{\Exs[\distancemetric(\mathbf{x}_0, \mathbf{y}_0; \oholder_{\mathbf{x}}^\star, \oholder_{\mathbf{y}}^\star)]}
\\&\le
\left(\tfrac{2}{\quantile}\smoothFG + \mathcal{A}(\sigma;\Tholder,\Cholder,\quantile,\beta)\sqrt{\tfrac{1+\beta}{\quantile}}\smoothBLin \Tholder\right) \Exs[ 
\distancemetric(\mathbf{x}_0, \mathbf{y}_0; \oholder_{\mathbf{x}}^\star, \oholder_{\mathbf{y}}^\star)
]
\\&\quad\,
+
(\tfrac{1}{\Cholder}+\Cholder)\sigma [\Tholder(\Tholder+1)^2]^{1/2} \sqrt{\Exs[\distancemetric(\mathbf{x}_0, \mathbf{y}_0; \oholder_{\mathbf{x}}^\star, \oholder_{\mathbf{y}}^\star)]}
.
\end{align*}
Using \eqref{QuanBddx} and \eqref{QuanBddy} in Lemma \ref{lemm_QuanBdd} again lower bounds the left hand in the last display as
$$
\Tholder(\Tholder+1) \Exs[\Vquantity(\mathbf{x}\avg{\Tholder-\frac12}, \mathbf{y}\avg{\Tholder-\frac12} \mid \oholder_{\mathbf{x}}^\star, \oholder_{\mathbf{y}}^\star)]
\ge
\frac{\strcvx}{2} \Tholder(\Tholder+1) \Exs[ 
\distancemetric(\mathbf{x}\avg{\Tholder-\frac12}, \mathbf{y}\avg{\Tholder-\frac12}; \oholder_{\mathbf{x}}^\star, \oholder_{\mathbf{y}}^\star)
]
\ge
0
.
$$
Dividing both sides by $\frac{\strcvx}{2}\Tholder(\Tholder+1)$ concludes 
$$\begin{aligned}
\lefteqn{
\Exs[
\distancemetric(\mathbf{x}\avg{\Tholder-\frac12}, \mathbf{y}\avg{\Tholder-\frac12}; \oholder_{\mathbf{x}}^\star, \oholder_{\mathbf{y}}^\star)
]
}
\\&\le
\frac{2\left(
\tfrac{2}{\quantile}\smoothFG + \mathcal{A}(\sigma;\Tholder,\Cholder,\quantile,\beta) \sqrt{\tfrac{1+\beta}{\quantile}}\smoothBLin\Tholder
\right)}{\strcvx\Tholder(\Tholder+1)}
\Exs[ 
\distancemetric(\mathbf{x}_0, \mathbf{y}_0; \oholder_{\mathbf{x}}^\star, \oholder_{\mathbf{y}}^\star)
]
+
\frac{2(\tfrac{1}{\Cholder}+\Cholder)\sigma}{\strcvx\Tholder^{1/2}} \sqrt{\Exs[\distancemetric(\mathbf{x}_0, \mathbf{y}_0; \oholder_{\mathbf{x}}^\star, \oholder_{\mathbf{y}}^\star)]}
,
\end{aligned}$$
and hence concludes \eqref{S_AG_EG} and the whole proof of Theorem \ref{theo_S_AG_EG}.

\pb\subsection{Proof of Theorem~\ref{theo_S_AG_EG_Str}}\label{sec_proof,theo_S_AG_EG_Str}
Using a scaling reduction argument analogous to the one in \S\ref{sec_scalingreduction} we only need to prove the case of $\ratio=1$.
We overload function notations $F, H, G$ to the new group accordingly where $
F
\leftarrow
F(\mathbf{x}) - \tfrac{\mu_\star}{2}\|\mathbf{x}-\mathbf{x}_0\|^2 
$ and $
G
\leftarrow
G(\mathbf{y}) - \tfrac{\mu_\star}{2}\|\mathbf{y}-\mathbf{y}_0\|^2 
$ are nonstrongly convex and $
H
\leftarrow
\frac{\mu_\star}{2}\|\mathbf{x}-\mathbf{x}_0\|^2 
+ 
\mathbf{x}^\top \mathbf{B}\mathbf{y}
-
\mathbf{x}^\top \mathbf{u}_{\mathbf{x}} 
+
\mathbf{u}_{\mathbf{y}}^\top \mathbf{y}
- 
\frac{\mu_\star}{2}\|\mathbf{y}-\mathbf{y}_0\|^2
$ is an isotropic quadratic.
\blue{For convenience we repeat the iterates of Algorithm~\ref{algo_AG_EG_Str} with $\ratio=1$ as
\allowdisplaybreaks
\begin{align*}
\mathbf{x}_{t-\frac12}	&=	\mathbf{x}_{t-1}	-	\stepsize_t\left(\nabla f(\mathbf{x}\Xtrap{t-1}; \xi_{t-\frac12}) + \nabla_{\mathbf{x}} h(\mathbf{x}_{t-1}, \mathbf{y}_{t-1}; \zeta_{t-\frac12})\right)
,\\
\mathbf{y}_{t-\frac12}	&=	\mathbf{y}_{t-1}	-	\stepsize_t\left(-\nabla_{\mathbf{y}} h(\mathbf{x}_{t-1}, \mathbf{y}_{t-1}; \zeta_{t-\frac12}) + \nabla g(\mathbf{y}\Xtrap{t-1}; \xi_{t-\frac12})\right)
,\\
\mathbf{x}\avg{t-\frac12}	&=	(1-\alpha_t)\mathbf{x}\avg{t-\frac32} + \alpha_t\mathbf{x}_{t-\frac12}
,\\
\mathbf{y}\avg{t-\frac12}	&=	(1-\alpha_t)\mathbf{y}\avg{t-\frac32} + \alpha_t\mathbf{y}_{t-\frac12}
,\\
\mathbf{x}_t			&=	\mathbf{x}_{t-1}	-	\stepsize_t\left(\nabla f(\mathbf{x}\Xtrap{t-1}; \xi_{t-\frac12}) + \nabla_{\mathbf{x}} h(\mathbf{x}_{t-\frac12}, \mathbf{y}_{t-\frac12}; \zeta_t)\right)
,\\
\mathbf{y}_t			&=	\mathbf{y}_{t-1}	-	\stepsize_t\left(-\nabla_{\mathbf{y}} h(\mathbf{x}_{t-\frac12}, \mathbf{y}_{t-\frac12}; \zeta_t) + \nabla g(\mathbf{y}\Xtrap{t-1}; \xi_{t-\frac12})\right)
,\\
\mathbf{x}\Xtrap{t}		&=	(1-\alpha_{t+1})\mathbf{x}\avg{t-\frac12} + \alpha_{t+1}\mathbf{x}_{t}
,\\
\mathbf{y}\Xtrap{t}		&=	(1-\alpha_{t+1})\mathbf{y}\avg{t-\frac12} + \alpha_{t+1}\mathbf{y}_{t}
,
\end{align*}
with the initialization $
\mathbf{x}_0	=	\mathbf{x}\Xtrap{0}	=	\mathbf{x}\avg{-\frac12}	\in	\reals^n
$, $
\mathbf{y}_0	=	\mathbf{y}\Xtrap{0}	=	\mathbf{y}\avg{-\frac12}	\in	\reals^m
$.
}%
We continue to assume the noise-related setting as in \eqref{noisedef}, and continue to denote $
\distancemetric(\mathbf{x}, \mathbf{y}; \tilde{\mathbf{x}}, \tilde{\mathbf{y}})
\equiv
\left\|\mathbf{x} - \tilde{\mathbf{x}}\right\|^2
+
\left\|\mathbf{y} - \tilde{\mathbf{y}}\right\|^2
$.
Our proof proceeds in the following steps:

\paragraph{Step 1.}
We prove the following generalization of Lemma \ref{lemm_aggregated}:
\begin{lemma}\label{lemm_aggregatedprime}
For arbitrary $\tilde{\mathbf{x}}\in \reals^n, \tilde{\mathbf{y}}\in \reals^m$ and $\alpha_t\in (0,1]$ the iterates of Algorithm~\ref{algo_AG_EG_Str} satisfy almost surely
\begin{equation}\label{aggregatedprime}
\begin{aligned}
\lefteqn{
\Vquantity(\mathbf{x}\avg{t-\frac12}, \mathbf{y}\avg{t-\frac12} \mid \tilde{\mathbf{x}}, \tilde{\mathbf{y}})
-
(1-\alpha_t)\Vquantity(\mathbf{x}\avg{t-\frac32}, \mathbf{y}\avg{t-\frac32} \mid \tilde{\mathbf{x}}, \tilde{\mathbf{y}})
}
\\&\le
\alpha_t\langle\nabla F(\mathbf{x}\Xtrap{t-1}) + \nabla_{\mathbf{x}} H(\mathbf{x}_{t-\frac12},\mathbf{y}_{t-\frac12}),	\mathbf{x}_{t-\frac12} - \tilde{\mathbf{x}} \rangle
+
\alpha_t\langle -\nabla_{\mathbf{y}} H(\mathbf{x}_{t-\frac12},\mathbf{y}_{t-\frac12}) + \nabla G(\mathbf{y}\Xtrap{t-1}),	\mathbf{y}_{t-\frac12} - \tilde{\mathbf{y}} \rangle
\\&\quad\,
+
\tfrac{\alpha_t^2\smoothFG}{2}\distancemetric(\mathbf{x}_{t-\frac12}, \mathbf{y}_{t-\frac12}; \mathbf{x}_{t-1}, \mathbf{y}_{t-1})
- 
\alpha_t\mu_\star\distancemetric(\mathbf{x}_{t-\frac12}, \mathbf{y}_{t-\frac12}; \tilde{\mathbf{x}}, \tilde{\mathbf{y}})
.
\end{aligned}\end{equation}
\end{lemma}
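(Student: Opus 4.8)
The plan is to mirror the proof of Lemma~\ref{lemm_aggregated} essentially line for line, the only genuinely new ingredient being a sharper treatment of the coupling term that credits the $\mu_\star$-strongly-monotone part now residing inside $H$. After the scaling reduction of \S\ref{sec_scalingreduction} (so that $\ratio = 1$), I would first note that the interpolation identities underpinning Lemma~\ref{lemm_aggregated} are unchanged for Algorithm~\ref{algo_AG_EG_Str}: Lines~\ref{line_c_pri} and~\ref{line_g_pri} give $\mathbf{x}\avg{t-\frac12} - \mathbf{x}\Xtrap{t-1} = \alpha_t(\mathbf{x}_{t-\frac12} - \mathbf{x}_{t-1})$ and $\mathbf{x}\avg{t-\frac12} - \tilde{\mathbf{x}} = (1-\alpha_t)(\mathbf{x}\avg{t-\frac32} - \tilde{\mathbf{x}}) + \alpha_t(\mathbf{x}_{t-\frac12} - \tilde{\mathbf{x}})$, and likewise for the $\mathbf{y}$-iterates, since the update template is identical. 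Consequently, using only convexity and $\smoothFG$-smoothness of the regrouped-and-rescaled $F \leftarrow F - \tfrac{\mu_\star}{2}\|\cdot - \mathbf{x}_0\|^2$ and $G \leftarrow G - \tfrac{\mu_\star}{2}\|\cdot - \mathbf{y}_0\|^2$ (which hold with the $\smoothFG$ of~\eqref{params_group} by Assumption~\ref{assu_cvxsmth} together with the scaling reduction), the derivations of Eqs.~\eqref{eq_acc_1} and~\eqref{eq_acc_2} carry over verbatim, yielding $F(\mathbf{x}\avg{t-\frac12}) - (1-\alpha_t)F(\mathbf{x}\avg{t-\frac32}) - \alpha_t F(\tilde{\mathbf{x}}) \le \alpha_t\langle\nabla F(\mathbf{x}\Xtrap{t-1}),\mathbf{x}_{t-\frac12}-\tilde{\mathbf{x}}\rangle + \tfrac{\alpha_t^2\smoothFG}{2}\|\mathbf{x}_{t-\frac12}-\mathbf{x}_{t-1}\|^2$ and its $G$-analogue.

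The new step concerns $H$. As in Lemma~\ref{lemm_aggregated}, the interpolation identities collapse the telescoped quantities $\langle\nabla_{\mathbf{x}}H(\tilde{\mathbf{x}},\tilde{\mathbf{y}}),\mathbf{x}\avg{t-\frac12}-\tilde{\mathbf{x}}\rangle - (1-\alpha_t)\langle\nabla_{\mathbf{x}}H(\tilde{\mathbf{x}},\tilde{\mathbf{y}}),\mathbf{x}\avg{t-\frac32}-\tilde{\mathbf{x}}\rangle$ and its $\mathbf{y}$-counterpart into $\alpha_t\langle\nabla_{\mathbf{x}}H(\tilde{\mathbf{x}},\tilde{\mathbf{y}}),\mathbf{x}_{t-\frac12}-\tilde{\mathbf{x}}\rangle$ and $\alpha_t\langle-\nabla_{\mathbf{y}}H(\tilde{\mathbf{x}},\tilde{\mathbf{y}}),\mathbf{y}_{t-\frac12}-\tilde{\mathbf{y}}\rangle$ respectively. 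Now $H$ is the isotropic quadratic $\tfrac{\mu_\star}{2}\|\mathbf{x}-\mathbf{x}_0\|^2 + \mathbf{x}^\top\mathbf{B}\mathbf{y} - \mathbf{x}^\top\mathbf{u}_{\mathbf{x}} + \mathbf{u}_{\mathbf{y}}^\top\mathbf{y} - \tfrac{\mu_\star}{2}\|\mathbf{y}-\mathbf{y}_0\|^2$, with $\nabla_{\mathbf{x}}H(\mathbf{x},\mathbf{y}) = \mu_\star(\mathbf{x}-\mathbf{x}_0) + \mathbf{B}\mathbf{y} - \mathbf{u}_{\mathbf{x}}$ and $\nabla_{\mathbf{y}}H(\mathbf{x},\mathbf{y}) = \mathbf{B}^\top\mathbf{x} + \mathbf{u}_{\mathbf{y}} - \mu_\star(\mathbf{y}-\mathbf{y}_0)$, so a one-line computation of the gradient differences — in which the constant and linear offsets drop out and the two $\mathbf{B}$-cross terms cancel by skew-symmetry — gives the exact identity $\langle\nabla_{\mathbf{x}}H(\tilde{\mathbf{x}},\tilde{\mathbf{y}}),\mathbf{x}_{t-\frac12}-\tilde{\mathbf{x}}\rangle + \langle-\nabla_{\mathbf{y}}H(\tilde{\mathbf{x}},\tilde{\mathbf{y}}),\mathbf{y}_{t-\frac12}-\tilde{\mathbf{y}}\rangle = \langle\nabla_{\mathbf{x}}H(\mathbf{x}_{t-\frac12},\mathbf{y}_{t-\frac12}),\mathbf{x}_{t-\frac12}-\tilde{\mathbf{x}}\rangle + \langle-\nabla_{\mathbf{y}}H(\mathbf{x}_{t-\frac12},\mathbf{y}_{t-\frac12}),\mathbf{y}_{t-\frac12}-\tilde{\mathbf{y}}\rangle - \mu_\star\distancemetric(\mathbf{x}_{t-\frac12},\mathbf{y}_{t-\frac12};\tilde{\mathbf{x}},\tilde{\mathbf{y}})$. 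This replaces Eq.~\eqref{eq_acc_3}, which for a purely bilinear $H$ held without the $\mu_\star\distancemetric$ correction. Multiplying by $\alpha_t$ and summing with the $F$- and $G$-bounds, the definition~\eqref{Vquantity_def} of $\Vquantity$ reassembles the left-hand side of~\eqref{aggregatedprime}, while the right-hand side is exactly as claimed, the extra summand being precisely $-\alpha_t\mu_\star\distancemetric(\mathbf{x}_{t-\frac12},\mathbf{y}_{t-\frac12};\tilde{\mathbf{x}},\tilde{\mathbf{y}})$.

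I do not expect a real analytic obstacle: the whole argument is the Lemma~\ref{lemm_aggregated} computation plus one extra bookkeeping term. The points demanding care are (i) that after the regrouping $F,G$ are only convex, not strongly convex — so the proof may invoke only the smoothness half of Assumption~\ref{assu_cvxsmth} and the skew-symmetry of the coupling block of $H$, never its strong monotonicity, which has been exported into $H$; and (ii) that the scaling reduction has already normalized $\ratio = 1$ and that $\smoothFG,\smoothBLin,\mu_\star$ of~\eqref{params_group} are the smoothness and strong-monotonicity constants of the rescaled regrouped objective, so that the coupling-term algebra produces exactly $\mu_\star\distancemetric$ rather than some weighted variant. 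The delicacy is thus essentially in the overloaded notation, not in the estimates.
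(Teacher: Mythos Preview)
Your proposal is correct and essentially identical to the paper's proof: the paper also reduces to $\ratio=1$, reuses the derivations of Eqs.~\eqref{eq_acc_1}--\eqref{eq_acc_2} verbatim, and replaces the display above Eq.~\eqref{eq_acc_3} by the strong-monotonicity relation for the regrouped $H$, yielding the extra $-\alpha_t\mu_\star\distancemetric$ term. Your observation that for the isotropic quadratic $H$ this relation is in fact an \emph{equality} (the paper writes it as $\le$) is correct and a nice sharpening, but otherwise the two arguments coincide.
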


The proof goes in an analogous fashion as the proof of Lemma \ref{lemm_aggregated}, except that the display above \eqref{eq_acc_3} is replaced by
$$\begin{aligned}
\lefteqn{
\langle \nabla_{\mathbf{x}} H(\tilde{\mathbf{x}},\tilde{\mathbf{y}}), \mathbf{x}_{t-\frac12} - \tilde{\mathbf{x}} \rangle
+
\langle -\nabla_{\mathbf{y}} H(\tilde{\mathbf{x}},\tilde{\mathbf{y}}), \mathbf{y}_{t-\frac12} - \tilde{\mathbf{y}} \rangle
}
\\&\le
\langle \nabla_{\mathbf{x}} H(\mathbf{x}_{t-\frac12},\mathbf{y}_{t-\frac12}), \mathbf{x}_{t-\frac12} - \tilde{\mathbf{x}} \rangle
+
\langle -\nabla_{\mathbf{y}} H(\mathbf{x}_{t-\frac12},\mathbf{y}_{t-\frac12}), \mathbf{y}_{t-\frac12} - \tilde{\mathbf{y}} \rangle
- 
\mu_\star\distancemetric(\mathbf{x}_{t-\frac12}, \mathbf{y}_{t-\frac12}; \tilde{\mathbf{x}}, \tilde{\mathbf{y}})
,
\end{aligned}$$
due to our $H$ being a $\mu_\star$-strongly-convex-$\mu_\star$-strongly-concave isotropic quadratic function after scaling reduction.
Hence \eqref{eq_acc_3} becomes
\begin{align}
&
\langle \nabla_{\mathbf{x}} H(\tilde{\mathbf{x}},\tilde{\mathbf{y}}), \mathbf{x}\avg{t-\frac12} - \tilde{\mathbf{x}} \rangle 
- (1-\alpha_t)\langle \nabla_{\mathbf{x}} H(\tilde{\mathbf{x}},\tilde{\mathbf{y}}), \mathbf{x}\avg{t-\frac32} - \tilde{\mathbf{x}} \rangle
\nonumber\\&\hspace{1in}
+
\langle -\nabla_{\mathbf{y}} H(\tilde{\mathbf{x}},\tilde{\mathbf{y}}), \mathbf{y}\avg{t-\frac12} - \tilde{\mathbf{y}} \rangle - (1-\alpha_t)\langle -\nabla_{\mathbf{y}} H(\tilde{\mathbf{x}},\tilde{\mathbf{y}}), \mathbf{y}\avg{t-\frac32} - \tilde{\mathbf{y}} \rangle 
\nonumber\\&\le
\alpha_t \left[ 
\langle \nabla_{\mathbf{x}} H(\mathbf{x}_{t-\frac12},\mathbf{y}_{t-\frac12}), \mathbf{x}_{t-\frac12} - \tilde{\mathbf{x}} \rangle
+
\langle -\nabla_{\mathbf{y}} H(\mathbf{x}_{t-\frac12},\mathbf{y}_{t-\frac12}), \mathbf{y}_{t-\frac12} - \tilde{\mathbf{y}} \rangle
- 
\mu_\star\distancemetric(\mathbf{x}_{t-\frac12}, \mathbf{y}_{t-\frac12}; \tilde{\mathbf{x}}, \tilde{\mathbf{y}})
\right]
.
\label{eq_acc_3_prime}
\end{align}
This concludes \eqref{aggregatedprime} and the whole lemma.

\paragraph{Step 2.}
Analogous to \eqref{keyeq} in Step 2 in the proof of Theorem \ref{theo_S_AG_EG} in \S\ref{sec_proof,theo_S_AG_EG} we conclude for all $\mathbf{x} \in \reals^n$, $\mathbf{y} \in \reals^m$,
$$\begin{aligned}
&\quad\,
\stepsize_t\Exs\langle\nabla f(\mathbf{x}\Xtrap{t-1}; \xi_{t-\frac12}) + \nabla_{\mathbf{x}} h(\mathbf{x}_{t-\frac12}, \mathbf{y}_{t-\frac12}; \zeta_t),	\mathbf{x}_{t-\frac12} - \tilde{\mathbf{x}} \rangle
\\&\hspace{1in}\,
+
\stepsize_t\Exs\langle -\nabla_{\mathbf{y}} h(\mathbf{x}_{t-\frac12}, \mathbf{y}_{t-\frac12}; \zeta_t) + \nabla g(\mathbf{y}\Xtrap{t-1}; \xi_{t-\frac12}),	\mathbf{y}_{t-\frac12} - \tilde{\mathbf{y}} \rangle
\\&\le 
\frac12\left(\Exs[\distancemetric(\mathbf{x}_{t-1}, \mathbf{y}_{t-1}; \tilde{\mathbf{x}}, \tilde{\mathbf{y}})] - \Exs[\distancemetric(\mathbf{x}_t, \mathbf{y}_t; \tilde{\mathbf{x}}, \tilde{\mathbf{y}})]\right)
\\&\quad\,
-
\frac{1 - (1+\beta)\smoothBLin^2\stepsize_t^2}{2}\Exs[\distancemetric(\mathbf{x}_{t-\frac12}, \mathbf{y}_{t-\frac12}; \mathbf{x}_{t-1}, \mathbf{y}_{t-1})]
+
\frac{\stepsize_t^2}{2}(2+\tfrac{1}{\beta})\stdBLin^2
.
\end{aligned}$$
To show this, note that
$$\begin{aligned}
\lefteqn{
\stepsize_t\langle
\nabla f(\mathbf{x}\Xtrap{t-1}; \xi_{t-\frac12})
+
\nabla_{\mathbf{x}} h(\mathbf{x}_{t-\frac12}, \mathbf{y}_{t-\frac12}; \zeta_t)
,
\mathbf{x}_{t-\frac12} - \tilde{\mathbf{x}}
\rangle
}
\\&\le
\frac12\left(
\|\mathbf{x}_{t-1} - \tilde{\mathbf{x}}\|^2
-
\|\mathbf{x}_t - \tilde{\mathbf{x}}\|^2
-
\|\mathbf{x}_{t-\frac12} - \mathbf{x}_{t-1}\|^2
\right)
+
\frac{\stepsize_t^2}{2}\|
\nabla_{\mathbf{x}} h(\mathbf{x}_{t-\frac12}, \mathbf{y}_{t-\frac12}; \zeta_t)
-
\nabla_{\mathbf{x}} h(\mathbf{x}_{t-1}, \mathbf{y}_{t-1}; \zeta_{t-\frac12})
\|^2
,
\end{aligned}$$
and analogously
$$\begin{aligned}
\lefteqn{
\stepsize_t\langle
-\nabla_{\mathbf{y}} h(\mathbf{x}_{t-\frac12}, \mathbf{y}_{t-\frac12}; \zeta_t)
+
\nabla g(\mathbf{y}\Xtrap{t-1}; \xi_{t-\frac12})
,
\mathbf{y}_{t-\frac12} - \tilde{\mathbf{y}}
\rangle
}
\\&\le
\frac12\left(
\|\mathbf{y}_{t-1} - \tilde{\mathbf{y}} \|^2 
-
\|\mathbf{y}_t - \tilde{\mathbf{y}} \|^2
-
\|\mathbf{y}_{t-\frac12} - \mathbf{y}_{t-1} \|^2
\right)
+
\frac{\stepsize_t^2}{2}\|
\nabla_{\mathbf{y}} h(\mathbf{x}_{t-\frac12}, \mathbf{y}_{t-\frac12}; \zeta_t)
-
\nabla_{\mathbf{y}} h(\mathbf{x}_{t-1}, \mathbf{y}_{t-1}; \zeta_{t-\frac12})
\|^2
.
\end{aligned}$$
To handle the stochastic terms, Young’s inequality combined with the martingale structure, along with the definition of $\smoothBLin$, indicates
$$\begin{aligned}
\lefteqn{
\Exs\left\|\begin{bmatrix}
\nabla_{\mathbf{x}} h(\mathbf{x}_{t-\frac12}, \mathbf{y}_{t-\frac12}; \zeta_t)
-
\nabla_{\mathbf{x}} h(\mathbf{x}_{t-1}, \mathbf{y}_{t-1}; \zeta_{t-\frac12})
\\
\nabla_{\mathbf{y}} h(\mathbf{x}_{t-\frac12}, \mathbf{y}_{t-\frac12}; \zeta_t)
-
\nabla_{\mathbf{y}} h(\mathbf{x}_{t-1}, \mathbf{y}_{t-1}; \zeta_{t-\frac12})
\end{bmatrix}\right\|^2
}
\\&=
\Exs\left\|\begin{bmatrix}
\nabla_{\mathbf{x}} H(\mathbf{x}_{t-\frac12}, \mathbf{y}_{t-\frac12})
-
\nabla_{\mathbf{x}} H(\mathbf{x}_{t-1}, \mathbf{y}_{t-1})
-
\noiseBLin^{1,t-\frac12}
\\
\nabla_{\mathbf{y}} H(\mathbf{x}_{t-\frac12}, \mathbf{y}_{t-\frac12})
-
\nabla_{\mathbf{y}} H(\mathbf{x}_{t-1}, \mathbf{y}_{t-1})
-
\noiseBLin^{2,t-\frac12}
\end{bmatrix}\right\|^2
+
\Exs\left\|\begin{bmatrix}
\noiseBLin^{1,t}
\\
\noiseBLin^{2,t}
\end{bmatrix}\right\|^2
\\&\le
(1+\beta)\Exs\left\|\begin{bmatrix}
\mu_\star\mathbf{I}	&	\mathbf{B}
\\
\mathbf{B}^\top		&	\mu_\star\mathbf{I}
\end{bmatrix}\begin{bmatrix}
\mathbf{x}_{t-\frac12}	-	\mathbf{x}_{t-1}
\\
\mathbf{y}_{t-\frac12}	-	\mathbf{y}_{t-1}
\end{bmatrix}\right\|^2
+
(1+\tfrac{1}{\beta})\Exs\left\|\begin{bmatrix}
\noiseBLin^{1,t-\frac12}
\\
\noiseBLin^{2,t-\frac12}
\end{bmatrix}\right\|^2
+
\Exs\left\|\begin{bmatrix}
\noiseBLin^{1,t}
\\
\noiseBLin^{2,t}
\end{bmatrix}\right\|^2
\\&\le
(1+\beta)\smoothBLin^2\left(
\Exs\|\mathbf{x}_{t-\frac12}	-	\mathbf{x}_{t-1}\|^2
+
\Exs\|\mathbf{y}_{t-\frac12}	-	\mathbf{y}_{t-1}\|^2
\right)
+
(1+\tfrac{1}{\beta})\Exs\|\noiseBLin^{t-\frac12}\|^2
+
\Exs\|\noiseBLin^t\|^2
.
\end{aligned}$$
Combining the last three displays gives
\begin{align}
\lefteqn{
\stepsize_t\Exs\langle
\nabla f(\mathbf{x}\Xtrap{t-1}; \xi_{t-\frac12})
+
\nabla_{\mathbf{x}} h(\mathbf{x}_{t-\frac12}, \mathbf{y}_{t-\frac12}; \zeta_t)
,
\mathbf{x}_{t-\frac12} - \tilde{\mathbf{x}}
\rangle
}
\nonumber\\&\hspace{1in}\,
+
\stepsize_t\Exs\langle
-\nabla_{\mathbf{y}} h(\mathbf{x}_{t-\frac12}, \mathbf{y}_{t-\frac12}; \zeta_t)
+
\nabla g(\mathbf{y}\Xtrap{t-1}; \xi_{t-\frac12})
,
\mathbf{y}_{t-\frac12} - \tilde{\mathbf{y}}
\rangle
\nonumber\\&\le
\frac12\left(
\Exs[\distancemetric(\mathbf{x}_{t-1}, \mathbf{y}_{t-1}; \tilde{\mathbf{x}}, \tilde{\mathbf{y}})]
-
\Exs[\distancemetric(\mathbf{x}_t, \mathbf{y}_t; \tilde{\mathbf{x}}, \tilde{\mathbf{y}})]
\right)
-
\frac{1 - (1+\beta)\smoothBLin^2 \stepsize_t^2}{2}
\Exs[\distancemetric(\mathbf{x}_{t-\frac12}, \mathbf{y}_{t-\frac12}; \mathbf{x}_{t-1}, \mathbf{y}_{t-1})]
\nonumber\\&\quad\,
+
\frac{\stepsize_t^2}{2}\left(
(1+\tfrac{1}{\beta})\Exs\|\noiseBLin^{t-\frac12}\|^2
+
\Exs\|\noiseBLin^t\|^2
\right)
.
\tag{\ref{keyeq}}
\end{align}
Combining this with Lemma~\ref{lemm_aggregatedprime}, we have
$$\begin{aligned}
\lefteqn{
\Exs[\Vquantity(\mathbf{x}\avg{t-\frac12}, \mathbf{y}\avg{t-\frac12} \mid \tilde{\mathbf{x}}, \tilde{\mathbf{y}})]	-	(1-\alpha_t)\Exs[\Vquantity(\mathbf{x}\avg{t-\frac32}, \mathbf{y}\avg{t-\frac32} \mid \tilde{\mathbf{x}}, \tilde{\mathbf{y}})]
}
\\&\le
\blue{
\alpha_t\Exs\langle\nabla F(\mathbf{x}\Xtrap{t-1}) + \nabla_{\mathbf{x}} H(\mathbf{x}_{t-\frac12},\mathbf{y}_{t-\frac12}),	\mathbf{x}_{t-\frac12} - \tilde{\mathbf{x}} \rangle
+
\alpha_t\Exs\langle - \nabla_{\mathbf{y}} H(\mathbf{x}_{t-\frac12},\mathbf{y}_{t-\frac12}) + \nabla G(\mathbf{y}\Xtrap{t-1}),	\mathbf{y}_{t-\frac12} - \tilde{\mathbf{y}} \rangle
}
\\&\quad\,
\blue{
+
\tfrac{\alpha_t^2\smoothFG}{2}\Exs[\distancemetric(\mathbf{x}_{t-\frac12}, \mathbf{y}_{t-\frac12}; \mathbf{x}_{t-1}, \mathbf{y}_{t-1})]
-
\alpha_t\mu_\star\Exs[\distancemetric(\mathbf{x}_{t-\frac12}, \mathbf{y}_{t-\frac12}; \tilde{\mathbf{x}}, \tilde{\mathbf{y}})]
}
\\&=
\blue{
\alpha_t\Exs\langle\nabla f(\mathbf{x}\Xtrap{t-1}; \xi_{t-\frac12}) + \nabla_{\mathbf{x}} h(\mathbf{x}_{t-\frac12}, \mathbf{y}_{t-\frac12}; \zeta_t),	\mathbf{x}_{t-\frac12} - \tilde{\mathbf{x}} \rangle
}
\\&\quad\,
\blue{
+
\alpha_t\Exs\langle -\nabla_{\mathbf{y}} h(\mathbf{x}_{t-\frac12}, \mathbf{y}_{t-\frac12}; \zeta_t) + \nabla g(\mathbf{y}\Xtrap{t-1}; \xi_{t-\frac12}),	\mathbf{y}_{t-\frac12} - \tilde{\mathbf{y}} \rangle
}
\\&\quad\,
\blue{
-
\alpha_t\Exs\langle \noiseFG^{1,t-\frac12} + \noiseBLin^{1,t},	\mathbf{x}_{t-\frac12} - \tilde{\mathbf{x}}\rangle
-
\alpha_t\Exs\langle \noiseFG^{2,t-\frac12} + \noiseBLin^{2,t},	\mathbf{y}_{t-\frac12} - \tilde{\mathbf{y}} \rangle
}
\\&\quad\,
\blue{
+
\tfrac{\alpha_t^2\smoothFG}{2}\Exs[\distancemetric(\mathbf{x}_{t-\frac12}, \mathbf{y}_{t-\frac12}; \mathbf{x}_{t-1}, \mathbf{y}_{t-1})]
-
\alpha_t\mu_\star\Exs[\distancemetric(\mathbf{x}_{t-\frac12}, \mathbf{y}_{t-\frac12}; \tilde{\mathbf{x}}, \tilde{\mathbf{y}})]
}
\\&\le
\frac{\alpha_t}{\stepsize_t}\left(
\frac12\left(\Exs[\distancemetric(\mathbf{x}_{t-1}, \mathbf{y}_{t-1}; \tilde{\mathbf{x}}, \tilde{\mathbf{y}})] - \Exs[\distancemetric(\mathbf{x}_t, \mathbf{y}_t; \tilde{\mathbf{x}}, \tilde{\mathbf{y}})]\right)
\right.
\\&\hspace{1in}\,
\left.
-
\frac{1 - (1+\beta)\smoothBLin^2\stepsize_t^2}{2}\Exs[\distancemetric(\mathbf{x}_{t-\frac12}, \mathbf{y}_{t-\frac12}; \mathbf{x}_{t-1}, \mathbf{y}_{t-1})]
+
\frac{\stepsize_t^2}{2}(2+\tfrac{1}{\beta})\stdBLin^2
\right)
\\&\quad\,
\blue{
-
\alpha_t\Exs\langle \noiseFG^{1,t-\frac12} + \noiseBLin^{1,t},	\mathbf{x}_{t-\frac12} - \tilde{\mathbf{x}} \rangle
-
\alpha_t\Exs\langle \noiseFG^{2,t-\frac12} + \noiseBLin^{2,t},	\mathbf{y}_{t-\frac12} - \tilde{\mathbf{y}} \rangle
}
\\&\quad\,
+
\tfrac{\alpha_t^2\smoothFG}{2}\Exs[\distancemetric(\mathbf{x}_{t-\frac12}, \mathbf{y}_{t-\frac12}; \mathbf{x}_{t-1}, \mathbf{y}_{t-1})]
-
\alpha_t\mu_\star\Exs[\distancemetric(\mathbf{x}_{t-\frac12}, \mathbf{y}_{t-\frac12}; \tilde{\mathbf{x}}, \tilde{\mathbf{y}})]
.
\end{aligned}$$
Continuing this estimation gives
\allowdisplaybreaks
\begin{align*}
\lefteqn{
\Exs[\Vquantity(\mathbf{x}\avg{t-\frac12}, \mathbf{y}\avg{t-\frac12} \mid \tilde{\mathbf{x}}, \tilde{\mathbf{y}})]	-	(1-\alpha_t)\Exs[\Vquantity(\mathbf{x}\avg{t-\frac32}, \mathbf{y}\avg{t-\frac32} \mid \tilde{\mathbf{x}}, \tilde{\mathbf{y}})]
}
\\&\le
\frac{\alpha_t}{2\stepsize_t}\left(\Exs[\distancemetric(\mathbf{x}_{t-1}, \mathbf{y}_{t-1}; \tilde{\mathbf{x}}, \tilde{\mathbf{y}})] - \Exs[\distancemetric(\mathbf{x}_t, \mathbf{y}_t; \tilde{\mathbf{x}}, \tilde{\mathbf{y}})]\right)
\\&\quad\,
-
\frac{\alpha_t}{2\stepsize_t}\left(\quantile - \alpha_t\smoothFG\stepsize_t - (1+\beta)\smoothBLin^2\stepsize_t^2\right)\Exs[\distancemetric(\mathbf{x}_{t-\frac12}, \mathbf{y}_{t-\frac12}; \mathbf{x}_{t-1}, \mathbf{y}_{t-1})]
\\&\quad\,
+
\frac{\alpha_t\stepsize_t}{2}(2+\tfrac{1}{\beta})\stdBLin^2
-
\frac{\alpha_t(1 - \quantile)}{2\stepsize_t}\Exs[\distancemetric(\mathbf{x}_{t-\frac12}, \mathbf{y}_{t-\frac12}; \mathbf{x}_{t-1}, \mathbf{y}_{t-1})]
\\&\quad\,
\blue{
-
\alpha_t\Exs\langle \noiseFG^{1,t-\frac12}, \mathbf{x}_{t-\frac12} - \mathbf{x}_{t-1} \rangle
-
\alpha_t\Exs\langle \noiseFG^{2,t-\frac12}, \mathbf{y}_{t-\frac12} - \mathbf{y}_{t-1} \rangle
}
-
\alpha_t\mu_\star\Exs[\distancemetric(\mathbf{x}_{t-\frac12}, \mathbf{y}_{t-\frac12}; \tilde{\mathbf{x}}, \tilde{\mathbf{y}})]
\\&\le
\frac{\alpha_t}{2\stepsize_t}\left(\Exs[\distancemetric(\mathbf{x}_{t-1}, \mathbf{y}_{t-1}; \tilde{\mathbf{x}}, \tilde{\mathbf{y}})] - \Exs[\distancemetric(\mathbf{x}_t, \mathbf{y}_t; \tilde{\mathbf{x}}, \tilde{\mathbf{y}})]\right)
\\&\quad\,
-
\frac{\alpha_t}{2\stepsize_t}\left( \quantile - \alpha_t\smoothFG\stepsize_t - (1+\beta)\smoothBLin^2\stepsize_t^2\right)\Exs[\distancemetric(\mathbf{x}_{t-\frac12}, \mathbf{y}_{t-\frac12}; \mathbf{x}_{t-1}, \mathbf{y}_{t-1})]
\\&\quad\,
+
\frac{\alpha_t\stepsize_t}{2}(2+\tfrac{1}{\beta})\stdBLin^2
\blue{
+
\frac{\alpha_t\stepsize_t}{2(1 - \quantile)}\Exs[
\|\noiseFG^{1,t-\frac12}\|^2	+	\|\noiseFG^{2,t-\frac12}\|^2
]
}
-
\alpha_t\mu_\star\Exs[\distancemetric(\mathbf{x}_{t-\frac12}, \mathbf{y}_{t-\frac12}; \tilde{\mathbf{x}}, \tilde{\mathbf{y}})]
\\&\le
\frac{\alpha_t}{2\stepsize_t}\left(\Exs[\distancemetric(\mathbf{x}_{t-1}, \mathbf{y}_{t-1}; \tilde{\mathbf{x}}, \tilde{\mathbf{y}})]		-	\Exs[\distancemetric(\mathbf{x}_t, \mathbf{y}_t; \tilde{\mathbf{x}}, \tilde{\mathbf{y}})] \right)
\\&\quad\,
-
\frac{\alpha_t}{2\stepsize_t} \left( \quantile - \alpha_t\smoothFG\stepsize_t - (1+\beta)\smoothBLin^2\stepsize_t^2 \right)\Exs[\distancemetric(\mathbf{x}_{t-\frac12}, \mathbf{y}_{t-\frac12}; \mathbf{x}_{t-1}, \mathbf{y}_{t-1})]
\\&\quad\,
-
\alpha_t\mu_\star\Exs[\distancemetric(\mathbf{x}_{t-\frac12}, \mathbf{y}_{t-\frac12}; \tilde{\mathbf{x}}, \tilde{\mathbf{y}})]
+
\frac{\alpha_t\stepsize_t}{2} \left(
\tfrac{1}{1-\quantile}\stdFG^2	+	(2+\tfrac{1}{\beta})\stdBLin^2
\right)
.
\end{align*}
This yields, applying Young's inequality,
\allowdisplaybreaks
\begin{align*}
\lefteqn{
\Exs[\Vquantity(\mathbf{x}\avg{t-\frac12}, \mathbf{y}\avg{t-\frac12} \mid \tilde{\mathbf{x}}, \tilde{\mathbf{y}})]	-	(1-\alpha_t)\Exs[\Vquantity(\mathbf{x}\avg{t-\frac32}, \mathbf{y}\avg{t-\frac32} \mid \tilde{\mathbf{x}}, \tilde{\mathbf{y}})]
}
\\& \le
\frac{\alpha_t}{2\stepsize_t}\left(\Exs[\distancemetric(\mathbf{x}_{t-1}, \mathbf{y}_{t-1}; \tilde{\mathbf{x}}, \tilde{\mathbf{y}})]		-	\Exs[\distancemetric(\mathbf{x}_t, \mathbf{y}_t; \tilde{\mathbf{x}}, \tilde{\mathbf{y}})] \right)
\\&\quad\,
-
\frac{\alpha_t}{2\stepsize_t} \left( \quantile - \alpha_t\smoothFG\stepsize_t - (1+\beta)\smoothBLin^2\stepsize_t^2 \right)\Exs[\distancemetric(\mathbf{x}_{t-\frac12}, \mathbf{y}_{t-\frac12}; \mathbf{x}_{t-1}, \mathbf{y}_{t-1})]
\\&\quad\,
-
\alpha_t\mu_\star\Exs[\distancemetric(\mathbf{x}_{t-\frac12}, \mathbf{y}_{t-\frac12}; \tilde{\mathbf{x}}, \tilde{\mathbf{y}})]
+
\frac{\alpha_t\stepsize_t}{2} \left(
\tfrac{1}{1-\quantile}\stdFG^2	+	(2+\tfrac{1}{\beta})\stdBLin^2
\right)
\\& \le
\frac{\alpha_t}{2\stepsize_t}\left((1-\alpha_t)\Exs[\distancemetric(\mathbf{x}_{t-1}, \mathbf{y}_{t-1}; \tilde{\mathbf{x}}, \tilde{\mathbf{y}})]		-	\Exs[\distancemetric(\mathbf{x}_t, \mathbf{y}_t; \tilde{\mathbf{x}}, \tilde{\mathbf{y}})] \right)
\\&\quad\,
-
\frac{\alpha_t}{2\stepsize_t} \left( \quantile - \alpha_t\smoothFG\stepsize_t - (1+\beta)\smoothBLin^2\stepsize_t^2 \right)\Exs[\distancemetric(\mathbf{x}_{t-\frac12}, \mathbf{y}_{t-\frac12}; \mathbf{x}_{t-1}, \mathbf{y}_{t-1})]
\\&\quad\,
+
\frac{\alpha_t^2}{2\stepsize_t}\Exs[\distancemetric(\mathbf{x}_{t-1}, \mathbf{y}_{t-1}; \tilde{\mathbf{x}}, \tilde{\mathbf{y}})]
-
\alpha_t\mu_\star\Exs[\distancemetric(\mathbf{x}_{t-\frac12}, \mathbf{y}_{t-\frac12}; \tilde{\mathbf{x}}, \tilde{\mathbf{y}})]
+
\frac{\alpha_t\stepsize_t}{2} \left(
\tfrac{1}{1-\quantile}\stdFG^2	+	(2+\tfrac{1}{\beta})\stdBLin^2
\right)
\\& \le
\frac{\alpha_t}{2\stepsize_t}\left((1-\alpha_t)\Exs[\distancemetric(\mathbf{x}_{t-1}, \mathbf{y}_{t-1}; \tilde{\mathbf{x}}, \tilde{\mathbf{y}})]		-	\Exs[\distancemetric(\mathbf{x}_t, \mathbf{y}_t; \tilde{\mathbf{x}}, \tilde{\mathbf{y}})] \right)
\\&\quad\,
-
\frac{\alpha_t}{2\stepsize_t} \left( \quantile - \alpha_t\smoothFG\stepsize_t - (1+\beta)\smoothBLin^2\stepsize_t^2 \right)\Exs[\distancemetric(\mathbf{x}_{t-\frac12}, \mathbf{y}_{t-\frac12}; \mathbf{x}_{t-1}, \mathbf{y}_{t-1})]
\\&\quad\,
+
\stepsize_t\mu_\star^2\Exs[\distancemetric(\mathbf{x}_{t-\frac12}, \mathbf{y}_{t-\frac12}; \mathbf{x}_{t-1}, \mathbf{y}_{t-1})]
+
\frac{\alpha_t\stepsize_t}{2} \left(
\tfrac{1}{1-\quantile}\stdFG^2	+	(2+\tfrac{1}{\beta})\stdBLin^2
\right)
.
\end{align*}
Setting $\stepsize_t = \frac{\alpha_t}{\mu_\star}$ we have
\allowdisplaybreaks
\begin{align*}
\lefteqn{
\Exs[\Vquantity(\mathbf{x}\avg{t-\frac12}, \mathbf{y}\avg{t-\frac12} \mid \tilde{\mathbf{x}}, \tilde{\mathbf{y}})]	+	\frac{\mu_\star}{2}\Exs[\distancemetric(\mathbf{x}_t, \mathbf{y}_t; \tilde{\mathbf{x}}, \tilde{\mathbf{y}})]
}
\\&\quad\,
-
(1-\alpha_t)\left(\Exs[\Vquantity(\mathbf{x}\avg{t-\frac32}, \mathbf{y}\avg{t-\frac32} \mid \tilde{\mathbf{x}}, \tilde{\mathbf{y}})]	+	\frac{\mu_\star}{2}\Exs[\distancemetric(\mathbf{x}_{t-1}, \mathbf{y}_{t-1}; \tilde{\mathbf{x}}, \tilde{\mathbf{y}})]\right)
\\&\le
-
\frac{\mu_\star}{2}\left(
\quantile - 2\alpha_t - \left(\tfrac{\smoothFG}{\mu_\star} + \tfrac{(1+\beta)\smoothBLin^2}{\mu_\star^2}\right)\alpha_t^2
\right)\Exs[\distancemetric(\mathbf{x}_{t-\frac12}, \mathbf{y}_{t-\frac12}; \mathbf{x}_{t-1}, \mathbf{y}_{t-1})]
\\&\quad\,
+
\frac{\alpha_t^2}{2\mu_\star} \left(
\tfrac{1}{1-\quantile}\stdFG^2	+	(2+\tfrac{1}{\beta})\stdBLin^2
\right)
.
\end{align*}

\paragraph{Step 3.}
\blue{
By the definition $\alpha_t$ we have
$
\quantile - 2\alpha_t - \left(\tfrac{\smoothFG}{\mu_\star} + \tfrac{(1+\beta)\smoothBLin^2}{\mu_\star^2}\right)\alpha_t^2
\ge
0
$, so we obtain regularity condition $
\alpha_t
\le
\bar{\alpha}
=
\frac{\quantile}{1+\sqrt{1 + \quantile\left(\tfrac{\smoothFG}{\mu_\star} + \tfrac{(1+\beta)\smoothBLin^2}{\mu_\star^2}\right)}}
$ of Theorem \ref{theo_S_AG_EG_Str}.
}%
Since we assumed both $F$ and $G$ are nonstrongly convex and $H$ is a $\mu_\star$-strongly-convex-$\mu_\star$-strongly-concave isotropic quadratic, this implies
$$\begin{aligned}
\lefteqn{
\Exs[\Vquantity(\mathbf{x}\avg{t-\frac12}, \mathbf{y}\avg{t-\frac12} \mid \tilde{\mathbf{x}}, \tilde{\mathbf{y}})]		+	\frac{\mu_\star}{2}\Exs[\distancemetric(\mathbf{x}_t, \mathbf{y}_t; \tilde{\mathbf{x}}, \tilde{\mathbf{y}})]
}
\\&\le
(1-\alpha_t)\left(
\Exs[\Vquantity(\mathbf{x}\avg{t-\frac32}, \mathbf{y}\avg{t-\frac32} \mid \tilde{\mathbf{x}}, \tilde{\mathbf{y}})]	+	\frac{\mu_\star}{2}\Exs[\distancemetric(\mathbf{x}_{t-1}, \mathbf{y}_{t-1}; \tilde{\mathbf{x}}, \tilde{\mathbf{y}})]
\right)
+
\frac{3\alpha_t^2}{2\mu_\star} \sigma^2
.
\end{aligned}$$
Plugging in $\tilde{\mathbf{x}} = \oholder_{\mathbf{x}}^\star, \tilde{\mathbf{y}} = \oholder_{\mathbf{y}}^\star$ gives
$$\begin{aligned}
&
\Exs[\Vquantity(\tilde{\mathbf{x}}, \tilde{\mathbf{y}} \mid \oholder_{\mathbf{x}}^\star, \oholder_{\mathbf{y}}^\star)]
\\&=
F(\tilde{\mathbf{x}}) + G(\tilde{\mathbf{y}}) - F(\oholder_{\mathbf{x}}^\star) - G(\oholder_{\mathbf{y}}^\star)
+
\langle \nabla_{\mathbf{x}} H(\oholder_{\mathbf{x}}^\star, \oholder_{\mathbf{y}}^\star),		\tilde{\mathbf{x}} - \oholder_{\mathbf{x}}^\star \rangle
+
\langle -\nabla_{\mathbf{y}} H(\oholder_{\mathbf{x}}^\star, \oholder_{\mathbf{y}}^\star),	\tilde{\mathbf{y}} - \oholder_{\mathbf{y}}^\star \rangle
\\&\ge
\langle\nabla F(\oholder_{\mathbf{x}}^\star) + \nabla_{\mathbf{x}} H(\oholder_{\mathbf{x}}^\star, \oholder_{\mathbf{y}}^\star),		\tilde{\mathbf{x}} - \oholder_{\mathbf{x}}^\star \rangle
+
\langle\nabla G(\oholder_{\mathbf{y}}^\star) - \nabla_{\mathbf{y}} H(\oholder_{\mathbf{x}}^\star, \oholder_{\mathbf{y}}^\star),	\tilde{\mathbf{y}} - \oholder_{\mathbf{y}}^\star \rangle
=
0
,
\end{aligned}$$
and also
$$\begin{aligned}
&
\Exs[\Vquantity(\tilde{\mathbf{x}}, \tilde{\mathbf{y}} \mid \oholder_{\mathbf{x}}^\star, \oholder_{\mathbf{y}}^\star)]
\\&\le
\langle\nabla F(\oholder_{\mathbf{x}}^\star) + \nabla_{\mathbf{x}} H(\oholder_{\mathbf{x}}^\star, \oholder_{\mathbf{y}}^\star),		\tilde{\mathbf{x}} - \oholder_{\mathbf{x}}^\star \rangle
+
\langle\nabla G(\oholder_{\mathbf{y}}^\star) - \nabla_{\mathbf{y}} H(\oholder_{\mathbf{x}}^\star, \oholder_{\mathbf{y}}^\star),	\tilde{\mathbf{y}} - \oholder_{\mathbf{y}}^\star \rangle
+
\tfrac{\smoothFG}{2} \distancemetric(\tilde{\mathbf{x}}, \tilde{\mathbf{y}}; \oholder_{\mathbf{x}}^\star, \oholder_{\mathbf{y}}^\star)
\\&=
\tfrac{\smoothFG}{2} \distancemetric(\tilde{\mathbf{x}}, \tilde{\mathbf{y}}; \oholder_{\mathbf{x}}^\star, \oholder_{\mathbf{y}}^\star)
,
\end{aligned}$$
so (by the fact that $\mathbf{x}\avg{-\frac12} = \mathbf{x}_0$ and $\mathbf{y}\avg{-\frac12} = \mathbf{y}_0$)
$$\begin{aligned}
\lefteqn{
\frac{\mu_\star}{2}\Exs[\distancemetric(\mathbf{x}_t, \mathbf{y}_t; \oholder_{\mathbf{x}}^\star, \oholder_{\mathbf{y}}^\star)]
\le
\Exs[\Vquantity(\mathbf{x}\avg{t-\frac12}, \mathbf{y}\avg{t-\frac12} \mid \oholder_{\mathbf{x}}^\star, \oholder_{\mathbf{y}}^\star)	+	\frac{\mu_\star}{2}\Exs[\distancemetric(\mathbf{x}_t, \mathbf{y}_t; \oholder_{\mathbf{x}}^\star, \oholder_{\mathbf{y}}^\star)]
}
\\&\le
\left(
\Vquantity(\mathbf{x}\avg{-\frac12}, \mathbf{y}\avg{-\frac12} \mid \oholder_{\mathbf{x}}^\star, \oholder_{\mathbf{y}}^\star) 	+	\frac{\mu_\star}{2} \distancemetric(\mathbf{x}_0, \mathbf{y}_0; \oholder_{\mathbf{x}}^\star, \oholder_{\mathbf{y}}^\star)
\right)\prod_{\tau=1}^t (1-\alpha_\tau)
+
\sum_{\tau=1}^t \frac{3\alpha_\tau^2}{2\mu_\star}\left[\prod_{\tau’=\tau+1}^t (1-\alpha_{\tau’})\right] \sigma^2
\\&\le
\distancemetric(\mathbf{x}_0, \mathbf{y}_0; \oholder_{\mathbf{x}}^\star, \oholder_{\mathbf{y}}^\star)\frac{\smoothFG+\mu_\star}{2}\prod_{\tau=1}^t (1-\alpha_\tau)
+
\frac{3\sigma^2}{2\mu_\star}\sum_{\tau=1}^t \alpha_\tau^2\prod_{\tau’=\tau+1}^t (1-\alpha_{\tau’})
.
\end{aligned}$$
Dividing both sides by $\frac{\mu_\star}{2}$ gives \eqref{S_AG_EG_Str} and our theorem.

\pb\subsection{Proof of Theorem \ref{theo_S_AG_EG_Bilinear}}\label{sec_proof,theo_S_AG_EG_Bilinear}
Before the proof we first adopt the scaling reduction argument as in \S\ref{sec_scalingreduction}, to argue that we only need to prove the result for the case of bilinear games centered at zero, i.e.~$
F(\mathbf{x}) = 0 = G(\mathbf{y})
$ where from \eqref{params} we have $
\smoothFG = \strcvx = \mu_F = 0
$.
We set the iteration symbol $\mathbf{z} \equiv
\begin{bmatrix}
\hat{\mathbf{x}}
\\
\hat{\mathbf{y}}
\end{bmatrix}
= 
\begin{bmatrix}
\mathbf{x} - \oholder_{\mathbf{x}}^\star
\\ 
\mathbf{y} - \oholder_{\mathbf{y}}^\star
\end{bmatrix}
$ and also $
\hat{\mathscr{F}}(\hat{\mathbf{x}},\hat{\mathbf{y}})
=
\hat{\mathbf{x}}^\top \mathbf{B}\hat{\mathbf{y}}
$, with $
\hat{\mathscr{F}}(\hat{\mathbf{x}},\hat{\mathbf{y}})
$ being equal to $\mathscr{F}(\mathbf{x},\mathbf{y})$ defined as in \eqref{BilinearF_stochastic} up to an additive constant.
Our scaling-reduction argument hence applies.

\begin{proof}[Proof of Theorem \ref{theo_S_AG_EG_Bilinear}]
From the update rule we have
\begin{subequations}\begin{align}
\mathbf{z}_{t-\frac12}
&=
\mathbf{z}_{t-1} - \stepsize\mathbf{J}\mathbf{z}_{t-1}
+ \eta \boldsymbol{\varepsilon}_{t-\frac12}
,
\label{eq_update_ibbi_a}
\\
\mathbf{z}\avg{t-\frac12}
&= 
\tfrac{t-1}{t+1}\mathbf{z}\avg{t-\frac32} + \tfrac{2}{t+1}\mathbf{z}_{t-\frac12}
,
\label{eq_update_ibbi_b}
\\
\mathbf{z}_t
&=
\mathbf{z}_{t-1} - \stepsize\mathbf{J}\mathbf{z}_{t-\frac12}
+ \eta \boldsymbol{\varepsilon}_t
.
\label{eq_update_ibbi_c}
\end{align}
\end{subequations}
Note the $[\mathbf{x}\Xtrap{t}; \mathbf{y}\Xtrap{t}]$ sequence becomes irrelevant in this update
\,\,
   skew-symmetric with $\mathbf{J}^\top = -\mathbf{J}$, so $\mathbf{J}^2 = -\mathbf{J}^\top\mathbf{J}$ is symmetric and negative semidefinite.
We proceed with the proof in steps:

\paragraph{Step 1.}
We target to show the last-iterate bound
\begin{equation}\label{last_iterate}
\Exs\|\mathbf{z}_t\|^2 
\le
\Exs\|\mathbf{z}_0\|^2 
+ 
2t\eta^2\stdBLin^2
\end{equation}
Note \eqref{eq_update_ibbi_a} and~\eqref{eq_update_ibbi_c} together gives
\begin{align}\label{eq_update_ibbi_combined}
\mathbf{z}_t
&=
\left(
\mathbf{I} - \stepsize \mathbf{J}
+ 
\stepsize^2 \mathbf{J}^2
\right) \mathbf{z}_{t-1}
- \eta^2 \mathbf{J} \boldsymbol{\varepsilon}_{t-\frac12}
+ \eta \boldsymbol{\varepsilon}_t
\end{align}
Taking squared norm on both sides of \eqref{eq_update_ibbi_combined}, we have when $
\stepsize	\le	\frac{1}{\sqrt{\lambda_{\max}(\mathbf{B}^\top \mathbf{B})}}
$, $\mathbf{z}_t$ does not expand in Euclidean norm (noiseless), so
\beq\label{ztnoexpand}
\begin{aligned}
&
\Exs\|\mathbf{z}_t\|^2 
=
\Exs\left[
(\mathbf{z}_{t-1})^\top\left(
\mathbf{I} + \stepsize^2 \mathbf{J}^2
+
\stepsize^4 \mathbf{J}^4
\right) \mathbf{z}_{t-1}
\right]
+
\Exs\left\|
- \eta^2 \mathbf{J} \boldsymbol{\varepsilon}_{t-\frac12}
+ \eta \boldsymbol{\varepsilon}_t
\right\|^2
\\&\le
\Exs\|\mathbf{z}_{t-1}\|^2 
+
\Exs\left\|\eta^2 \mathbf{J} \boldsymbol{\varepsilon}_{t-\frac12}\right\|^2
+ 
\Exs\left\|\eta \boldsymbol{\varepsilon}_t\right\|^2
\le
\Exs\|\mathbf{z}_{t-1}\|^2 
+ 
\eta^2\left(1+\eta^2\lambda_{\max}(\mathbf{B}^\top\mathbf{B})\right)\stdBLin^2
\le
\Exs\|\mathbf{z}_{t-1}\|^2 
+ 
2\eta^2\stdBLin^2
.
\end{aligned}\eeq
Recursively applying the above concludes \eqref{last_iterate}.

\paragraph{Step 2.}
We start from the update rule \eqref{eq_update_ibbi_b} which implies $
(t+1)t\mathbf{z}\avg{t-\frac12}
= 
t(t-1)\mathbf{z}\avg{t-\frac32} + 2t\mathbf{z}_{t-\frac12}
$ holds for $t=1,\dots,\Tholder$, so
$$
(\Tholder+1)\Tholder\mathbf{z}\avg{\Tholder-\frac12} = 2\sum_{t=1}^\Tholder t\mathbf{z}_{t-\frac12}
\quad\Rightarrow\quad
\mathbf{z}\avg{\Tholder-\frac12} = \frac{2}{(\Tholder+1)\Tholder}\sum_{t=1}^\Tholder t\mathbf{z}_{t-\frac12}
.
$$
Using this to analyze our algorithm:
$$
t \mathbf{z}_t - (t-1) \mathbf{z}_{t-1} - \mathbf{z}_{t-1}	
=	
t (\mathbf{z}_t - \mathbf{z}_{t-1})	
=	
-\eta \mathbf{J}\left[ t \mathbf{z}_{t-\frac12} \right]
+ \eta t\boldsymbol{\varepsilon}_t
,
$$
so telescoping gives
$$
\Tholder\mathbf{z}_\Tholder - \sum_{t=1}^\Tholder \mathbf{z}_{t-1}	
=	
-\eta \mathbf{J}\sum_{t=1}^\Tholder t \mathbf{z}_{t-\frac12}
+ \eta \sum_{t=1}^\Tholder t\boldsymbol{\varepsilon}_t
,
$$
which yields
\beq\label{last_avg_relation}
\mathbf{z}\avg{\Tholder-\frac12}
=
\frac{2}{(\Tholder+1)\Tholder}\sum_{t=1}^\Tholder t \mathbf{z}_{t-\frac12}
=
\frac{2}{-\eta (\Tholder+1)\Tholder} \mathbf{J}^{-1}\left(
\Tholder\mathbf{z}_\Tholder - \sum_{t=1}^\Tholder \mathbf{z}_{t-1}
- 
\eta \sum_{t=1}^\Tholder t\boldsymbol{\varepsilon}_t
\right)
.
\eeq
Obviously the least singular value of the matrix $
\mathbf{J}
$ can be lower-bounded as $
\sigma_{\min}(\mathbf{J})
\ge
\sqrt{\lambda_{\min}(\mathbf{B}\mathbf{B}^\top)}
$.
We conclude from \eqref{last_avg_relation} along with Young’s inequality that
$$\begin{aligned}
&
\lambda_{\min}(\mathbf{B}\mathbf{B}^\top)\Exs\left\|\mathbf{z}\avg{\Tholder-\frac12}\right\|^2
\le
\Exs\left\|\mathbf{J}\mathbf{z}\avg{\Tholder-\frac12}\right\|^2
\\&=
(1+\gamma)\frac{4}{\eta^2 (\Tholder+1)^2\Tholder^2}
\Exs\left\|\sum_{t=1}^\Tholder \left(\mathbf{z}_\Tholder - \mathbf{z}_{t-1}\right)\right\|^2
+
(1+\tfrac{1}{\gamma})\frac{4}{\eta^2 (\Tholder+1)^2\Tholder^2}\Exs\left\|\eta \sum_{t=1}^\Tholder t\boldsymbol{\varepsilon}_t\right\|^2
\\&\equiv
(1+\gamma)\mbox{I} 
+ 
(1+\tfrac{1}{\gamma})\mbox{II}
,
\end{aligned}$$
where applying the last-iterate bound \eqref{last_iterate} together with some elementary estimates leads to
$$\begin{aligned}
\mbox{I}
&\le
\frac{4}{\eta^2 (\Tholder+1)^2\Tholder^2}
\cdot
\Tholder\sum_{t=1}^\Tholder \left[
2\Exs\left\|\mathbf{z}_\Tholder\right\|^2 + 2\Exs\left\|\mathbf{z}_{t-1}\right\|^2
\right]
\\&\le
\frac{4}{\eta^2 (\Tholder+1)^2\Tholder^2}
\cdot
\Tholder\sum_{t=1}^\Tholder \left[
4\Exs\|\mathbf{z}_0\|^2 + 4(\Tholder+t-1)\eta^2\stdBLin^2
\right]
\\&\le
\frac{16\Exs\|\mathbf{z}_0\|^2 + 24\eta^2\stdBLin^2\Tholder}{\eta^2 (\Tholder+1)^2}
\le
\frac{16\lambda_{\max}(\mathbf{B}^\top \mathbf{B})\Exs\|\mathbf{z}_0\|^2}{(\Tholder+1)^2}
+
\frac{24\stdBLin^2}{\Tholder+1}
,
\end{aligned}$$
and, using the property of square-integrable martingales,
$$\begin{aligned}
\mbox{II}
&\le
\frac{4}{\eta^2 (\Tholder+1)^2\Tholder^2}\Exs\left\|\eta \sum_{t=1}^\Tholder t\boldsymbol{\varepsilon}_t\right\|^2
=
\frac{4}{\eta^2 (\Tholder+1)^2\Tholder^2}
\cdot
\eta^2 \sum_{t=1}^\Tholder t^2\Exs\left\|\boldsymbol{\varepsilon}_t\right\|^2
\\&\le
\frac{4\stdBLin^2}{\eta^2 (\Tholder+1)^2\Tholder^2}
\cdot
\eta^2 \frac{\Tholder(\Tholder+\tfrac12)(\Tholder+1)}{3}
\le
\frac{4\stdBLin^2}{3\Tholder}
.
\end{aligned}$$
To summarize we have for arbitrary $\gamma\in (0,\infty)$
$$\begin{aligned}
\lambda_{\min}(\mathbf{B}\mathbf{B}^\top)\Exs\left\|\mathbf{z}\avg{\Tholder-\frac12}\right\|^2
\le
(1+\gamma)\left(
\frac{16\lambda_{\max}(\mathbf{B}^\top \mathbf{B})\Exs\|\mathbf{z}_0\|^2}{(\Tholder+1)^2}
+
\frac{24\stdBLin^2}{\Tholder+1}
\right)
+ 
(1+\tfrac{1}{\gamma})\frac{4\stdBLin^2}{3\Tholder}
.
\end{aligned}$$
Optimizing $\gamma$ gives along with $\sqrt{a+b}\le \sqrt{a}+\sqrt{b}$ for nonnegatives $a$ and $b$:
$$\begin{aligned}
&
\sqrt{\lambda_{\min}(\mathbf{B}\mathbf{B}^\top)}
\sqrt{\Exs\left\|\mathbf{z}\avg{\Tholder-\frac12}\right\|^2}
\le
\sqrt{
\frac{16\lambda_{\max}(\mathbf{B}^\top \mathbf{B})\Exs\|\mathbf{z}_0\|^2}{(\Tholder+1)^2}
+
\frac{24\stdBLin^2}{\Tholder+1}
}
+ 
\sqrt{\frac{4\stdBLin^2}{3\Tholder}}
\\&\le
\sqrt{
\frac{16\lambda_{\max}(\mathbf{B}^\top \mathbf{B})\Exs\|\mathbf{z}_0\|^2}{(\Tholder+1)^2}
}
+
\sqrt{\frac{24\stdBLin^2}{\Tholder+1}}
+ 
\sqrt{\frac{4\stdBLin^2}{3\Tholder}}
.
\end{aligned}$$
Dividing both sides by $
\sqrt{\lambda_{\min}(\mathbf{B}\mathbf{B}^\top)}
$ and taking squares conclude \eqref{AG_EG_Bilinear} and hence the theorem.
\end{proof}

\pb\section{Proof of auxiliary lemmas}\label{sec_proofaux}
\subsection{Proof of Lemma \ref{lemm_PRecursion}}\label{sec_proof,lemm_PRecursion}
The analysis in this subsection is partially motivated by Lemma 2 of \citet{chen2017accelerated}.
\begin{proof}[Proof of Lemma \ref{lemm_PRecursion}]
By definition of $\boneholder, \btwoholder$, we have for any $\arbholder\in \reals^d$
\begin{align}
\label{eqnMP2u}
\langle \boneholder,	\foneholder - \arbholder\rangle 
&= 
\frac{1}{2} \left[
\|\thetaholder - \arbholder\|^2 - \|\thetaholder - \foneholder\|^2 - \|\foneholder - \arbholder\|^2
\right]
,
\\
\label{eqnMP1}
\langle \btwoholder,	\ftwoholder - \arbholder \rangle 
&=
\frac{1}{2} \left[
\|\thetaholder - \arbholder\|^2 - \|\thetaholder - \ftwoholder\|^2 - \|\ftwoholder - \arbholder\|^2
\right]
.
\end{align}
Specifically, letting $\arbholder = \ftwoholder$ in \eqref{eqnMP2u} we have
\begin{align}\label{eqnMP2}
\langle \boneholder,	\foneholder - \ftwoholder \rangle 
&= 
\frac{1}{2} \left[
\|\thetaholder - \ftwoholder\|^2 - \|\thetaholder - \foneholder\|^2 - \|\foneholder - \ftwoholder\|^2
\right]
.
\end{align}
Now, combining inequalities (\ref{eqnMP1}) and (\ref{eqnMP2}) we have
$$
\langle \btwoholder,	\ftwoholder - \arbholder \rangle 
+ 
\langle \boneholder,	\foneholder - \ftwoholder \rangle 
\le
\frac{1}{2} \left[
\|\thetaholder - \arbholder\|^2 - \|\ftwoholder - \arbholder\|^2- \|\thetaholder - \foneholder\|^2 - \|\foneholder - \ftwoholder\|^2
\right]
,
$$
which in turn gives 
$$\begin{aligned}
\langle \btwoholder,	\foneholder - \arbholder \rangle 
\le
\langle \btwoholder - \boneholder,	\foneholder - \ftwoholder \rangle 
+ 
\frac{1}{2} \left[
\|\thetaholder - \arbholder\|^2 - \|\ftwoholder - \arbholder\|^2 - \|\thetaholder - \foneholder\|^2 - \|\foneholder - \ftwoholder\|^2
\right]
.
\end{aligned}$$
An application of the Young and Cauchy-Schwartz inequalities gives
\beq\label{tmp1}
\begin{aligned}
\langle\btwoholder,	\foneholder - \arbholder \rangle
&\le
\|\btwoholder - \boneholder\| \|\foneholder - \ftwoholder\| 
+ 
\frac{1}{2} \left[
\|\thetaholder - \arbholder\|^2 - \|\ftwoholder - \arbholder\|^2- \|\thetaholder - \foneholder\|^2 - \|\foneholder - \ftwoholder\|^2
\right]
\\&\le
\frac{1}{2}\|\btwoholder - \boneholder\|^2 + \frac{1}{2}\|\foneholder - \ftwoholder\|^2 + \frac{1}{2} \left[
\|\thetaholder - \arbholder\|^2 - \|\ftwoholder - \arbholder\|^2- \|\thetaholder - \foneholder\|^2 - \|\foneholder - \ftwoholder\|^2
\right]
\\&=
\frac{1}{2}\|\btwoholder - \boneholder\|^2 + \frac{1}{2} \left[
\|\thetaholder - \arbholder\|^2 - \|\ftwoholder - \arbholder\|^2 - \|\thetaholder - \foneholder\|^2
\right]
.
\end{aligned}\eeq
This establishes \eqref{eqnPRecursion} and hence Lemma \ref{lemm_PRecursion}.
\end{proof}

\pb\subsection{Proof of Lemma \ref{lemm_QuanBdd}}\label{sec_proof,lemm_QuanBdd}
\begin{proof}[Proof of Lemma \ref{lemm_QuanBdd}]
It is straightforward to verify that $F(\mathbf{x})$ and $G(\mathbf{y})$ are $\smoothFG$-smooth and $\strcvx$-strongly convex.
For the rest of this proof, we observe that the saddle definition of $\oholder_{\mathbf{x}}^\star, \oholder_{\mathbf{y}}^\star$ satisfies the first-order stationary condition for problem \eqref{problemopt_stochastic}:
\beq\label{FOSC}
\nabla_{\mathbf{x}} \mathscr{F}(\oholder_{\mathbf{x}}^\star, \oholder_{\mathbf{y}}^\star)
=
\nabla F(\oholder_{\mathbf{x}}^\star) 
+ \nabla_{\mathbf{x}} H(\oholder_{\mathbf{x}}^\star, \oholder_{\mathbf{y}}^\star)
=
0
,\qquad
\nabla_{\mathbf{y}} \mathscr{F}(\oholder_{\mathbf{x}}^\star, \oholder_{\mathbf{y}}^\star)
=
\nabla_{\mathbf{y}} H(\oholder_{\mathbf{x}}^\star, \oholder_{\mathbf{y}}^\star)
- \nabla G(\oholder_{\mathbf{y}}^\star)
=
0
.
\eeq
Since both $f(\mathbf{x})$ and $g(\mathbf{y})$ are $\strcvx$-strongly convex, we have
$$\begin{aligned}
&
F(\mathbf{x}) - F(\oholder_{\mathbf{x}}^\star) 
+ \left\langle \nabla_{\mathbf{x}} H(\oholder_{\mathbf{x}}^\star, \oholder_{\mathbf{y}}^\star), \mathbf{x} - \oholder_{\mathbf{x}}^\star \right\rangle 
\ge
\left\langle
\nabla F(\oholder_{\mathbf{x}}^\star), \mathbf{x} - \oholder_{\mathbf{x}}^\star
\right\rangle
+
\frac{\strcvx}{2}\left\| \mathbf{x} - \oholder_{\mathbf{x}}^\star\right\|^2
+
\left\langle
\nabla_{\mathbf{x}} H(\oholder_{\mathbf{x}}^\star, \oholder_{\mathbf{y}}^\star), \mathbf{x} - \oholder_{\mathbf{x}}^\star
\right\rangle
\\&=
\left\langle
\nabla F(\oholder_{\mathbf{x}}^\star) + \nabla_{\mathbf{x}} H(\oholder_{\mathbf{x}}^\star, \oholder_{\mathbf{y}}^\star), \mathbf{x} - \oholder_{\mathbf{x}}^\star
\right\rangle
+
\frac{\strcvx}{2}\left\| \mathbf{x} - \oholder_{\mathbf{x}}^\star\right\|^2
=
\frac{\strcvx}{2}\left\| \mathbf{x} - \oholder_{\mathbf{x}}^\star\right\|^2
,
\end{aligned}$$
and
$$\begin{aligned}
&
G(\mathbf{y}) - G(\oholder_{\mathbf{y}}^\star) 
-
\left\langle \nabla_{\mathbf{y}} H(\oholder_{\mathbf{x}}^\star, \oholder_{\mathbf{y}}^\star), \mathbf{y} - \oholder_{\mathbf{y}}^\star \right\rangle
\ge
\left\langle
\nabla G(\oholder_{\mathbf{y}}^\star), \mathbf{y} - \oholder_{\mathbf{y}}^\star
\right\rangle
+
\frac{\strcvx}{2} \left\|\mathbf{y} - \oholder_{\mathbf{y}}^\star\right\|^2
-
\left\langle
\nabla_{\mathbf{y}} H(\oholder_{\mathbf{x}}^\star, \oholder_{\mathbf{y}}^\star), \mathbf{y} - \oholder_{\mathbf{y}}^\star
\right\rangle
\\&=
-\left\langle
\nabla_{\mathbf{y}} H(\oholder_{\mathbf{x}}^\star, \oholder_{\mathbf{y}}^\star) - \nabla G(\oholder_{\mathbf{y}}^\star), \mathbf{y} - \oholder_{\mathbf{y}}^\star
\right\rangle
+
\frac{\strcvx}{2} \left\|\mathbf{y} - \oholder_{\mathbf{y}}^\star\right\|^2
=
\frac{\strcvx}{2} \left\|\mathbf{y} - \oholder_{\mathbf{y}}^\star\right\|^2
,
\end{aligned}$$
where in both of the two displays, the inequality holds due to the $\strcvx$-strong convexity of $F$ and $G$, and the equality holds due to the first-order stationary condition \eqref{FOSC}.
This completes the proof.
\end{proof}

\pb\subsection{Proof of Lemma \ref{lemm_ssproperties}}\label{sec_proof,lemm_ssproperties}
\begin{proof}[Proof of Lemma \ref{lemm_ssproperties}]
Items (i)---(iii) are straightforward.
For the proof of \eqref{stepsize_cond} in item (iv), we note that $
\stepsize_t
=
\bar{\stepsize}_t(\sigma;\Tholder,\Cholder,\quantile,\beta)
\le
\frac{t}{\frac{2}{\quantile}\smoothFG + \sqrt{\tfrac{1+\beta}{\quantile}}\smoothBLin t}
\le
\frac{1}{\sqrt{\tfrac{1+\beta}{\quantile}}\smoothBLin}
$ which gives
$$
\quantile - \frac{2\smoothFG}{t+1} \stepsize_t - (1+\beta)\smoothBLin^2 \stepsize_t^2
\ge
\frac{\quantile}{t}\left(t - \left(\frac{2}{\quantile}\smoothFG + \sqrt{\tfrac{1+\beta}{\quantile}}\smoothBLin t\right)\stepsize_t\right)
\ge
0
,
$$
and hence completes the proof.
\end{proof}

\end{document}